\newtheorem{theorem}{Theorem}[section]
\newtheorem{lemma}{Lemma}[section]
\newtheorem{proposition}{Proposition}[section]
\newtheorem{corollary}{Corollary}[section]
\numberwithin{equation}{section}
\newcommand{\ep}{\varepsilon}
\newcommand{\epi}{\varepsilon_{\iota}}
\newcommand{\Om}{\Omega}
\newcommand{\bW}{\boldsymbol{W}}
\newcommand{\FF}{\mathbb{F}}
\newcommand{\SO}{\mathrm{SO}}
\newcommand{\PSO}{\mathrm{PSO}}
\newcommand{\PSp}{\mathrm{PSp}}
\newcommand{\POm}{\mathrm{P}\Omega}
\newcommand{\Sp}{\mathrm{Sp}}
\newcommand{\SL}{\mathrm{SL}}
\newcommand{\SU}{\mathrm{SU}}
\newcommand{\gO}{\mathrm{O}}
\newcommand{\Irr}{\mathrm{Irr}}
\def\adots{\mathinner{\mkern2mu\raise0pt\hbox{.}  % antidiagonal dots
\mkern2mu\raise4pt\hbox{.}\mkern1mu
\raise7pt\vbox{\kern7pt\hbox{.}}\mkern1mu}}
\begin{document}

\bibliographystyle{amsplain}

\title{On involutions and indicators of finite orthogonal groups}
\author{Gregory K. Taylor and C. Ryan Vinroot}
\date{}

\maketitle

\begin{abstract}
We study the numbers of involutions and their relation to Frobenius-Schur indicators in the groups $\SO^{\pm}(n,q)$ and $\Om^{\pm}(n,q)$.  Our point of view for this study comes from two motivations.  The first is the conjecture that a finite simple group $G$ is strongly real (all elements are conjugate to their inverses by an involution) if and only if it is totally orthogonal (all Frobenius-Schur indicators are 1), and we are able to show this holds for all finite simple groups $G$ other than the groups $\Sp(2n,q)$ with $q$ even or $\Om^{\pm}(4m,q)$ with $q$ even.  We prove computationally that for small $n$ and $m$ this statement indeed holds for these groups by equating their character degree sums to the number of involutions.  We also prove a result on a certain twisted indicator for the groups $\SO^{\pm}(4m+2,q)$ with $q$ odd.   Our second motivation is to continue the work of Fulman, Guralnick, and Stanton on generating function and asymptotics for involutions in classical groups.  We extend their work by finding generating functions for the numbers of involutions in $\SO^{\pm}(n,q)$ and $\Om^{\pm}(n,q)$ for all $q$, and we use these to compute the asymptotic behavior for the number of involutions in these groups when $q$ is fixed and $n$ grows.
\\
\\
\noindent 2010 {\it AMS Subject Classification}: 20G40, 20C33, 05A15, 05A16  
\\
\\
{\it Key words and phrases: } Orthogonal groups over finite fields, Frobenius-Schur indicators, involutions, generating functions, asymptotic enumeration
\end{abstract}

\section{Introduction}

An element $g$ in a group $G$ is \emph{real} in $G$ if $g$ is conjugate to its inverse in $G$.  In this case, all elements in the $G$-conjugacy class of $g$ are also real, and the class is called a \emph{real class}.  If $g$ is $G$-conjugate to its inverse by an element $h \in G$ which satisfies $h^2 = 1$, then we say that the element $g$ is \emph{strongly real} in $G$, and the conjugacy class of $g$ in $G$ is called a \emph{strongly real class}.    If all classes of $G$ are real (or strongly real), then $G$ is said to be a \emph{real group} (or a \emph{strongly real group}).  If $G$ is a finite group, then the number of real classes of $G$ is equal to the number of complex irreducible characters of $G$ which are real-valued.  If $\chi$ is a real-valued irreducible character of $G$, then the complex irreducible representation $(\pi, V)$ which affords $\chi$ may or may not be realizable as a representation over the real numbers.  The \emph{Frobenius-Schur indicator} of $\chi$, denoted $\ep(\chi)$, is defined as $\ep(\chi) = 1$ if $(\pi, V)$ is realizable as a real representation, $\ep(\chi) = -1$ if $(\pi, V)$ is not realizable as a real representation but $\chi$ is real-valued, and $\ep(\chi)=0$ if $\chi$ is not real-valued.  The group $G$ is said to be \emph{totally orthogonal} if $\ep(\chi)=1$ for all complex irreducible characters $\chi$ of $G$.  

Brauer \cite[Problem 14]{Br63} asked for a group-theoretical interpretation for the number of complex irreducible $\chi$ of $G$ which satisfy $\ep(\chi)=1$.  Because of many important classes of examples, it is suspected that an answer to Brauer's question should involve the strongly real classes of $G$.  In particular, it has been stated as a conjecture in \cite{KaKu15} (attributed to P. H. Tiep) that if $G$ is a finite simple group, then $G$ is strongly real if and only if $G$ is totally orthogonal.  It is known that this does not hold for finite groups in general; indeed, Kaur and Kulshretha \cite{KaKu15} have exhibited families of $2$-groups groups that are strongly real but not totally orthogonal, and families which are totally orthogonal but not strongly real.

After establishing some preliminary notation and results in Section \ref{Pre}, we consider in Section \ref{ReSimp} the conjecture that a finite simple group is strongly real if and only if it is totally orthogonal.  Except for the cases that $G$ is of the form $\Omega^{\pm}(4m, q)$ with $q$ even or $\Sp(2n,q)$ with $q$ even, we observe that this conjecture holds for all other finite simple groups $G$ by applying the classification of strongly real simple groups and known results on Frobenius-Schur indicators, in Theorem \ref{simple}.  In Section \ref{NumInvols}, we enumerate various sets of involutions in the finite orthogonal groups and certain subgroups, which is directly related to the above conjecture since a finite group $G$ is totally orthogonal if and only if its character degree sum is equal to the number of involutions in $G$ (see Lemma \ref{Allepi1}).  We immediately apply these counts in Section \ref{ResIndicators}, where we use data obtained by L\"{u}beck \cite{LuWWW} to show computationally that $\Omega^{\pm}(4m,q)$ and $\Sp(2n,q)$ with $q$ even are indeed totally orthogonal for small rank, and that certain twisted indicators of $\Omega^{\pm}(4m+2,q)$ are all $1$ for small rank, in Theorems \ref{Spqeven} and \ref{OOmqeven}.  We show the analogous twisted indicators are always $1$ for $\SO^{\pm}(4m+2,q)$ when $q$ is odd in Theorem \ref{SOtwist}.

In Sections \ref{GenFuns} and \ref{Asymptotics} we extend results of Fulman, Guralnick, and Stanton \cite{FuGuSt16} on the asymptotics for the numbers of involutions in finite classical groups.  Specifically, in Section \ref{GenFuns} we find generating functions for the numbers of involutions in $\SO^{\pm}(n,q)$ and $\Om^{\pm}(n,q)$ (for $q$ odd and even), and in Section \ref{Asymptotics} we compute asymptotics for the number of involutions in these groups (as $n$ grows and $q$ is fixed), as is suggested could be done in the introduction of \cite{FuGuSt16}.  Several interesting results emerge, for example in Corollary \ref{onehalf} we see that as $n$ grows $\Om^{\pm}(n,q)$ asymptotically has half the number of involutions as $\SO^{\pm}(n,q)$, when $q$ is odd and fixed.  The generating functions in Section \ref{GenFuns} are also significant in the problem of computing Frobenius-Schur indicators for the groups of interest, as several of them are also generating functions for character degree sums.  These should be useful in extending techniques from \cite{FuVi14} to other groups. \\
\\
{\bf Acknowledgements. }  Vinroot was supported in part by a grant from the Simons Foundation, Award \#280496. 

\section{Preliminaries} \label{Pre}

\subsection{The finite orthogonal groups}

We primarily study various types of orthogonal groups defined over finite fields, which we now discuss.  Considerations of other groups are isolated to Section \ref{ReSimp}.  

Let $\FF_q$ denote a finite field with $q$ elements, and let $V$ be a vector space over $\FF_q$ of dimension $N$ where $N \in \{ 2n, 2n+1 \}$.  We first consider the case that $q$ is odd.  Let $Q$ be a non-degenerate quadratic form defined on $V$ (that is, the symmetric form on $V$ defined by $Q$ is non-degenerate).  In this case, $Q$ must be equivalent to one of four possible forms.  We label these as follows, where the form is given by the image of a vector in $V$ with some coordinates $(x_1, \ldots, x_N)$:
\[
\begin{array}{lcc}
(\mathbf{0})  &  ~   & x_1x_2 + x_3x_4 + \cdots + x_{2n-1}x_{2n} \\
(\mathbf{w})  &  ~   & x_1x_2 + x_3x_4 + \cdots + x_{2n-3}x_{2n-2} + x_{2n-1}^2 - \delta x_{2n}^2 \\
(\mathbf{1})  &  ~   & x_1x_2 + x_3x_4 + \cdots + x_{2n-1}x_{2n} + x_{2n+1}^2 \\
(\mathbf{d})  &  ~   & x_1x_2 + x_3x_4 + \cdots + x_{2n-1}x_{2n} + \delta x_{2n+1}^2,
\end{array}
\]
where $\delta$ is some fixed element in $\FF_q^{\times} \setminus \FF_q^{\times 2}$.  We define $\bW = \{ \mathbf{0}, \mathbf{w}, \mathbf{1}, \mathbf{d} \}$, and the {\it Witt type} of $Q$ is defined to be the symbol in $\bW$ corresponding to the quadratic form in the list above to which $Q$ is equivalent.  We may define the orthogonal group on $V$ corresponding to $Q$, denoted by $\gO_{Q}(V)$, and since the isomorphism type depends only on the Witt type, we may also write this as $\gO_{Q}(V) = \gO^{(\mathbf{s})}(N,q)$, where $\mathbf{s} \in \bW$ is the Witt type of $Q$.  When $N = 2n$ is even, we will denote $\gO^{(\mathbf{0})}(N,q) = \gO^+(2n,q)$ and $\gO^{(\mathbf{w})}(N,q) = \gO^-(2n,q)$.  When $N = 2n+1$ is odd, denote $\gO^{(\mathbf{1})}(N,q) = \gO^+(2n+1,q)$ and $\gO^{(\mathbf{d})}(N,q) = \gO^-(2n+1,q)$, and in fact we further have 
$$\gO^+(2n+1, q) \cong \gO^-(2n+1, q),$$ which we commonly denote by $\gO(2n+1,q)$.  We will write $\gO^{\pm}(N,q)$ for a general orthogonal group of one of the types just described.  We let $\SO^{\pm}(N,q)$ denote the special orthogonal group, or the index 2 subgroup consisting of determinant $1$ elements of the corresponding orthogonal group $\gO^{\pm}(N,q)$.  Note that we have
$$ \gO(2n+1, q) \cong \SO(2n+1,q) \times \{ \pm 1 \},$$
where $ Z = \{ \pm 1 \}$ is the center of $\gO^{\pm}(N,q)$.  We let $\Omega^{\pm}(N,q)$ denote the derived subgroup of $\SO^{\pm}(N,q)$, which has index 2 in $\SO^{\pm}(N,q)$.  Then $\Omega^{\pm}(N,q)$ has center $Z = \Omega^{\pm}(N,q) \cap \{1, -1\}$, and the projective group $\POm^{\pm}(N,q) = \Omega^{\pm}(N,q)/Z$ is known to be simple in all but finitely many cases (see \cite[Theorem 6.31]{Gr02}, for example).

We now consider the case that $q$ is even, and the orthogonal group $\gO_Q(V)$ corresponding to some {\it non-defective} quadratic form $Q$ defined on $V$.  The classification of equivalence classes of such quadratic forms when $q$ is even is only somewhat different from the situation when $q$ is odd given above, although we do not need the full description here, and we refer to \cite[Chapter 12]{Gr02} for details.  In particular, when $\dim(V)=2n+1$ is odd, there is again only one isomorphism class of orthogonal groups, which we denote by $\gO(2n+1, q)$.  In this case, we have 
$$\gO(2n+1, q) \cong \Sp(2n,q),$$ 
where $\Sp(2n,q)$ is the symplectic group corresponding to the unique class of non-degenerate alternating forms that one can define on an $\FF_q$-vector space of dimension $2n$, for example see \cite[Theorem 14.2]{Gr02}.  When $q$ is even, we have that $\Sp(2n,q) = \PSp(2n,q)$ is a simple group for all but finitely many cases.   When $N=2n$ is even, there are two equivalence classes of non-defective quadratic forms on $V$, see \cite[Theorem 12.9(2)]{Gr02}.  We write $\gO^+(2n,q)$ for the group corresponding to the form which may be defined as $Q(x_1, \ldots, x_{2n}) = x_1 x_{1 + n} + x_2 x_{2+n} + \cdots + x_n x_{2n}$, and we write $\gO^-(2n,q)$ for the group corresponding to the other possible class of quadratic form.  In the case that $q$ is even, we let $\Omega^{\pm}(2n,q)$ denote the derived group of $\gO^{\pm}(2n,q)$, which has index 2 in $\gO^{\pm}(2n,q)$, and is a simple group for all but finitely many cases.  There is also the following useful criterion for determining when an element of $\gO^{\pm}(2n,q)$ is in its derived subgroup $\Omega^{\pm}(2n,q)$ (see \cite[Proposition 3.2]{Ra11}).

\begin{lemma} \label{OmCritqeven}
If $q$ is even and $g \in \gO^{\pm}(2n,q)$, then $g \in \Om^{\pm}(2n,q)$ if and only if $\mathrm{rank}(1+g)$ is even.
\end{lemma}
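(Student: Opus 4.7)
The plan is to recognize that the assertion is equivalent to identifying the map $D : \gO^{\pm}(2n,q) \to \FF_2$ defined by $D(g) := \mathrm{rank}(1+g) \bmod 2$ with the Dickson invariant, the standard index-$2$ homomorphism distinguishing $\Om^{\pm}(2n,q)$ inside $\gO^{\pm}(2n,q)$ in characteristic $2$. (The ordinary determinant is useless here, since every element of $\gO^{\pm}(2n,q)$ has determinant $1$ when $q$ is even.) Once $D$ is shown to be a surjective homomorphism, its kernel has index $2$; since $\Om^{\pm}(2n,q)$ is by definition the derived subgroup of $\gO^{\pm}(2n,q)$ and also has index $2$ (as stated in the excerpt), and since $\Om^{\pm}(2n,q) \subseteq \ker D$ because $\FF_2$ is abelian, the two subgroups must coincide, giving the lemma.

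First I would invoke the Cartan--Dieudonn\'e theorem in characteristic $2$: every $g \in \gO^{\pm}(2n,q)$ is a product of orthogonal transvections $t_v(x) = x + Q(v)^{-1} B(v,x)\, v$, where $v$ is anisotropic and $B$ is the symmetric bilinear form associated to $Q$. A one-line computation gives $\mathrm{im}(1+t_v) = \FF_q v$, whence $\mathrm{rank}(1+t_v) = 1$, so $D(t_v) = 1$. In particular, $D$ is surjective as soon as it is known to be a homomorphism.

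The crux is showing that $D$ is a homomorphism, for which the key algebraic identity (valid in characteristic $2$) is
\[
1 + gh = (1+g)h + (1+h).
\]
This yields $\mathrm{im}(1+gh) \subseteq \mathrm{im}(1+g) + \mathrm{im}(1+h)$ and hence the easy upper bound $\mathrm{rank}(1+gh) \le \mathrm{rank}(1+g) + \mathrm{rank}(1+h)$. What I would then need is the parity refinement
\[
\mathrm{rank}(1+gh) \equiv \mathrm{rank}(1+g) + \mathrm{rank}(1+h) \pmod 2,
\]
which I would establish by exploiting the orthogonality $\mathrm{im}(1+g) \subseteq \ker(1+g)^{\perp}$ (relative to $B$); this controls the dimension of $\mathrm{im}(1+g) \cap \mathrm{im}(1+h)$ modulo $2$ via the restriction of $B$ to these subspaces. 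An equivalent route is to induct on the number of transvections in a Cartan--Dieudonn\'e factorization, showing that multiplying by a transvection always changes $\mathrm{rank}(1+\cdot)$ by an odd integer.

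The main obstacle is precisely this parity statement: the rank inequality itself is formal, but controlling the parity of the defect $\mathrm{rank}(1+g)+\mathrm{rank}(1+h)-\mathrm{rank}(1+gh)$ requires genuine input from the quadratic form and is the content that makes the Dickson invariant well-defined. Once this is in hand, the index-$2$ counting argument above completes the proof.
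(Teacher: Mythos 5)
The paper does not actually prove this lemma: it is quoted verbatim from \cite[Proposition 3.2]{Ra11}, and the fact itself is classical --- it says that the Dickson invariant of $g$ equals $\mathrm{rank}(1+g) \bmod 2$ (see, e.g., Taylor, \emph{The Geometry of the Classical Groups}). Your overall architecture is exactly the standard one: show that $D(g)=\mathrm{rank}(1+g)\bmod 2$ defines a surjective homomorphism $\gO^{\pm}(2n,q)\to\FF_2$, observe that its kernel and $\Om^{\pm}(2n,q)$ are both index-$2$ subgroups with the latter contained in the former, and conclude they coincide. The computation $\mathrm{rank}(1+t_v)=1$ and the final index count are fine.

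The genuine gap is the one you flag yourself: the homomorphism property $\mathrm{rank}(1+gh)\equiv\mathrm{rank}(1+g)+\mathrm{rank}(1+h)\pmod 2$ \emph{is} the lemma, and neither route you sketch closes it. From $1+gh=(1+g)h+(1+h)$ one gets $\mathrm{im}(1+gh)\subseteq\mathrm{im}(1+g)+\mathrm{im}(1+h)$, but the rank of a sum of two linear maps is not a function of their ranks and of $\dim\bigl(\mathrm{im}(1+g)\cap\mathrm{im}(1+h)\bigr)$, so controlling that intersection modulo $2$ cannot suffice; the (correct, indeed sharp) relation $\mathrm{im}(1+g)=\ker(1+g)^{\perp}$ does not by itself force the parity. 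The transvection-induction variant reduces to showing $\mathrm{rank}(1+gt_v)\neq\mathrm{rank}(1+g)$, i.e., that this particular rank-one perturbation never preserves the rank --- which is again precisely the assertion to be proved, not a formal consequence of rank inequalities; moreover Cartan--Dieudonn\'e in characteristic $2$ has the well-known exception $\gO^{+}(4,2)$, one of whose elements is not a product of transvections. The standard way to supply the missing input is Wall's parametrization: set $U=\mathrm{im}(1+g)$ and define $\chi_g(u,w)=B(x,w)$ for $u=(1+g)x$ and $w\in U$; check that $\chi_g$ is well defined (using $U=\ker(1+g)^{\perp}$) and non-degenerate with $\chi_g(u,u)=Q(u)$, and then use the composition rule relating $\chi_{gh}$ to $\chi_g$ and $\chi_h$ to extract the parity. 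Until that (or an equivalent Clifford-algebra computation) is carried out, the proof is incomplete. One last caveat: identifying $\ker D$ with the \emph{derived} subgroup --- which is how this paper defines $\Om^{\pm}(2n,q)$ --- fails for $\gO^{+}(4,2)$, whose commutator subgroup has index $4$; like the paper, you are implicitly setting aside such degenerate small cases.
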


We will need to use the orders of the groups described above numerous times throughout this paper, and so we list them here for reference.  The first formula holds whether $q$ is even or odd:
\begin{equation} \label{OrderOneven}
|\gO^{\pm}(2n,q)| = 2 q^{n(n-1)} (q^n \mp 1) \prod_{i=1}^{n-1} (q^{2i} -1).
\end{equation}
When $q$ is odd,
\begin{equation} \label{OrderOnoddqodd}
|\gO(2n+1, q)| = 2 q^{n^2} \prod_{i = 1}^n (q^{2i} - 1).
\end{equation}
When $q$ is even,
\begin{equation} \label{OrderOnoddqeven}
|\gO(2n+1, q)| = |\Sp(2n,q)| = q^{n^2} \prod_{i=1}^n (q^{2i} - 1).
\end{equation}
We also have
\begin{equation} \label{OrderSOOmqodd}
|\SO^{\pm}(N,q)| = \frac{1}{2} |\gO^{\pm}(N,q)| \quad \text{ and } \quad |\Om^{\pm}(N,q)| = \frac{1}{2} |\SO^{\pm}(N,q)| \quad \text{ when } q \text{ is odd,}
\end{equation}
and 
\begin{equation} \label{OrderOmqeven}
|\Om^{\pm}(2n,q)| = \frac{1}{2} |\gO^{\pm}(2n,q)| \quad \text{ when } q \text{ is even.}
\end{equation}
Note that when $q$ is odd we have $|\gO(1,q)| = 2$, while when $q$ is even we have $|\gO(1,q)| = |\Sp(0,q)| = 1$.  For $q$ even or odd, we define $|\gO^{+}(0,q)| =1$, but we do not define $|\gO^-(0,q)|$.

\subsection{Characters and Frobenius-Schur indicators}

Let $G$ be a finite group, and we let $\Irr(G)$ denote the collection of irreducible complex characters of $G$, and we let $\langle \cdot, \cdot \rangle$ denote the standard inner product on class functions of $G$.  If $\chi \in \Irr(G)$ and $(\pi, V)$ is the representation of $G$ affording $\chi$, let $[\pi]_{\mathcal{B}}$ denote the matrix representation corresponding to a choice of basis $\mathcal{B}$ for $V$.  We say that $(\pi,V)$ is \emph{defined over $\mathbb{R}$} if there is a basis $\mathcal{B}$ for $V$ such that $[\pi(g)]_{\mathcal{B}}$ has all entries in $\mathbb{R}$ for all $g \in G$.  Recall that the \emph{Frobenius-Schur indicator} of $\chi \in \Irr(G)$, denoted $\ep(\chi)$, is defined as $\ep(\chi) = 0$ if $\chi$ is not real-valued, $\ep(\chi) = 1$ if $(\pi, V)$ is defined over $\mathbb{R}$, and $\ep(\chi) = -1$ if $\chi$ is real-valued but $(\pi, V)$ is not defined over $\mathbb{R}$.  Then we have $\ep(\chi) = \frac{1}{|G|} \sum_{g \in G} \chi(g^2)$, and $\sum_{\chi \in \Irr(G)} \ep(\chi) \chi(1) = \# \{ g \in G \, \mid \, g^2 = 1\}$ (see \cite[Chapter 4]{Is76}, for example).

We now discuss a useful generalization of the Frobenius-Schur indicator.  Let $\iota$ be an automorphism of $G$ such that $\iota^2 = 1$, and we denote the action of $\iota$ on $g \in G$ by ${^\iota g}$.  We also let ${^\iota \chi}$ and ${^\iota \pi}$ be defined by ${^\iota \chi}(g) = \chi({^\iota g})$ and ${^\iota \pi}(g) = \pi({^\iota g})$, respectively.  Given $\chi \in \Irr(G)$, the representation $(\pi, V)$ which affords $\chi$, and the automorphism $\iota$ of $G$, define the \emph{twisted Frobenius-Schur indicator} of $\chi$, denoted $\epi(\chi)$, by
$$ \epi(\chi) = \left\{ \begin{array}{rl} 1 & \text{ if there is some } \mathcal{B} \text{ such that } [{^\iota \pi}(g)]_{\mathcal{B}} = \overline{[\pi(g)]_{\mathcal{B}}} \text{ for all } g \in G, \\ -1 & \text{ if } {^\iota \chi} = \bar{\chi} \text{ but there is no } \mathcal{B} \text{ such that } [{^\iota \pi}(g)]_{\mathcal{B}} = \overline{[\pi(g)]_{\mathcal{B}}} \text{ for all } g \in G, \\ 0 & \text{ if } {^\iota \chi} \neq \bar{\chi}. \end{array} \right. $$

Note that $\epi(\chi) = \pm 1$ for all $\chi \in \Irr(G)$ if and only if ${^\iota g}$ is conjugate to $g^{-1}$ for every $g \in G$.  It was proved by Kawanaka and Matsuyama \cite{KaMa90} that a formula for $\epi(\chi)$ is given by
$$ \epi(\chi) = \frac{1}{|G|} \sum_{g \in G} \chi(g \, {^\iota g}).$$
Further, by applying orthogonality relations, the following formula is obtained by Bump and Ginzburg \cite[Proposition 1]{BuGi04}:
$$ \sum_{\chi \in \Irr(G)} \epi(\chi) \chi(1) = \# \left\{ g \in G \, \mid \, {^\iota g} = g^{-1} \right\}.$$
An immediate consequence of this formula that we will need is as follows.

\begin{lemma} \label{Allepi1} If $G$ is a finite group with automorphism $\iota$ such that $\iota^2 = 1$, then $\epi(\chi) = 1$ for all $\chi \in \Irr(G)$ if and only if
$$ \sum_{\chi \in \Irr(G)} \chi(1) = \# \left\{ g \in G \, \mid \, {^\iota g} = g^{-1} \right\}.$$
\end{lemma}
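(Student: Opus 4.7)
The plan is to prove this as a direct consequence of the Bump--Ginzburg identity
$$ \sum_{\chi \in \Irr(G)} \epi(\chi) \chi(1) = \# \left\{ g \in G \, \mid \, {^\iota g} = g^{-1} \right\}, $$
which is already stated in the paragraph immediately preceding the lemma. The only additional ingredients needed are the facts that $\epi(\chi) \in \{-1, 0, 1\}$ for every $\chi \in \Irr(G)$ and that $\chi(1)$ is a positive integer.

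For the forward direction, I would simply substitute $\epi(\chi) = 1$ into the Bump--Ginzburg identity, which immediately gives the desired equation. For the backward direction, I would subtract $\sum_{\chi} \epi(\chi)\chi(1)$ from $\sum_{\chi} \chi(1)$ to obtain
$$ \sum_{\chi \in \Irr(G)} \bigl(1 - \epi(\chi)\bigr) \chi(1) = 0. $$
Since every summand satisfies $1 - \epi(\chi) \geq 0$ and $\chi(1) > 0$, this forces $1 - \epi(\chi) = 0$ for every $\chi \in \Irr(G)$, giving $\epi(\chi) = 1$ for all $\chi$.

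There is no real obstacle here: the lemma is essentially a repackaging of the Bump--Ginzburg formula together with the sign constraint on the indicator. The only point that requires a moment's care is the non-negativity step in the backward direction, where one uses $\epi(\chi) \leq 1$ term-by-term rather than appealing to the sum alone.
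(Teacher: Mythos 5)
Your proof is correct and matches the paper's approach: the paper gives no written proof, simply calling the lemma an immediate consequence of the Bump--Ginzburg formula, and the argument it has in mind is exactly yours (substitute for the forward direction; use $1-\epi(\chi)\geq 0$ and $\chi(1)>0$ termwise for the converse).
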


When $\iota=1$, we recover the classical Frobenius-Schur indicator, $\epi(\chi) = \ep(\chi)$, in which case $\ep(\chi)=\pm 1$ if and only if $\chi$ is real-valued, and $\ep(\chi) = \pm 1$ for all $\chi \in \Irr(G)$ if and only if $G$ is a real group, so $g$ is conjugate to $g^{-1}$ for all $g \in G$.  Further, Lemma \ref{Allepi1} says that $\ep(\chi)=1$ for all $\chi \in \Irr(G)$, that is $G$ is \emph{totally orthogonal}, exactly when the sum of the degrees of the irreducible characters of $\chi$ is the number of involutions in $G$, where we define an involution of $G$ to be any element $g \in G$ such that $g^2 = 1$.  When $\iota$ is defined by an inner automorphism, say ${^\iota g} = h g h^{-1}$ with $h \in G$, and $h^2 = z \in Z(G)$ where $Z(G)$ is the center of $G$, we have an explicit relationship between $\epi(\chi)$ and $\ep(\chi)$.  In particular, if we let $\omega_{\chi}$ denote the central character of the representation of $G$ which affords $\pi$, we have \cite[Lemma 2.1]{Vi05}
\begin{equation} \label{central}
\epi(\chi) = \omega_{\chi}(z) \ep(\chi).
\end{equation}

Now suppose $H \leq G$ is an index 2 subgroup of $G$, such that for some $s \in G\setminus H$ with $s^2 = 1$, we have $G = H\langle s \rangle = H \cup sH$.  We have the following relationship between indicators and twisted indicators of $G$ and $H$ in this case.

\begin{lemma} \label{index2} Suppose $G$ is a finite group, $H \leq G$, $[G:H] = 2$, $G = H\langle s \rangle$ with $s^2 = 1$, and define $\iota$ on $H$ by ${^\iota h} = shs^{-1}$.  Then the following hold.
\begin{enumerate}
\item[(i)] If $\ep(\chi) = 1$ for all $\chi \in \Irr(G)$, and $H$ is a real group, then for all $\psi \in \Irr(H)$, we have $\ep(\psi) = 1$ and $\epi(\psi) \geq 0$.
\item[(ii)] If $\ep(\chi) = 1$ for all $\chi \in \Irr(G)$, and ${^\iota h}$ is $H$-conjugate to $h^{-1}$ for all $h \in H$, then for all $\psi \in \Irr(H)$, we have $\epi(\psi) = 1$ and $\ep(\psi) \geq 0$ .
\item[(iii)] If $G$ is a real group, and if $\ep(\psi) = 1$ for all $\psi \in \Irr(H)$,  then for all $\chi \in \Irr(G)$ we have $\ep(\chi) = 1$, and for all $\psi \in \Irr(H)$ we have $\epi(\psi) \geq 0$
\item[(iv)] If $G$ is a real group, and if $\epi(\psi) = 1$ for all $\psi \in \Irr(H)$, then for all $\chi \in \Irr(G)$ we have $\ep(\chi) = 1$, and for all $\psi \in \Irr(H)$ we have $\ep(\psi) \geq 0$.
\end{enumerate}
\end{lemma}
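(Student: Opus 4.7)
The plan is to reduce all four parts to a uniform Clifford-theoretic analysis built on the decomposition $\Irr(H) = A \sqcup B$, where $A = \{\psi \in \Irr(H) : {^\iota}\psi = \psi\}$ and $B = \Irr(H)\setminus A$. Each $\psi \in A$ extends to exactly two characters $\chi_1,\chi_2 \in \Irr(G)$ with $\chi_2 = \chi_1 \otimes \lambda$ for $\lambda$ the nontrivial linear character of $G/H$, while each $\psi \in B$ gives $\Ind_H^G \psi \in \Irr(G)$; moreover every $\chi \in \Irr(G)$ arises in exactly one of these two ways. I will also use two elementary facts: for $\psi \in A$, either $\ep(\psi)$ and $\epi(\psi)$ are both in $\{\pm 1\}$ (precisely when $\psi$ is real-valued) or both equal $0$; while for $\psi \in B$ at most one of $\ep(\psi), \epi(\psi)$ can be nonzero, since ${^\iota}\psi \neq \psi$ forces $\bar\psi$ to coincide with at most one of $\psi$, ${^\iota}\psi$.

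The main technical step is a pair of Frobenius--Schur identities relating indicators of $\chi \in \Irr(G)$ to those of $\psi \in \Irr(H)$. Splitting the sum $\ep(\chi) = \frac{1}{|G|}\sum_{g \in G}\chi(g^2)$ over the cosets $H$ and $sH$, using $(sh)^2 = {^\iota}h \cdot h \in H$ and the Kawanaka--Matsuyama formula $\epi(\psi) = \frac{1}{|H|}\sum_{h}\psi(h \cdot {^\iota}h)$, together with the restriction information $\chi_i|_H = \psi$ (for $\psi \in A$) and $(\Ind_H^G \psi)|_H = \psi + {^\iota}\psi$ (for $\psi \in B$), one obtains
\begin{equation*}
\ep(\chi_i) = \tfrac{1}{2}\bigl(\ep(\psi) + \epi(\psi)\bigr) \ (\psi \in A), \qquad \ep(\Ind_H^G \psi) = \ep(\psi) + \epi(\psi) \ (\psi \in B).
\end{equation*}
The equality $\ep(\chi_1) = \ep(\chi_2)$ in the first identity follows from $\lambda(g^2) = 1$ for all $g \in G$, and the second identity uses $\ep({^\iota}\psi) = \ep(\psi)$ and $\epi({^\iota}\psi) = \epi(\psi)$.

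With these two identities in hand, each of (i)--(iv) follows by short case analysis on $A$ and $B$. For (i), the hypothesis $\ep(\chi) = 1$ forces $\ep(\psi) + \epi(\psi) = 2$ on $A$, whence both equal $1$; and on $B$ it forces $\{\ep(\psi),\epi(\psi)\} = \{1,0\}$, where adding the realness of $H$ (i.e.\ $\ep(\psi) \neq 0$) selects $\ep(\psi) = 1, \epi(\psi) = 0$. Part (ii) is symmetric: the hypothesis that ${^\iota}h$ is $H$-conjugate to $h^{-1}$ is equivalent to $\epi(\psi) \neq 0$ for all $\psi$, which on $B$ selects $\epi(\psi) = 1, \ep(\psi) = 0$. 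For (iii), $\ep(\psi) = 1$ automatically forces $\epi(\psi) = 0$ on $B$ and $\epi(\psi) = \pm 1$ on $A$; the alternative $\epi(\psi) = -1$ on $A$ would give $\ep(\chi_i) = 0$ via the first identity, contradicting that $G$ is real, so $\epi(\psi) = 1$ throughout, and the identities then yield $\ep(\chi) = 1$ for every $\chi$. Part (iv) is obtained from (iii) by interchanging the roles of $\ep$ and $\epi$.

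The main obstacle is the careful derivation of the two indicator identities above, particularly the factor of $\tfrac{1}{2}$ in the extension case, which requires both verifying that $\chi_1$ and $\chi_2$ have equal classical indicators and correctly accounting for the two summands $\psi, {^\iota}\psi$ in $(\Ind_H^G \psi)|_H$ when $\psi \in B$. Once these identities are established, the remainder of the proof is routine bookkeeping over the finitely many sign combinations.
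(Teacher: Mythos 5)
Your proposal is correct, and every step checks out: the coset-splitting computation of $\ep(\chi)$ using $(sh)^2 = {^\iota h}\cdot h$ and the Kawanaka--Matsuyama formula does yield $\ep(\chi_i) = \tfrac{1}{2}(\ep(\psi)+\epi(\psi))$ in the extension case and $\ep(\Ind_H^G\psi) = \ep(\psi)+\epi(\psi)$ in the induced case (using $\ep({^\iota\psi})=\ep(\psi)$, $\epi({^\iota\psi})=\epi(\psi)$, and the vanishing of induced characters off the normal subgroup $H$), and your sign bookkeeping over the sets $A$ and $B$ is sound. The difference from the paper is that the paper does not prove these identities at all: it cites them as \cite[Proposition 2.1]{Vi10} in the form $\ep(\chi)=\ep(\psi)+\epi(\psi)$, respectively $\ep(\chi_1)+\ep(\chi_2)=\ep(\psi)+\epi(\psi)$ with $\ep(\chi_1)=\ep(\chi_2)$, and then runs a terser version of the same case analysis using only that all indicators lie in $\{0,\pm 1\}$ (without separating $\Irr(H)$ into $\iota$-fixed and non-fixed characters). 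So your argument is self-contained where the paper's is not, at the cost of rederiving a known result; your explicit $A/B$ decomposition also makes transparent \emph{why} exactly one of $\ep(\psi),\epi(\psi)$ survives on the induced side, which the paper leaves implicit in the phrase ``the only possibility for \eqref{epcond} to hold.'' Both routes are valid; yours would be the right choice if one wanted the lemma to stand independently of \cite{Vi10}.
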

\begin{proof} All of the statements essentially follow from \cite[Proposition 2.1]{Vi10}, which may be applied as follows.  Given any $\psi \in \Irr(H)$, the induced character $\psi^G = \chi$ is either irreducible or else $\psi^G = \chi_1 + \chi_2$ where $\chi_1, \chi_2 \in \Irr(G)$.  Then \cite[Proposition 2.1]{Vi10} says that in these two cases, we have
\begin{equation} \label{epcond}
\ep(\chi) = \ep(\psi) + \epi(\psi), \quad \text{ or } \quad \ep(\chi_1) + \ep(\chi_2) = \ep(\psi) + \epi(\psi) \text{ with } \ep(\chi_1) = \ep(\chi_2),
\end{equation}
respectively.  

In cases (i) and (ii), we have $\ep(\chi) = \ep(\chi_1) = \ep(\chi_2) = 1$.  If $H$ is a real group, then $\ep(\psi) = \pm 1$ for all $\psi \in \Irr(G)$, and the only possibility for \eqref{epcond} to hold is that $\ep(\psi) =1$ and $\epi(\psi) \geq 0$ for all $\psi \in \Irr(G)$, and (i) follows.  Likewise for (ii), if ${^\iota h}$ is conjugate to $h^{-1}$ in $H$ for all $h \in H$, then $\epi(\psi) = \pm 1$ for all $\psi \in \Irr(G)$, and it follows from \eqref{epcond} we must have $\epi(\psi) = 1$ and $\ep(\psi) \geq 0$ for all $\psi \in \Irr(H)$.

In cases (iii) and (iv), we have $\ep(\chi) = \pm 1$ and $\ep(\chi_1)= \ep(\chi_2) = \pm 1$.  In (iii), since $\ep(\psi) = 1$ for all $\psi \in \Irr(H)$, we have \eqref{epcond} holds only if $\ep(\chi) =1$ for all $\chi \in \Irr(G)$ and $\epi(\psi) \geq 0$ for all $\psi \in \Irr(H)$.  Similarly in (iv), since $\epi(\psi) = 1$ for all $\psi \in \Irr(H)$, the only way \eqref{epcond} can be satisfied is if $\ep(\chi) = 1$ for all $\chi \in \Irr(G)$ (so $G$ is totally orthogonal) and $\ep(\psi) \geq 0$ for all $\psi \in \Irr(H)$.
\end{proof}

We will also need the following relationship between indicators of $G$ and $G/Z(G)$ when considering projective classical groups.

\begin{lemma} \label{epiGmodZ} Let $G$ be a finite group with automorphism $\iota$ such that $\iota^2 = 1$, and let $Z=Z(G)$ be the center of $G$.  Let $\iota$ also denote the automorphism of $G/Z$ defined by $\iota(gZ) = \iota(g) Z$.  If $\epi(\chi)=1$ for every $\chi \in \Irr(G)$, then $\epi(\psi) = 1$ for every $\psi \in \Irr(G/Z)$.
\end{lemma}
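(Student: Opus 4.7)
The plan is to exploit the standard identification $\Irr(G/Z) = \{\chi \in \Irr(G) : Z \subseteq \ker \chi\}$ and show that, under this identification, the twisted Frobenius--Schur indicator is preserved. Once that is established, the conclusion is immediate: if $\epi(\chi) = 1$ for every $\chi \in \Irr(G)$, then in particular $\epi(\chi) = 1$ for every $\chi$ with $Z \subseteq \ker \chi$, and these are exactly the characters coming from $\Irr(G/Z)$.

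The work is therefore concentrated in the following compatibility statement. Let $\psi \in \Irr(G/Z)$, and let $\chi \in \Irr(G)$ be its inflation, so that $\chi(g) = \psi(gZ)$ for all $g \in G$. The claim is that $\epi(\chi) = \epi(\psi)$. I would prove this by using the Kawanaka--Matsuyama formula, writing
\begin{equation*}
\epi(\chi) = \frac{1}{|G|}\sum_{g \in G} \chi(g\,{^\iota g}), \qquad \epi(\psi) = \frac{1}{|G/Z|}\sum_{\bar g \in G/Z} \psi(\bar g\,{^\iota\bar g}),
\end{equation*}
and checking that the function $g \mapsto \chi(g\,{^\iota g})$ is constant on cosets of $Z$. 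For this, if $z \in Z$ and $g' = gz$, then because $z$ is central and because $\iota$ preserves $Z$ (as it is an automorphism of $G$),
\begin{equation*}
g'\,{^\iota g'} = gz\cdot{^\iota g}\cdot{^\iota z} = g\,{^\iota g}\cdot z\,{^\iota z},
\end{equation*}
with $z\,{^\iota z} \in Z$. Since $Z \subseteq \ker \chi$ and $\chi$ is a class function on which $Z$ acts by a character, $\chi(g'\,{^\iota g'}) = \chi(g\,{^\iota g})$. Hence the sum over $G$ collapses into $|Z|$ identical copies of the sum over $G/Z$, and $\epi(\chi) = \epi(\psi)$ follows.

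The main potential obstacle is simply being careful about the fact that ${^\iota g}$ need not be $G$-conjugate to $g$ or to $g^{-1}$ in general, so one cannot argue purely by looking at the induced involution on conjugacy classes; it is essential to use the Kawanaka--Matsuyama formula (or equivalently the sum in Lemma \ref{Allepi1}) to push the equality through. Apart from this, the verification that the coset representative calculation is well-defined is the only substantive step, and it hinges only on the centrality of $Z$ together with $\iota(Z) = Z$.
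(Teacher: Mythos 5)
Your proposal is correct and follows essentially the same route as the paper: both identify $\psi \in \Irr(G/Z)$ with its inflation $\chi \in \Irr(G)$ and then use the Kawanaka--Matsuyama sum to show $\epi(\psi) = \epi(\chi)$, the key point being that $g \mapsto \chi(g\,{}^{\iota}g)$ is constant on cosets of $Z$. In fact you make explicit the coset-constancy computation ($g'\,{}^{\iota}g' = g\,{}^{\iota}g \cdot z\,{}^{\iota}z$ with $z\,{}^{\iota}z \in Z \subseteq \ker\chi$) that the paper leaves implicit in its chain of equalities, which is a small improvement in rigor.
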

\begin{proof} Note that $\iota$ is well-defined on $G/Z$ by $\iota(gZ) = \iota(g) Z$.  Let $\psi \in \Irr(G/Z)$ and define the class function $\chi$ on $G$ by $\chi(g) = \psi(gZ)$.  It follows that $\chi \in \Irr(G)$, since
$$\langle \chi, \chi \rangle = \frac{1}{|G|} \sum_{g \in G} |\chi(g)|^2 = \frac{1}{|G|} \sum_{g \in G} |\psi(gZ)|^2 = \frac{|Z|}{|G|} \sum_{gZ \in G/Z} |\psi(gZ)|^2 = \langle \psi, \psi \rangle = 1.$$
By assumption we have $\epi(\chi) = 1$, and we can compute $\epi(\psi)$ as follows:
$$\epi(\psi) = \frac{|Z|}{|G|} \sum_{gZ \in G/Z} \psi( g \, {^\iota gZ}) = \frac{|Z|}{|G|} \sum_{gZ \in G/Z} \chi(g \, {^\iota g}) = \frac{1}{|G|} \sum_{g \in G} \chi(g \,  {^\iota g}) = \epi(\chi) = 1,$$
as claimed.
\end{proof}

\section{Real Simple Groups} \label{ReSimp}

Tiep and Zalesski classified all finite quasi-simple groups (and so all finite simple groups) which are real \cite{TiZa05}.  The following theorem, stated in \cite[Theorems 2 and 3]{VdGa10}, solves the problem of classifying all finite simple groups which are strongly real, and it turns out that a finite simple group is real if and only if it is strongly real.  This theorem follows from the work of many authors \cite{Ba87, Dj67, ElNo82, Ga10, Go81, KnTh98, KoNu05, Ra11, TiZa05, VdGa10, Wo66}.

\begin{theorem} \label{realsimple}  Let $G$ be a finite simple group.  Then $G$ is real if and only if $G$ is strongly real, and this occurs if and only if $G$ is isomorphic to one of the following groups:

(1) One of the Janko sporadic groups $J_1$ or $J_2$;

(2) One of the alternating groups $A_{10}$ or $A_{14}$;

(3) The Steinberg triality group ${^3 D}_4(q)$;

(4) $\Omega(2n+1,q)$ for $q \equiv 1($mod $4)$ and $n \geq 3$;

(5) $\Omega(9,q)$ for $q \equiv 3($mod $4)$;

(6) $\POm^-(4m,q)$ for $n \geq 2$;

(7) $\POm^+(4m, q)$ for $q \not\equiv 3($mod $4)$ and $n \geq 3$;

(8) $\POm^+(8,q)$;

(9) $\PSp(2n, q)$ for $q \not\equiv 3($mod $4)$ and $n \geq 1$.
\end{theorem}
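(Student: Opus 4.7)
The plan is to treat this as a compilation theorem rather than a single argument: the direction strongly real $\Rightarrow$ real is immediate, so the substantive content is that every real finite simple group can in fact be inverted by involutions. I would split the argument into a short bibliographic reduction followed by a case-by-case verification keyed to the families (1)--(9).

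For the reduction, I would first invoke the Tiep--Zalesski classification of real finite simple groups from \cite{TiZa05}, which shows that the list of real simple groups is precisely the union of families (1)--(9). This immediately gives the implication ``strongly real $\Rightarrow$ real $\Rightarrow$ in the list.'' The content is then the converse, which I would check family by family. For the sporadic cases $J_1$ and $J_2$ in item (1), strong reality can be read off the ATLAS character tables. For $A_{10}$ and $A_{14}$ in item (2), I would cite Djokovi\'c's classification of strongly real alternating groups \cite{Dj67}. For the orthogonal and symplectic families (4)--(9), the central tool is Wonenburger's theorem \cite{Wo66} that in odd characteristic every element of an orthogonal or symplectic group is a product of two involutions, refined and extended to cover the passage to derived subgroups in \cite{ElNo82, KnTh98, KoNu05} and to characteristic two in \cite{Ra11}; these references explain exactly the spinor-norm and determinant conditions that produce the congruence restrictions on $q$ and the exclusion of $\POm^+(4m,q)$ when $q \equiv 3 \pmod 4$. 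The triality case ${}^3D_4(q)$ in item (3) is treated in \cite{Ga10, VdGa10}, and the small-rank and exceptional-isomorphism complications are dealt with in \cite{Ba87, Go81}.

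The main obstacle in executing this plan is bookkeeping rather than mathematics. Many of the sources establish strong reality for $\gO^{\pm}(N,q)$, $\SO^{\pm}(N,q)$, or $\Om^{\pm}(N,q)$ rather than directly for the simple quotient $\POm^{\pm}(N,q)$, so in each case one has to verify that an inverting involution can be chosen inside $\Om^{\pm}$ and descends to an involution modulo the center. In particular, the excluded family $\POm^+(4m,q)$ with $q \equiv 3 \pmod 4$ is precisely the one for which any candidate inverting involution necessarily lies outside the derived subgroup, which is why it does not appear in the list. Once these translations between the various classical groups and their centers are performed carefully, as is done in the survey \cite{VdGa10}, the theorem is obtained simply by assembling the individual references.
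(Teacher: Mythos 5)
Your proposal takes essentially the same approach as the paper, which in fact offers no proof of this theorem at all: it simply states that the result is \cite[Theorems 2 and 3]{VdGa10} and follows from the listed body of literature, so a compilation argument of the kind you outline (Tiep--Zalesski for the classification of real simple groups, then a case-by-case verification of strong reality from the cited sources) is precisely what is intended. The only corrections needed are bibliographic: the alternating-group case is due to Bagi\'{n}ski \cite{Ba87} rather than Djokovi\'{c} \cite{Dj67} (the latter concerns products of two involutions of linear transformations, feeding into the classical-group cases), and \cite{Ba87} accordingly does not handle the small-rank exceptional isomorphisms.
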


As mentioned in the introduction, it has been conjectured that a finite simple group is strongly real if and only if it is totally orthogonal.  We now prove this statement for all but two types of groups, namely $\PSp(2n,q) = \Sp(2n,q)$ for $q$ even, and $\POm^{\pm}(4m,q) = \Om^{\pm}(4m, q)$ for $q$ even.  

\begin{theorem} \label{simple} Let $G$ be a finite simple group other than one of the form $\Sp(2n,q)$ with $q$ even or $\Omega^{\pm}(4m,q)$ with $q$ even.  Then $G$ is strongly real if and only if $G$ is totally orthogonal.  
\end{theorem}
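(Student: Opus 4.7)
The plan is to invoke Theorem \ref{realsimple} and reduce the biconditional to a finite case analysis. The direction ``totally orthogonal $\Rightarrow$ strongly real'' is immediate, since total orthogonality forces every $\chi \in \Irr(G)$ to be real-valued, so $G$ is a real group, and Theorem \ref{realsimple} then gives that $G$ is strongly real. For the converse I would verify total orthogonality for each of the nine families of Theorem \ref{realsimple}, skipping only the two families excluded in the hypothesis. For the sporadic groups $J_1, J_2$, the alternating groups $A_{10}, A_{14}$, and the Steinberg triality group ${^3}D_4(q)$ (items (1)--(3)), I would cite existing character-theoretic data: the ATLAS and the classical indicator formula for alternating groups dispose of the first four, and total orthogonality for ${^3}D_4(q)$ can be extracted from the published character tables of Deriziotis--Michler and Spaltenstein.

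The main work lies in items (4)--(8). The driving input is Gow's theorem that $\gO^{\pm}(N,q)$ is totally orthogonal whenever $q$ is odd. Starting from this, I would chain down the tower $\gO^{\pm}(N,q) \supset \SO^{\pm}(N,q) \supset \Om^{\pm}(N,q) \twoheadrightarrow \POm^{\pm}(N,q)$ by applying Lemma \ref{index2}(i) at each index-2 descent and Lemma \ref{epiGmodZ} with $\iota = 1$ for the final quotient by the center. At each index-2 step Lemma \ref{index2}(i) demands that the smaller group be real, which is exactly the conclusion of Theorem \ref{realsimple} in the cases listed; the parity restrictions there (e.g.\ $q \equiv 1 \pmod 4$ in (4), the isolated dimension $n=4$ in (5), $q \not\equiv 3 \pmod 4$ in (7)) are inherited verbatim from Theorem \ref{realsimple}, so no new constraint is introduced. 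For item (9), once $q$ even is excluded we are left with $\PSp(2n,q)$ for $q \equiv 1 \pmod 4$, which follows in a single step from Lemma \ref{epiGmodZ} applied to the classical total orthogonality of $\Sp(2n,q)$ for $q \equiv 1 \pmod 4$.

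The principal technical obstacle I anticipate is the bookkeeping at each descent: Lemma \ref{index2} requires a genuine \emph{involution} $s$ representing the nontrivial coset of the index-2 subgroup, not merely any coset representative. In practice this means producing a reflection, or a product of two commuting reflections, with a prescribed determinant and spinor norm at each step, separately for the cosets $\gO^{\pm}/\SO^{\pm}$ and $\SO^{\pm}/\Om^{\pm}$, and with attention to whether $-I \in \Om^{\pm}$ (which governs the size of $Z(\Om^{\pm})$ and hence the precise form of the quotient step in Lemma \ref{epiGmodZ}). Once these involutions are exhibited, the chain of implications runs uniformly across all the remaining cases, and the theorem follows.
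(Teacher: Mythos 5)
Your overall skeleton (the easy direction via Theorem \ref{realsimple}, then a case-by-case verification of total orthogonality driven by Gow's theorem, Lemma \ref{index2} and Lemma \ref{epiGmodZ}) matches the paper, but two of your steps contain genuine errors. The most concrete one is case (9): $\Sp(2n,q)$ is \emph{not} totally orthogonal for $q \equiv 1 \pmod 4$. Gow's theorem in this case says $\ep(\chi) = \omega_\chi(-I)$, which equals $-1$ whenever the central character is nontrivial on $-I$ (already $\Sp(2,5) = \SL(2,5)$ has quaternionic characters of degree $2$), so Lemma \ref{epiGmodZ} with $\iota = 1$ has a false hypothesis and the single-step deduction collapses. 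The paper instead takes $\iota$ to be conjugation by an explicit $h \in \Sp(2n,q)$ with $h^2 = -I$, uses \eqref{central} to convert Gow's formula into $\epi(\chi) = \omega_\chi(-I)\ep(\chi) = 1$ for all $\chi$, pushes this twisted statement through Lemma \ref{epiGmodZ}, and only then observes that in $\PSp(2n,q)$ the induced automorphism is conjugation by an element squaring to the identity coset, so the twisted indicator reverts to the ordinary one.

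The second problem is your claim that the reality hypotheses required by Lemma \ref{index2}(i) at each descent are ``inherited verbatim from Theorem \ref{realsimple}.'' That theorem addresses only the simple groups $\Omega(2n+1,q)$ and $\POm^{\pm}(4m,q)$; it says nothing about the reality of the intermediate groups $\SO^{\pm}(N,q)$ or $\Om^{\pm}(4m,q)$ that your chain $\gO^{\pm} \supset \SO^{\pm} \supset \Om^{\pm} \twoheadrightarrow \POm^{\pm}$ passes through, and reality of $G/Z$ does not imply reality of $G$. So each intermediate descent needs a separate citation (e.g.\ Wonenburger/Kn\"{u}ppel--Thomsen for $\SO^{\pm}$, and something nontrivial for $\Om^{\pm}(4m,q)$ itself) that you have not supplied. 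The paper avoids the issue by reordering the tower: it starts from Gow's result that $\SO^{\pm}(4m,q)$ (not $\gO^{\pm}$) is totally orthogonal, applies Lemma \ref{epiGmodZ} --- which requires no reality hypothesis --- to get $\PSO^{\pm}(4m,q)$, and then performs a \emph{single} index-$2$ descent to $\POm^{\pm}(4m,q)$, at which point the only reality needed is exactly what Theorem \ref{realsimple} provides. (For the odd-dimensional cases the paper likewise descends just once, from $\SO(2n+1,q)$ directly to the simple group $\Omega(2n+1,q)$.) Your worry about exhibiting explicit involutions in the nontrivial cosets is legitimate but secondary; the real repair is to reorder the chain and to fix the symplectic case as above.
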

\begin{proof}  First, if $G$ is totally orthogonal, then in particular $G$ is real, and from Theorem \ref{realsimple} we know $G$ is strongly real.

Conversely, suppose that $G$ is strongly real, so listed in Theorem \ref{realsimple}, but is not $\Sp(2n,q)$ with $q$ even or $\Omega^{\pm}(4m,q)$ with $q$ even.  We go through each type of group in the classification, and give references or show that it is totally orthogonal.

If $G$ is one of the Janko groups $J_1$ or $J_2$, then the fact that $G$ is totally orthogonal follows from the Atlas of finite groups \cite[pgs. 36, 42-43]{Atlas}.

If $G$ is $A_{10}$ or $A_{14}$, then $G$ is an index 2 subgroup of $S_{10}$ or $S_{14}$.  For all $n$, $S_n$ is totally orthogonal (all representations of $S_n$ are defined over $\mathbb{Q}$ by \cite[Theorem 2.1.12]{JaKe81}).  Since $G$ is real, it follows from Lemma \ref{index2}(i) that $G$ is totally orthogonal.

If $G={^3 D}_4(q)$, then for $q$ odd, $G$ is totally orthogonal by Barry \cite{Ba88}.  If $q$ is even, then $G$ is totally orthogonal by Ohmori \cite{Oh03}.

Consider the cases when either $G=\Omega(2n+1,q)$ with $n \geq 3, q \equiv 1($mod $4)$ or $G=\Omega(9,q)$ with $q \equiv 3($mod $4)$.  It is a result of Gow \cite[Theorem 2]{Go85} that $\SO(2n+1,q)$ is totally orthogonal when $q$ is odd.  Since $G$ is both a real group and an index 2 subgroup in some $\SO(2n+1,q)$, it follows from Lemma \ref{index2}(i) that $G$ is totally orthogonal.

Now consider the case when either $G=\POm^+(8, q)$ with $q$ odd, $G=\POm^-(4m,q)$ with $q$ odd and $m \geq 2$, or $G=\POm^+(4m,q)$ with $q \equiv 1($mod $4)$ and $m \geq 3$.    Gow \cite[Theorem 2]{Go85} proved that for $q$ odd and $m \geq 1$ the groups $\SO^{\pm}(4m, q)$ are all totally orthogonal, and so by Lemma \ref{epiGmodZ} the groups $\PSO^{\pm}(4m, q)$ are totally orthogonal.  Since in each case $G$ is an index 2 subgroup of some $\PSO^{\pm}(4m,q)$, and $G$ is a real group, then it follows from Lemma \ref{index2}(i) that $G$ is totally orthogonal.

Finally, we consider the case when $G = \PSp(2n,q)$ with $q \equiv 1($mod $4)$ and $n \geq 1$.  Gow proved \cite[Theorem 1]{Go85} that if $\chi$ is an irreducible character of $\Sp(2n,q)$ with $q \equiv 1($mod $4)$, then $\ep(\chi) = \omega_{\chi}(-I)$, where $\omega_{\chi}$ is the central character of the representation with character $\chi$.  Let $\alpha \in \FF_q$ such that $\alpha^2 = -1$.  If we define $\Sp(2n,q)$ via the alternating form corresponding to $\left[ \begin{array}{cc} 0 & -I_n \\ I_n & 0 \end{array} \right]$ (which may always be done), then $h = \left[\begin{array}{cc} \alpha I_n & 0 \\ 0 & -\alpha I_n \end{array} \right] \in \Sp(2n,q)$.  Then define $\iota$ to be the order 2 inner automorphism of $\Sp(2n,q)$ defined by conjugation by $h$.  Since $h^2 = -I$, then by \eqref{central} we have $\epi(\chi) = \omega_{\chi}(-I) \ep(\chi)$ for any irreducible character $\chi$ of $\Sp(2n,q)$.  From the result of Gow, for $q \equiv 1($mod $4)$ it follows that $\epi(\chi) = 1$ for every irreducible character $\chi$ of $\Sp(2n,q)$.  By Lemma \ref{epiGmodZ}, we have that $\epi(\psi)=1$ for any irreducible character $\psi$ of $G$, where $\iota$ is the induced inner automorphism defined by $hZ$ in $G$.  In this case, $(hZ)^2 = -IZ=Z$ in $G$, and so again by \eqref{central} we have $1=\epi(\psi) = \omega_{\psi}(Z) \ep(\psi) = \ep(\psi)$ for any irreducible character $\psi$ of $G$.  Thus $G$ is totally orthogonal.
\end{proof}

\section{Numbers of Involutions} \label{NumInvols}

In this section we count the number of involutions in various subgroups and cosets in the finite orthogonal groups.  In general, if $G$ is a group (or some subset of a group), then we will write $i(G)$ for the number of involutions in $G$.

\subsection{Groups over fields of odd characteristic}

We first consider the case that $q$ is odd.  Below we recall the number of involutions in the full orthogonal group $\gO^{\pm}(n,q)$, which is given in \cite[Lemma 6.1]{FuGuSt16}.  

\begin{proposition} \label{Oodd}
Let $q$ be odd.  
\begin{enumerate}
\item[(1)] The number of involutions in $\gO^{+}(2n,q)$ is equal to
$$ i(\gO^{+}(2n,q)) = \sum_{k=0}^{2n} \frac{ |\gO^{+}(2n,q)| }{ |\gO^+(k,q)| |\gO^+(2n-k,q)|} + \sum_{k=1}^{2n-1} \frac{ |\gO^{+}(2n,q)| }{ |\gO^-(k,q)| |\gO^-(2n-k,q)|}.$$
\item[(2)] The number of involutions in $\gO^{-}(2n,q)$ is equal to
$$ i(\gO^{-}(2n,q)) = \sum_{k=0}^{2n-1} \frac{ |\gO^-(2n,q)| }{ |\gO^+(k,q)| |\gO^-(2n-k,q)|} + \sum_{k=1}^{2n} \frac{ |\gO^{-}(2n,q)| }{ |\gO^-(k,q)| |\gO^+(2n-k,q)|}.$$
\item[(3)] The number of involutions in $\gO(2n+1,q)$ is equal to
$$i(\gO(2n+1,q)) = \sum_{k=0}^{2n+1} \frac{ |\gO(2n+1,q)| }{ |\gO^+(k,q)| |\gO^+(2n+1-k,q)|} + \sum_{k=1}^{2n} \frac{ |\gO(2n+1,q)| }{ |\gO^-(k,q)| |\gO^-(2n+1-k,q)|}.$$
\end{enumerate}
\end{proposition}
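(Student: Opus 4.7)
The plan is to realize involutions in $\gO_Q(V)$ as orthogonal direct-sum decompositions of $V$, then count those decompositions by orbit–stabilizer. Since $q$ is odd, any $\sigma \in \gO_Q(V)$ with $\sigma^2 = 1$ yields a decomposition $V = V_+ \oplus V_-$ with $V_{\pm} = \ker(\sigma \mp 1)$. Preservation of the associated symmetric bilinear form $B$ forces $B(v,w) = B(\sigma v, \sigma w) = -B(v,w)$, hence $B(v,w)=0$, for $v \in V_+$ and $w \in V_-$, so $V_+ \perp V_-$ and $Q$ restricts to a non-degenerate form on each summand. Conversely, any orthogonal decomposition $V = U \perp W$ into non-degenerate subspaces determines a unique involution acting as $+1$ on $U$ and $-1$ on $W$, giving a bijection between involutions in $\gO_Q(V)$ and ordered pairs $(U,W)$ of this form.

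With this bijection in hand, I would stratify the pairs by $k = \dim U$ and by the Witt types $\mathbf{s}_1, \mathbf{s}_2 \in \bW$ of $(U,W)$. Witt's extension theorem implies that $\gO_Q(V)$ acts transitively on the ordered pairs of each admissible type, and the setwise stabilizer of a fixed $(U,W)$ is $\gO^{(\mathbf{s}_1)}(k,q) \times \gO^{(\mathbf{s}_2)}(N-k,q)$. Orbit–stabilizer therefore contributes
\[
\frac{|\gO_Q(V)|}{|\gO^{(\mathbf{s}_1)}(k,q)| \, |\gO^{(\mathbf{s}_2)}(N-k,q)|}
\]
involutions for each compatible pair $(\mathbf{s}_1,\mathbf{s}_2)$ and each $k$, including the trivial endpoints $k=0$ and $k=N$ (using the convention $|\gO^+(0,q)|=1$), which account for the two central involutions $\pm I$.

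The last step, identifying the compatible Witt-type pairs for each ambient type of $V$ and each split of the dimension, is where the main bookkeeping lies. The natural tool is the discriminant (determinant of the Gram matrix modulo squares), which is multiplicative under orthogonal direct sums; a short calculation of the standard forms yields discriminants $(-1)^n$, $(-1)^n\delta$, $(-1)^m\cdot 2$, and $(-1)^m\cdot 2\delta$ for the Witt types $\mathbf{0}, \mathbf{w}, \mathbf{1}, \mathbf{d}$ (of dimensions $2n,2n,2m+1,2m+1$, respectively). Combined with the classification of non-degenerate quadratic forms over $\FF_q$ by dimension and discriminant, this shows that for each ambient Witt type and each $k$ there are exactly two compatible pairs $(\mathbf{s}_1,\mathbf{s}_2)$, and these two pairs produce exactly the two sums appearing in each of (1), (2), and (3). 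The delicate point is the odd/odd case: depending on $n$ and on $q \bmod 4$, the two compatible pairs may be $(\mathbf{1},\mathbf{1}),(\mathbf{d},\mathbf{d})$ or else $(\mathbf{1},\mathbf{d}),(\mathbf{d},\mathbf{1})$, but since $|\gO^+(2m+1,q)| = |\gO^-(2m+1,q)|$ the orbit–stabilizer summand is numerically the same under either labeling, so the formula as stated is valid uniformly in $q$. Repeating the discriminant bookkeeping with the ambient Witt type changed to $\mathbf{w}$ or $\mathbf{1}$ gives (2) and (3) in the same way.
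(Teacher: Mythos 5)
Your proposal is correct and takes essentially the same route as the paper, which obtains these formulas (following \cite{FuGuSt16}) by observing that an involution is determined by its $\pm 1$-eigenspace decomposition, that for each eigenspace dimension $k$ there are exactly two conjugacy classes distinguished by the Witt type of the restricted form, and that the centralizer is the product of the two smaller orthogonal groups --- your orbit--stabilizer count on ordered orthogonal decompositions is precisely this class-size computation, with the transitivity supplied by Witt's theorem. Your discriminant bookkeeping, including the observation that $|\gO^+(2m+1,q)|=|\gO^-(2m+1,q)|$ renders the odd/odd terms insensitive to $q \bmod 4$, correctly fills in the details the paper leaves implicit.
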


The quantities in Proposition \ref{Oodd} are obtained by noting that an involution in $\gO^{\pm}(N,q)$ has only $+1$ or $-1$ as eigenvalues.  If we fix the $-1$-eigenspace to have dimension $k$ and so the $+1$-eigenspace has dimension $N-k$, there are two conjugacy classes of involutions with this property, and their centralizers are products of smaller orthogonal groups whose orders are given in the denominators of the above expressions.  The type of orthogonal groups making up the direct product of their centralizer depends on the Witt type obtained when restricting the quadratic form to the eigenspaces.  Only slight modifications to this reasoning are needed to obtain expressions for other required involution counts.

\begin{proposition} \label{SOodd}
Let $q$ be odd.
\begin{enumerate}
\item[(1)] The number of involutions in $\SO^{+}(2n,q)$ is equal to
$$ i(\SO^{+}(2n,q)) = \sum_{R=0}^n \frac{ |\gO^{+}(2n,q)| }{ |\gO^+(2R,q)| |\gO^+(2n-2R,q)|} + \sum_{R=1}^{n-1} \frac{ |\gO^{+}(2n,q)| }{ |\gO^-(2R,q)| |\gO^-(2n-2R,q)|}.$$
\item[(2)] The number of involutions in $\SO^{-}(2n,q)$ is equal to
\begin{align*}
i(\SO^{-}(2n,q)) & = \sum_{R=0}^{n-1} \frac{ |\gO^{-}(2n,q)| }{ |\gO^+(2R,q)| |\gO^-(2n-2R,q)|} + \sum_{R=1}^{n} \frac{ |\gO^{-}(2n,q)| }{ |\gO^-(2R,q)| |\gO^+(2n-2R,q)|} \\
 & = 2 \sum_{R=0}^{n-1} \frac{ |\gO^{-}(2n,q)| }{ |\gO^+(2R,q)| |\gO^-(2n-2R,q)|}. 
\end{align*}
\item[(3)] The number of involutions in $\SO(2n+1,q)$ is equal to
\begin{align*}
i(\SO(2n+1,q)) & = \sum_{R=0}^{n} \frac{ |\gO(2n+1,q)| }{ |\gO^+(2R,q)| |\gO(2n+1-2R,q)|} + \sum_{R=1}^{n} \frac{ |\gO(2n+1,q)| }{ |\gO^-(2R,q)| |\gO(2n+1-2R,q)|} \\
& = \frac{1}{2} i(\gO(2n+1,q)).
\end{align*}
\end{enumerate}
\end{proposition}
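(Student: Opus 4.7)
The plan is to mimic the derivation of Proposition \ref{Oodd}, but to restrict to those conjugacy classes of involutions in $\gO^{\pm}(N,q)$ whose representatives have determinant $1$. An involution $g$ in any $\gO^{\pm}(N,q)$ with $q$ odd has minimal polynomial dividing $(x-1)(x+1)$, so $V$ decomposes as the orthogonal direct sum of the $(+1)$-eigenspace $V^+$ and the $(-1)$-eigenspace $V^-$, both non-degenerate with respect to $Q$. If $\dim V^- = k$, then $\det(g) = (-1)^k$, so $g \in \SO^{\pm}(N,q)$ iff $k = 2R$ is even. The centralizer of $g$ in $\gO_Q(V)$ is $\gO_{Q|_{V^+}}(V^+) \times \gO_{Q|_{V^-}}(V^-)$, and two involutions are conjugate in $\gO_Q(V)$ precisely when the pairs of Witt types of $(V^+, V^-)$ agree. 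By the orbit-stabilizer theorem, each such conjugacy class contributes $|\gO^{\pm}(N,q)| / (|\gO^{\epsilon_1}(2R,q)| \cdot |\gO^{\epsilon_2}(N-2R,q)|)$ involutions.

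The bookkeeping is then reduced to determining which pairs of Witt types $(\mathbf{s}_-, \mathbf{s}_+) \in \bW^2$ can arise for $(V^-, V^+)$, i.e., which pairs sum (in the Witt group) to the Witt type $\mathbf{s}$ of the ambient form. For part (1), the ambient type is $\mathbf{0}$, and with both eigenspaces even-dimensional the only possibilities are $(\mathbf{0}, \mathbf{0})$ (which always occurs, including the degenerate endpoints $R=0$ and $R=n$) and $(\mathbf{w}, \mathbf{w})$ (which requires both eigenspaces to be nonzero, giving $1 \le R \le n-1$); this yields the stated formula for $\SO^+(2n,q)$. For part (2), the ambient type is $\mathbf{w}$, and the admissible pairs are $(\mathbf{0}, \mathbf{w})$ and $(\mathbf{w}, \mathbf{0})$, with the endpoint $R=0$ contributing the first sum and $R=n$ contributing only the second, giving the two displayed sums; the equality of the two sums (and hence the single factor of $2$) then follows by re-indexing $R \mapsto n-R$ in the second sum and using that $|\gO^{+}(2R,q)||\gO^-(2n-2R,q)| = |\gO^-(2(n-R),q)||\gO^+(2n-2(n-R),q)|$.

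For part (3), the argument is parallel: the ambient form has odd dimension so $V^+$ is odd-dimensional (with a unique isomorphism class $\gO(2n+1-2R,q)$), while $V^-$ is even-dimensional and may have type $\mathbf{0}$ or $\mathbf{w}$ once $R \geq 1$, giving the two sums. For the identity $i(\SO(2n+1,q)) = \tfrac{1}{2} i(\gO(2n+1,q))$, the cleanest route is to exploit the direct product decomposition $\gO(2n+1,q) \cong \SO(2n+1,q) \times \{\pm I\}$ recalled in Section \ref{Pre}: under this isomorphism $(s, \varepsilon)^2 = (s^2, 1)$, so $(s,\varepsilon)$ is an involution iff $s^2 = 1$, and the count doubles.

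The only nontrivial step is the Witt-type compatibility claim, but this is standard: for $q$ odd the Witt group of non-degenerate quadratic forms over $\FF_q$ is $\bW \cong (\mathbb{Z}/2\mathbb{Z})^2$ (generated by the anisotropic line classes $\mathbf{1}$ and $\mathbf{d}$, with $\mathbf{w} = \mathbf{1} + \mathbf{d}$), and the parity/type of an orthogonal direct sum is the sum in this group. Everything else is arithmetic book-keeping of eigenspace dimensions and centralizers.
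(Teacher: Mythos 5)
Your argument is correct and takes essentially the same route as the paper: the paper likewise observes that an involution lies in $\SO^{\pm}(N,q)$ exactly when its $-1$-eigenspace has even dimension $k=2R$, extracts the even-$k$ terms from Proposition \ref{Oodd}, re-indexes to merge the two sums in part (2), and uses $\gO(2n+1,q)\cong \SO(2n+1,q)\times\{\pm 1\}$ for part (3); you additionally re-derive the class/centralizer count underlying Proposition \ref{Oodd}, which the paper simply cites. One small correction to your final paragraph: the Witt group of $\FF_q$ is $(\mathbb{Z}/2\mathbb{Z})^2$ only for $q\equiv 1\pmod 4$, while for $q\equiv 3\pmod 4$ it is cyclic of order $4$ (there $\mathbf{1}+\mathbf{1}=\mathbf{w}$ and $\mathbf{1}+\mathbf{d}=\mathbf{0}$); this does not affect your bookkeeping, since the only additions you actually use, namely $\mathbf{0}+\mathbf{0}=\mathbf{w}+\mathbf{w}=\mathbf{0}$ and $\mathbf{0}+\mathbf{w}=\mathbf{w}$, hold in either case.
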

\begin{proof}  In the expressions given in Proposition \ref{Oodd}, the index $k$ corresponds to the dimension of the $-1$-eigenspace of the involution.  The involution is in $\SO^{\pm}(N,q)$ if and only if $k$ is even, and so we let $k = 2R$ and take only these relevant terms from the sums in Proposition \ref{Oodd}.  In case (2), the two sums are the same after re-indexing.  In (3), the fact that $\gO(2n+1,q) \cong \SO(2n+1,q) \times \{ \pm 1 \}$ implies $i(\SO(2n+1,q)) = (1/2) i(\gO(2n+1,q))$.
\end{proof}

We also consider the number of involutions with determinant $-1$ in the orthogonal group of even dimension.

\begin{proposition} \label{SOcosetodd}
Let $q$ be odd.  The number of involutions in $\gO^{\pm}(2n,q)\setminus \SO^{\pm}(2n,q)$ is equal to 
 $$i(\gO^{\pm}(2n,q)\setminus \SO^{\pm}(2n,q))= 2\sum_{R=0}^{n-1} \frac{|\gO^{\pm}(2n,q)|}{|\gO(2R+1,q)||\gO(2n-2R-1,q)|}.$$
\end{proposition}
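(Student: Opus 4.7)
The plan is to derive this formula as a direct specialization of Proposition \ref{Oodd}. An involution $g \in \gO^{\pm}(2n,q)$ has only $\pm 1$ as eigenvalues, and if its $-1$-eigenspace has dimension $k$, then $\det(g) = (-1)^k$. Hence $g \in \gO^{\pm}(2n,q)\setminus\SO^{\pm}(2n,q)$ if and only if $k$ is odd, so I should sum the appropriate contributions from Proposition \ref{Oodd} over $k = 2R+1$ with $R = 0, 1, \ldots, n-1$.

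First I would extract from Proposition \ref{Oodd}(1) exactly the terms with $k$ odd, which gives
$$i(\gO^+(2n,q)\setminus\SO^+(2n,q)) = \sum_{R=0}^{n-1}\left( \frac{|\gO^+(2n,q)|}{|\gO^+(2R+1,q)||\gO^+(2n-2R-1,q)|} + \frac{|\gO^+(2n,q)|}{|\gO^-(2R+1,q)||\gO^-(2n-2R-1,q)|} \right),$$
and analogously extract the odd-$k$ terms from Proposition \ref{Oodd}(2) for $\gO^-(2n,q)$. Both underlying sums in Proposition \ref{Oodd}(2) contribute at every odd $k$ in the overlap $1\leq k\leq 2n-1$, and after re-indexing both reduce to the same form.

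Next, I would invoke the isomorphism $\gO^+(2R+1,q) \cong \gO^-(2R+1,q)$ noted in Section \ref{Pre}, so that $|\gO^+(2R+1,q)| = |\gO^-(2R+1,q)| = |\gO(2R+1,q)|$, and likewise for dimension $2n-2R-1$. Consequently the two summands within each $R$-term coincide, producing the factor of $2$ and the claimed expression
$$2\sum_{R=0}^{n-1} \frac{|\gO^{\pm}(2n,q)|}{|\gO(2R+1,q)||\gO(2n-2R-1,q)|}.$$

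There is no real obstacle to this proof, since all the substantive content — the classification of conjugacy classes of involutions according to the Witt types of the $\pm 1$-eigenspaces, and the identification of centralizers as direct products of orthogonal groups — is already done in Proposition \ref{Oodd}. The only point to verify is that when $k$ is odd, both of the two conjugacy classes with $\dim V_- = k$ remain present (one for each way to pair Witt types $\mathbf{1}$ and $\mathbf{d}$ on the eigenspaces so that the restricted forms sum to the correct Witt type on $V$), and that they contribute equal values because of the order coincidence $|\gO^+(m,q)| = |\gO^-(m,q)|$ for $m$ odd.
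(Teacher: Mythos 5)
Your proposal is correct and follows essentially the same route as the paper: restrict the sums of Proposition \ref{Oodd} to odd $k=2R+1$ (since $\det(g)=(-1)^k$ for an involution with $-1$-eigenspace of dimension $k$), then use $\gO^+(m,q)\cong\gO^-(m,q)$ for odd $m$ to identify the two resulting sums and obtain the factor of $2$. No gaps.
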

\begin{proof} This expression is obtained by considering when $k = 2R+1$ is odd in the sums in Proposition \ref{Oodd}.  Since $\gO^+(N,q) \cong \gO^-(N,q)$ when $N$ is odd, then the orders in the denominators of the two sums obtained are the same, and so the terms in each resulting sum are the same. 
\end{proof}

The following is needed to count involutions in the groups $\Omega^{\pm}(n,q)$ when $q$ is odd, and is a special case of \cite[Proposition 16.30]{CaEn04} (noting involutions are semisimple elements when $q$ is odd).

\begin{lemma} \label{OmCrit}
Let $q$ be odd, and let $C$ be a class of involutions in $\SO^{\pm}(N,q)$.  Let $d$ be the dimension of the $-1$-eigenspace of $C$ (where $d$ must be even).  If $Q$ is the quadratic form corresponding to $\SO^{\pm}(N,q)$, then let ${\bf s}_{-1}$ be the Witt type of the form $Q$ restricted to the $-1$-eigenspace of an involution in $C$. Define
\[v_- = \begin{cases}
\frac{d(q-1)}{4}    &    \text{ if } {\bf s}_{-1} = {\bf 0} \\
1 + \frac{d(q-1)}{4}    &    \text{ if } {\bf s}_{-1} = {\bf w}.
\end{cases}\]
Then $C$ lies in $\Omega^{\pm}(N,q)$ if and only if $v_-$ is an even integer, and $C$ defines a unique class of involutions in $\Omega^{\pm}(N,q)$ in this case.
\end{lemma}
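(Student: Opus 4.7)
The plan is to deduce the lemma from the spinor norm characterization of $\Omega^{\pm}(N,q)$. Recall that there is a surjective homomorphism $\theta : \SO^{\pm}(N,q) \to \FF_q^\times/(\FF_q^\times)^2$ with kernel $\Omega^{\pm}(N,q)$, and that for an anisotropic vector $v$ the reflection $s_v$ satisfies $\theta(s_v) \equiv Q(v) \pmod{(\FF_q^\times)^2}$.

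First I would decompose $g \in C$ as a product of reflections. With $V_\pm$ the $\pm 1$-eigenspaces of $g$ (so $\dim V_- = d$ is even because $\det g = 1$) and $v_1,\dots,v_d$ an orthogonal basis of $V_-$, one has $g = s_{v_1}\cdots s_{v_d}$, whence
$$\theta(g) \;\equiv\; \prod_{i=1}^d Q(v_i) \;\equiv\; \mathrm{disc}(Q|_{V_-}) \pmod{(\FF_q^\times)^2}.$$
A direct computation using the standard models in the definition of Witt type yields
$$\theta(g) \;\equiv\; \begin{cases} (-1)^{d/2} & \text{if } {\bf s}_{-1} = {\bf 0}, \\ (-1)^{d/2}\,\delta & \text{if } {\bf s}_{-1} = {\bf w}. \end{cases}$$
Since $-1 \in (\FF_q^\times)^2$ iff $(q-1)/2$ is even, the quantity $d(q-1)/4 = (d/2)(q-1)/2$ records the parity of the exponent of $-1$ in $\theta(g)$ modulo squares, while the extra $+1$ appearing in the ${\bf w}$ case of $v_-$ records the contribution of the non-square $\delta$. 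A short case check on $q \pmod 4$ against each Witt type then confirms $\theta(g) \in (\FF_q^\times)^2$ precisely when $v_-$ is an even integer, establishing the membership criterion.

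For the uniqueness assertion, I would invoke the standard principle that an $\SO^{\pm}(N,q)$-class $C \subseteq \Omega^{\pm}(N,q)$ remains a single $\Omega^{\pm}(N,q)$-class iff $C_{\SO^{\pm}(N,q)}(g) \not\subseteq \Omega^{\pm}(N,q)$. Since $C_{\gO^{\pm}(N,q)}(g) = \gO(V_+) \times \gO(V_-)$, and whenever both $V_\pm$ have positive dimension each admits anisotropic vectors of both square and non-square norm, one can choose a pair of reflections $(s_{u_+}, s_{u_-})$ with $\det(s_{u_+} s_{u_-}) = 1$ but $Q(u_+)Q(u_-) \notin (\FF_q^\times)^2$. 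This produces an element of the $\SO$-centralizer outside $\Omega$, so $C$ does not split; the degenerate cases $g = \pm I$ are central and handled trivially.

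The main obstacle is really just the case bookkeeping needed to match the parity of $v_-$ with the triviality of $\theta(g)$ across the four combinations of $q \pmod 4$ and Witt type, along with checking the low-dimensional edge cases for the centralizer argument. The substantive content — the spinor norm computation on a product-of-reflections decomposition, and the split-criterion for index-two subgroups — is classical, which is why the lemma can be referenced directly to \cite[Proposition 16.30]{CaEn04}.
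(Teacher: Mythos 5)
Your proof is correct, but it takes a genuinely different (and more self-contained) route than the paper, which in fact offers no argument at all: the lemma is simply recorded as a special case of the general description of semisimple classes of $\Om^{\pm}(N,q)$ inside $\SO^{\pm}(N,q)$ given in \cite[Proposition 16.30]{CaEn04}, with $v_-$ being the specialization of the invariant appearing there to an involution. Your spinor-norm computation recovers exactly that criterion: writing $g$ as a product of $d$ reflections along an orthogonal basis of $V_-$ gives $\theta(g)\equiv(-1)^{d/2}$ or $(-1)^{d/2}\delta$ according to the Witt type; since $(-1)^{d/2}$ is a square precisely when $d/2$ is even or $\tfrac{q-1}{2}$ is even, while $v_-=(d/2)\cdot\tfrac{q-1}{2}$ (plus $1$ in the $\mathbf{w}$ case), the parity of $v_-$ matches triviality of the spinor norm in all four combinations of $q \bmod 4$ and $\mathbf{s}_{-1}$, consistent with how Propositions \ref{OmOdd1mod4} and \ref{OmOdd3mod4} use the lemma. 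The non-splitting argument via $C_{\SO^{\pm}(N,q)}(g)\not\subseteq\Om^{\pm}(N,q)$ is also sound; the one phrase to tighten is ``each admits anisotropic vectors of both square and non-square norm,'' which fails when an eigenspace is $1$-dimensional (e.g.\ $N$ odd, $d=N-1$), but the argument survives because the other eigenspace then has dimension at least $2$, and a nondegenerate quadratic form in at least two variables over $\FF_q$ is universal, so a pair $(u_+,u_-)$ with $Q(u_+)Q(u_-)$ a non-square still exists. What your approach buys is an elementary proof from first principles, checkable without outside input; what the paper's citation buys is brevity and uniformity with the general theory of semisimple classes in \cite{CaEn04}, which covers arbitrary semisimple elements rather than just involutions.
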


We now have the following.

\begin{proposition} \label{OmOdd1mod4}
Let $q \equiv 1($mod $4)$.
\begin{enumerate}
\item[(1)] The number of involutions in $\Omega^+(2n,q)$ is equal to
  $$ i(\Om^+(2n,q)) = \sum_{R=0}^{n} \frac{|\gO^+(2n,q)|}{|\gO^+(2R,q)||\gO^+(2n-2R,q)|}.$$
\item[(2)] The number of involutions in $\Omega^-(2n,q)$ is equal to
  $$ i(\Om^-(2n,q)) =  \sum_{R=0}^{n-1} \frac{|\gO^-(2n,q)|}{|\gO^+(2R,q)||\gO^-(2n-2R,q)|}.$$
\item[(3)] The number of involutions in $\Omega(2n+1,q)$ is equal to
  $$ i(\Om(2n+1,q)) = \sum_{R=0}^{n} \frac{|\gO(2n+1,q)|}{|\gO^+(2R,q)||\gO(2n+1-2R,q)|}.$$
\end{enumerate}
\end{proposition}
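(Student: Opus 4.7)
The plan is to derive these formulas from Proposition~\ref{SOodd} by restricting its sums to those $\SO^{\pm}$-conjugacy classes of involutions that lie in $\Om^{\pm}$, using the parity criterion supplied by Lemma~\ref{OmCrit}.

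First I would observe that each term in the sums of Proposition~\ref{SOodd} is the size of a single $\gO^{\pm}$-conjugacy class of involutions, indexed by the dimension $2R$ of the $-1$-eigenspace and by the Witt type $\mathbf{s}_{-1} \in \{\mathbf{0}, \mathbf{w}\}$ of the restriction of $Q$ to that eigenspace: the two summands in each part distinguish $\mathbf{s}_{-1} = \mathbf{0}$ (where the centralizer contains a $\gO^+(2R,q)$ factor) from $\mathbf{s}_{-1} = \mathbf{w}$ (where it contains a $\gO^-(2R,q)$ factor). Since such a centralizer always contains elements of determinant $-1$, the class does not split when passed from $\gO^{\pm}$ down to $\SO^{\pm}$, and Lemma~\ref{OmCrit} then asserts that each such class either lies entirely in $\Om^{\pm}$ as a single $\Om^{\pm}$-conjugacy class of the same size, or is disjoint from $\Om^{\pm}$. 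Hence $i(\Om^{\pm})$ is obtained from Proposition~\ref{SOodd} by retaining exactly those summands whose class passes the parity test of Lemma~\ref{OmCrit}.

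Next I would specialize Lemma~\ref{OmCrit} with $d = 2R$: by definition $v_- = R(q-1)/2$ when $\mathbf{s}_{-1} = \mathbf{0}$, and $v_- = 1 + R(q-1)/2$ when $\mathbf{s}_{-1} = \mathbf{w}$. Under the hypothesis $q \equiv 1 \pmod{4}$, the integer $(q-1)/2$ is even, so $v_-$ is even precisely when $\mathbf{s}_{-1} = \mathbf{0}$, regardless of $R$. Consequently the classes that survive are those whose $-1$-eigenspace is hyperbolic, and these are precisely the first summands in parts (1), (2), and (3) of Proposition~\ref{SOodd}; dropping the second summand in each part produces the three displayed formulas.

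The only step requiring any care is the identification of the first summand of each part of Proposition~\ref{SOodd} with the Witt type $\mathbf{s}_{-1} = \mathbf{0}$ case --- equivalently, the matching of each orthogonal-group factor in the centralizer with the Witt type of the eigenspace on which it acts. This is read off from the orthogonal decomposition $V = V_{+1} \oplus V_{-1}$ determined by the involution, using that the Witt types of the two restrictions of $Q$ combine to give the Witt type of $Q$ on $V$; the same correspondence already underlies the derivation of Propositions~\ref{Oodd} and~\ref{SOodd}, and so I do not foresee any substantive obstacle here.
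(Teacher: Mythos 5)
Your proposal is correct and follows essentially the same route as the paper: both apply Lemma \ref{OmCrit} with $d=2R$, observe that $q \equiv 1 \pmod 4$ forces $v_-$ to be even exactly when $\mathbf{s}_{-1} = \mathbf{0}$, and retain only the corresponding (first) summands from the involution counts of Propositions \ref{Oodd}/\ref{SOodd}. Your extra remark that each $\gO^{\pm}$-class of involutions does not split in $\SO^{\pm}$ (because the centralizer contains determinant $-1$ elements) is a correct and welcome point of care that the paper leaves implicit.
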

\begin{proof} Applying Lemma \ref{OmCrit}, let $V_{-1}$ be the $-1$-eigenspace of some involution, where $\dim(V_{-1}) = d = 2R$.  Since $q \equiv 1($mod $4)$ and $d$ is even, then $v_-$ is always even if ${\bf s}_{-1} = {\bf 0}$ and $v_-$ is always odd if ${\bf s}_{-1}={\bf w}$.  Hence an involution lies in $\Omega(n,q)$ if and only if the quadratic form restricted to $V_{-1}$ has Witt type ${\bf 0}$, corresponding to the orthogonal group $\gO^+(2R,q)$. Taking the corresponding terms in the summations of Proposition \ref{Oodd}, we obtain the expressions as claimed.
\end{proof}

\begin{proposition} \label{OmOdd3mod4}
Let $q \equiv 3($mod $4)$. 
\begin{enumerate}
 \item[(1)] The number of involutions in $\Omega^+(2n,q)$ is equal to
  $$i( \Om^+(2n,q)) = \sum_{\substack{R=0 \\ R \text{ even}}}^{n} \frac{|\gO^+(2n,q)|}{|\gO^+(2R,q)||\gO^+(2n-2R,q)|} + \sum_{\substack{R=1 \\ R \text{ odd}}}^{n-1} \frac{|\gO^+(2n,q)|}{|\gO^-(2R,q)||\gO^-(2n-2R,q)|}.$$
  \item[(2)] The number of involutions in $\Omega^-(2n,q)$ is equal to
  $$i( \Om^-(2n,q)) = \sum_{\substack{R=0\\ R \text{ even}}}^{n-1} \frac{|\gO^-(2n,q)|}{|\gO^+(2R,q)||\gO^-(2n-2R,q)|} + \sum_{\substack{R=1 \\ R \text{ odd}}}^{n} \frac{|\gO^-(2n,q)|}{|\gO^-(2R,q)||\gO^+(2n-2R,q)|}.$$
  \item[(3)] The number of involutions in $\Omega(2n+1,q)$ is equal to
  $$ i(\Om(2n+1,q)) = \sum_{\substack{R=0 \\ R \text{ even}}}^{n} \frac{|\gO(2n+1,q)|}{|\gO^+(2R,q)||\gO(2n+1-2R,q)|} + \sum_{\substack{R=1 \\ R \text{ odd}}}^{n} \frac{|\gO(2n+1,q)|}{|\gO^-(2R,q)||\gO(2n+1-2R,q)|}.$$
  \end{enumerate}
\end{proposition}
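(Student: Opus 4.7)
The plan is to mimic the proof of Proposition \ref{OmOdd1mod4}, starting from the involution counts in Proposition \ref{SOodd} and using Lemma \ref{OmCrit} to decide which of these involutions lie in $\Om^{\pm}$. The only substantive change is the parity analysis of the integer $v_-$ from Lemma \ref{OmCrit}, which flips from the case $q \equiv 1 \pmod 4$ to the case $q \equiv 3 \pmod 4$.

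First I would fix an involution $g \in \SO^{\pm}(N,q)$ whose $-1$-eigenspace $V_{-1}$ has dimension $d = 2R$, and compute $v_-$. Because $q \equiv 3 \pmod 4$, the quantity $(q-1)/2$ is odd, so
\[
v_- \;=\; \tfrac{R(q-1)}{2} \pmod 2 \quad \text{if } {\bf s}_{-1} = {\bf 0}, \qquad v_- \;=\; 1 + \tfrac{R(q-1)}{2} \pmod 2 \quad \text{if } {\bf s}_{-1} = {\bf w}.
\]
Hence $v_-$ is even precisely when $R$ is even in the ${\bf s}_{-1} = {\bf 0}$ case and precisely when $R$ is odd in the ${\bf s}_{-1} = {\bf w}$ case. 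By Lemma \ref{OmCrit}, these are exactly the classes of involutions of $\SO^{\pm}(N,q)$ that descend to classes in $\Om^{\pm}(N,q)$.

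Second, I would match each summand in Proposition \ref{SOodd} with a Witt type. The first sum in each of the three parts of Proposition \ref{SOodd} corresponds to ${\bf s}_{-1} = {\bf 0}$ (the denominator contains $|\gO^+(2R,q)|$), and the second sum corresponds to ${\bf s}_{-1} = {\bf w}$ (the denominator contains $|\gO^-(2R,q)|$). Restricting each of the two sums to the $R$-values dictated by the parity analysis above, and using the fact that under Lemma \ref{OmCrit} each surviving class of $\SO^{\pm}(N,q)$ descends to a unique class of $\Om^{\pm}(N,q)$ of the same cardinality, yields the formulas in (1), (2), and (3). For part (3), this applies directly since $\Om(2n+1,q) \subseteq \SO(2n+1,q)$ and an involution of $\SO(2n+1,q)$ lies in $\Om(2n+1,q)$ iff its class satisfies the criterion of Lemma \ref{OmCrit}.

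No step here is genuinely difficult; the only place to be careful is the bookkeeping that the Witt type of the restriction to $V_{-1}$ in the summands of Proposition \ref{SOodd} is identified correctly (i.e.\ reading off ${\bf s}_{-1}$ from whether $|\gO^+(2R,q)|$ or $|\gO^-(2R,q)|$ appears in the denominator), since in part (2) the two sums already agree term-by-term after re-indexing and one must not double count. Keeping the Witt type of the $-1$-eigenspace as the organizing index throughout removes any such risk, and the proposition follows.
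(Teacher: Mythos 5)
Your proposal is correct and takes essentially the same approach as the paper: apply Lemma \ref{OmCrit}, observe that for $q \equiv 3 \pmod 4$ the parity of $v_-$ equals that of $R$ (shifted by one in the ${\bf s}_{-1}={\bf w}$ case), and keep only the terms of the eigenspace-dimension sums whose first denominator factor matches the admissible Witt type. Your added caution about not using the collapsed single-sum form of Proposition \ref{SOodd}(2) is exactly the right bookkeeping.
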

\begin{proof} We use the same notation and idea as in the proof of Proposition \ref{OmOdd1mod4}, and apply Lemma \ref{OmCrit}.  Since $q \equiv 3($mod $4)$ and $d=2R$, then $v_-$ is even either when $R$ is even and ${\bf s}_{-1} = {\bf 0}$, or when $R$ is odd and ${\bf s}_{-1} = {\bf w}$.  These situations apply to the terms from Proposition \ref{Oodd} when either $R$ is even and the first factor in the denominator is $|\gO^+(2R,q)|$ (corresponding to ${\bf s}_{-1} = {\bf 0}$), or when $R$ is odd and the first factor in the denominator is $|\gO^-(2R,q)|$ (when ${\bf s}_{-1} ={\bf w}$), respectively.  The result follows.
\end{proof}

\subsection{Groups over fields of characteristic two}

We now deal with sets of involutions when $q$ is even.  The next result is \cite[Theorems 5.6]{FuGuSt16}.

\begin{proposition} \label{Oeven}
Let $q$ be even, and let
\begin{align*}
A_k^{\pm} & = q^{k(k-1)/2 + k(2n-2k)} |\Sp(k,q)|\,|\gO^{\pm}(2n-2k,q)|,\\
B_k & = 2q^{k(k+1)/2 + (k-1)(2n-2k)-1} q^{k-1}|\Sp(k-2,q)|\,|\Sp(2n-2k,q)|,\\
C_k & = 2q^{k(k-1)/2 + (k-1)(2n-2k)} |\Sp(k-1,q)|\,|\Sp(2n-2k,q)|.
\end{align*}
\begin{enumerate}
\item[(1)] The number of involutions in $\gO^+(2n,q)$ is equal to
$$i(\gO^+(2n,q))= \sum_{k=0 \atop{k \text{ even}}}^n \frac{|\gO^+(2n,q)|}{A^+_k} + \sum_{k=2 \atop{k \text{ even}}}^n \frac{|\gO^+(2n,q)|}{B_k} + \sum_{k=1 \atop{k \text{ odd}}}^n \frac{|\gO^+(2n,q)|}{C_k}.$$

\item[(2)] The number of involutions in $\gO^-(2n,q)$ is equal to
$$ i(\gO^-(2n,q)) = \sum_{k=0 \atop{k \text{ even}}}^{n-1} \frac{|\gO^-(2n,q)|}{A^-_k} + \sum_{k=2 \atop{k \text{ even}}}^n \frac{|\gO^-(2n,q)|}{B_k} + \sum_{k=1 \atop{k \text{ odd}}}^n \frac{|\gO^-(2n,q)|}{C_k},$$
\end{enumerate}
Note that in the $A_k$ sum in part (2), $r$ only ranges from $0$ to $n-1$, and that the values of $B_k$ and $C_k$ are the same in parts (1) and (2).
\end{proposition}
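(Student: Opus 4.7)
The plan is to enumerate involutions in $\gO^{\pm}(2n,q)$ by classifying their conjugacy classes and then summing class sizes $|G|/|C_G(g)|$. Since $q$ is even, an involution $g$ satisfies $(1+g)^2 = 0$, so $1$ is the only eigenvalue of $g$ and every Jordan block has size at most $2$. Writing $k = \mathrm{rank}(1+g)$, every such unipotent has exactly $k$ Jordan blocks of size $2$ and $2n-2k$ of size $1$, and the task reduces to counting the $\gO^{\pm}(2n,q)$-orbits of such unipotents preserving the fixed quadratic form $Q$.

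The first main step is to invoke the Aschbacher--Seitz classification of involutions in orthogonal groups in characteristic $2$. For each admissible $k$ the conjugacy classes fall into up to three families, traditionally labeled $a_k$, $b_k$, $c_k$. The $a_k$ classes exist for even $k$ and correspond to involutions whose image $W = (1+g)V$ is a totally singular subspace on which $Q$ itself, not only the associated alternating form, restricts to zero; their centralizers have a Levi-type factor $\Sp(k,q) \times \gO^{\pm}(2n-2k,q)$ acting naturally on $W$ and on the non-degenerate quotient $W^{\perp}/W$, with a unipotent radical of order $q^{k(k-1)/2 + k(2n-2k)}$, giving the denominator $A_k^{\pm}$. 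The $b_k$ classes also exist for even $k$ but correspond to involutions for which $Q$ does not vanish on $W$ even though the associated alternating form does; the Levi drops to $\Sp(k-2,q) \times \Sp(2n-2k,q)$, a factor of $2$ appears from an outer piece of the centralizer, and the unipotent part is rearranged to give $B_k$. The $c_k$ classes occur for odd $k$, have Levi factor $\Sp(k-1,q) \times \Sp(2n-2k,q)$ together with an outer $\mathbb{Z}/2$, and yield $C_k$.

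The second step is to sort out which classes contribute to $\gO^+(2n,q)$ versus $\gO^-(2n,q)$. The $a_k$ classes split according to the Witt type of $Q$ on the non-degenerate complement $W^{\perp}/W$, which accounts for the $\pm$ in $A_k^{\pm}$; the truncated upper index in part (2) reflects the fact that when $k=n$ the complement is $0$-dimensional, so only the $+$-type ambient group supports an $a_n$ class (recall that $|\gO^-(0,q)|$ is left undefined). By contrast, the $b_k$ and $c_k$ centralizer data involves only symplectic groups, which do not distinguish $+$ from $-$, so these classes appear identically in $\gO^+(2n,q)$ and $\gO^-(2n,q)$, explaining the parenthetical remark. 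Summing $|G|/|C_G(g)|$ over all classes yields the claimed formulas.

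The main technical obstacle is the centralizer computation for the $b_k$ family: here the interplay between the quadratic form $Q$ and its associated alternating form is delicate, and the combinatorial bookkeeping that produces both the extra $q^{k-1}$ factor and the drop from $\Sp(k,q)$ to $\Sp(k-2,q)$ requires choosing a clean matrix representative of the class and then writing out the centralizer conditions explicitly in coordinates, identifying the unipotent radical and Levi factor by exhibiting generators.
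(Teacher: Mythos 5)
Your approach is sound, but you should first be aware that the paper does not prove this proposition at all: it is quoted verbatim from Fulman--Guralnick--Stanton \cite[Theorem 5.6]{FuGuSt16}, so there is no in-paper argument to match. The paper's only gloss on the cited proof (in the proof of Proposition \ref{Omeven}) is that every involution of $\gO^{\pm}(2n,q)$, $q$ even, is conjugated to a block normal form $\left[\begin{smallmatrix} I & 0 & h \\ 0 & I & 0 \\ 0 & 0 & I\end{smallmatrix}\right]$ and the count is organized by $k = \mathrm{rank}(h)$; your class-by-class summation of $|G|/|C_G(g)|$ over the Aschbacher--Seitz families is the standard equivalent route, and you correctly match the three families to the three denominators, get the parities of $k$ right, and correctly explain why the $A_k^-$ sum stops at $k=n-1$ (a totally singular subspace of an $\gO^-$-space has dimension at most $n-1$, consistent with the convention $1/|\gO^-(0,q)|=0$). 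Two caveats. First, the entire content of the proposition \emph{is} the list of centralizer orders $A_k^{\pm}$, $B_k$, $C_k$, and your proposal asserts these rather than derives them, explicitly deferring the $B_k$ case as ``delicate''; as written this is a correct roadmap that locates all the work, not a proof, and the deferred computation (the unipotent radical order, the drop to $\Sp(k-2,q)$, the extra $q^{k-1}$ and the factor $2$) is precisely where an error could hide. Second, a small imprecision: within a fixed $\gO^{\epsilon}(2n,q)$ the Witt type of $W^{\perp}/W$ for a totally singular $W$ is forced to equal $\epsilon$, so the $a_k$ involutions do not ``split'' into classes according to that type --- the $\pm$ in $A_k^{\pm}$ merely records the type of the ambient group. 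Neither caveat invalidates the strategy, which would indeed reproduce the stated formulas once the centralizers are computed.
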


For the next result, we use Lemma \ref{OmCritqeven} and the ideas used to prove Proposition \ref{Oeven}.  We note that these results can also be concluded from \cite[Section 8]{Dy71}.

\begin{proposition} \label{Omeven}
Let $q$ be even, and let $A_k^{\pm}$, $B_k$, and $C_k$ be as in Proposition \ref{Oeven}.
\begin{enumerate}
\item[(1)] The number of involutions in $\Om^{+}(2n,q)$ is equal to
$$ i(\Om^+(2n,q)) =  \sum_{k=0 \atop{k \text{ even}}}^n \frac{|\gO^+(2n,q)|}{A_k^+} + \sum_{k=2 \atop{k \text{ even}}}^n \frac{|\gO^+(2n,q)|}{B_k}.$$
\item[(2)] The number of involutions in $\Om^{-}(2n,q)$ is equal to
$$ i(\Om^-(2n,q))= \sum_{k=0 \atop{k \text{ even}}}^{n-1} \frac{|\gO^-(2n,q)|}{A_k^-} + \sum_{k=2 \atop{k \text{ even}}}^n \frac{|\gO^-(2n,q)|}{B_k}. $$
\item[(3)] The number of involutions in $\gO^{\pm}(2n,q) \setminus \Om^{\pm}(2n,q)$ is
$$ i(\gO^{\pm}(2n,q) \setminus \Om^{\pm}(2n,q)) = \sum_{k=1 \atop{k \text{ odd}}}^n \frac{|\gO^{\pm}(2n,q)|}{C_k}.$$
\end{enumerate}
\end{proposition}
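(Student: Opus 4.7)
The plan is to revisit the derivation of Proposition \ref{Oeven} and observe that, in each of its three summands, the index $k$ equals $\mathrm{rank}(1+g)$ for an involution $g$ representing the corresponding conjugacy class; the proposition then follows at once from Lemma \ref{OmCritqeven}. The $\gO^{\pm}(2n,q)$-conjugacy classes of involutions (for $q$ even) partition into three families, as described in \cite[Section 8]{Dy71} and reflected in Proposition \ref{Oeven}: the ``$a$-type'' classes counted by the $A_k^{\pm}$ terms (image of $1+g$ totally singular, $k$ even), the ``$b$-type'' classes counted by the $B_k$ terms (image of $1+g$ not totally singular but of even rank $k$), and the ``$c$-type'' classes counted by the $C_k$ terms (where a non-degenerate one-dimensional contribution forces $k$ to be odd). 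In all three cases $k=\mathrm{rank}(1+g)$, so the parity restrictions on $k$ already built into Proposition \ref{Oeven} line up exactly with Lemma \ref{OmCritqeven}.

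Granted this identification, the rest is bookkeeping. By Lemma \ref{OmCritqeven}, an involution $g \in \gO^{\pm}(2n,q)$ lies in $\Om^{\pm}(2n,q)$ if and only if $\mathrm{rank}(1+g)$ is even. Extracting from Proposition \ref{Oeven}(1) the two summands with $k$ even (namely the $A_k^+$ and $B_k$ sums) gives the stated expression for $i(\Om^+(2n,q))$ in part (1); the same extraction from Proposition \ref{Oeven}(2) gives part (2). The leftover summand in each case is precisely the $C_k$ sum, whose $k$ ranges only over odd values, so it counts exactly the involutions lying in $\gO^{\pm}(2n,q)\setminus\Om^{\pm}(2n,q)$. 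Since, as already noted after Proposition \ref{Oeven}, the quantity $C_k$ does not depend on the sign $\pm$, the $+$ and $-$ cases of part (3) take the uniform form displayed, differing only in the group order appearing in the numerator.

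The main obstacle is justifying the identification $k = \mathrm{rank}(1+g)$ inside each of the three families, since Proposition \ref{Oeven} is stated without making this explicit. One verifies it by unpacking the derivation in \cite{FuGuSt16}: the centralizer orders $A_k^{\pm}$, $B_k$, and $C_k$ are obtained by fixing a $k$-dimensional image $(1+g)V$ and computing the stabilizer of this configuration together with the bilinear form induced by $Q$ on $V/(1+g)V$. The three formulas arise from the three possible behaviors of $Q$ on $(1+g)V$, and in each case the parity of $k$ is forced by whether $(1+g)V$ is totally singular or not. Alternatively, one can read this classification directly from Dye's treatment in \cite[Section 8]{Dy71}. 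Once the identification is in hand, no further computation is needed.
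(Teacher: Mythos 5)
Your proposal is correct and follows essentially the same route as the paper: the paper's proof likewise observes (via the block form of an involution $g$ used in \cite[Theorem 5.6]{FuGuSt16}) that the index $k$ in Proposition \ref{Oeven} equals $\mathrm{rank}(1+g)$, and then applies Lemma \ref{OmCritqeven} to keep the even-$k$ sums for parts (1) and (2) and the odd-$k$ sum for part (3).
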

\begin{proof} In the proof of Proposition \ref{Oeven} given in \cite[Theorem 5.6]{FuGuSt16}, the authors prove that any involution in $\gO^{\pm}(2n,q)$ with $q$ even is conjugate to an element $g$, given in block form as
$$ g = \left[ \begin{array}{ccc} I & 0 & h \\ 0 & I & 0 \\ 0 & 0 & I \end{array} \right].$$
If we define $k = \mathrm{rank}(h)$, then the expressions in Proposition \ref{Oeven} count the involutions by considering the possible values for $k$.  By Lemma \ref{OmCritqeven}, the involution $g$ is in $\Om^{\pm}(2n,q)$ if and only if $\mathrm{rank}(1+g) = \mathrm{rank}(h)$ is even.  So, to count the involutions in $\Om^{\pm}(2n,q)$, we only include the sums with $k$ even from Proposition \ref{Oeven}, which gives (1) and (2).  The expression in (3) is the remaining sum.
\end{proof}

\section{Results on Indicators} \label{ResIndicators}

\subsection{$\SO^{\pm}(2n, q)$ with $q$ odd}

It is a result of Gow \cite{Go85} that for any $n \geq 1$ and $q$ odd, the group $\gO^{\pm}(n,q)$ is totally orthogonal, as is the group $\SO^{\pm}(4m,q)$ for $q$ odd and $m \geq 1$.  We expand these results to certain twisted indicators for the groups $\SO^{\pm}(4m+2, q)$ with $q$ odd.  We first need the following result, which essentially follows from the results of Wonenburger \cite{Wo66} along with the description of conjugacy classes in finite orthogonal groups due to Wall \cite{Wa62}.  A slightly stronger version of the following result is proved in \cite[Lemma 4.7]{ScVi16}, and statement (i) is proved by Gal't \cite[Theorem 1]{Ga10}, but the statements in the proposition below are also implied by the more general results of Kn\"{u}ppel  and Thomssen \cite{KnTh98}.

\begin{proposition} \label{detSO}
Let $q$ be odd, $n \geq 1$, $G = \gO^{\pm}(2n,q)$, and $H = \SO^{\pm}(2n, q)$.  
\begin{enumerate}
\item[(i)] Let $n = 2m$, $m \geq 1$, so $H = \SO^{\pm}(4m, q)$.  Then $H$ is a strongly real group.
\item[(ii)] Let $n = 2m+1$, $m \geq 0$, so $G = \gO^{\pm}(4m+2, q)$ and $H = \SO^{\pm}(4m+2, q)$.  For any $h \in H$, there exists an element $t \in G \setminus H$ such that $t h t^{-1} = h^{-1}$ and $t^2 = 1$.
\end{enumerate}
\end{proposition}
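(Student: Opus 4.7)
The plan is to derive both statements from the fine structure of $\gO$-conjugacy as described by Wall \cite{Wa62}, together with Wonenburger's theorem \cite{Wo66} which shows that every element of $\gO_Q(V)$ is a product of two involutions in $\gO_Q(V)$. The latter immediately gives that $\gO^{\pm}(2n,q)$ is strongly real, so for any $h \in \SO^{\pm}(2n,q)$ there exists at least one involution $t \in \gO^{\pm}(2n,q)$ with $tht^{-1}=h^{-1}$; the content of the proposition is that the determinant of $t$ can be chosen equal to $+1$ when $\dim V = 4m$ and equal to $-1$ when $\dim V = 4m+2$.

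To control the determinant, the plan is to decompose the ambient quadratic space $V$ orthogonally into $h$-invariant summands indexed by the elementary divisors of $h$: hyperbolic-type summands attached to pairs $\{p(x), p^*(x)\}$ of non-self-reciprocal irreducible factors; self-reciprocal summands attached to irreducibles $p(x) = p^*(x)$ different from $x \pm 1$; and the primary components at the eigenvalues $\pm 1$. On each summand one constructs an involution that inverts $h$ on that summand, and the global $t$ is taken to be the orthogonal direct sum of these pieces. On the $\{p, p^*\}$-summands the inverting involution can be chosen with either determinant, so the problem reduces to computing the determinants contributed by the remaining summands, which depend only on their dimensions and Witt types.

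The main step is then a parity computation on the forced summands: one checks that when $\dim V = 4m$ the product of the forced determinants is $+1$, so $t$ can be chosen in $\SO^{\pm}(4m,q)$, and that when $\dim V = 4m+2$ the product is $-1$, so $t$ lies in $\gO^{\pm}(4m+2,q) \setminus \SO^{\pm}(4m+2,q)$. For part (i), this parity statement is the content of \cite[Theorem 1]{Ga10}, and for part (ii), a slightly stronger version of the required fact is proved in \cite[Lemma 4.7]{ScVi16}; both are also special cases of the general inversion results of Kn\"{u}ppel and Thomssen \cite{KnTh98}. The plan is therefore to organize the argument as a direct citation of these references after establishing the decomposition and noting that the hyperbolic summands give free determinant choices.

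The main obstacle, were one to reprove the parity claim from scratch rather than cite it, is the careful bookkeeping on the primary components at the eigenvalues $\pm 1$ and on the self-reciprocal summands, where the determinant of any inverting involution is constrained by the Witt type of the restricted quadratic form rather than being free. The uniform statement that these constraints combine to produce sign $(-1)^{n+1}$ in dimension $2n$ is what separates cases (i) and (ii) and is the reason the two conclusions have their distinct forms.
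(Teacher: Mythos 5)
Your proposal is correct and matches the paper, which likewise gives no from-scratch argument but derives the proposition from Wonenburger \cite{Wo66} and Wall \cite{Wa62}, citing \cite[Theorem 1]{Ga10} for part (i), \cite[Lemma 4.7]{ScVi16} for a slightly stronger version, and \cite{KnTh98} as covering both statements. Your sketch of the orthogonal decomposition and the determinant parity computation is a fair outline of what those references actually establish, but the paper itself stops at the citations just as you do.
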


We apply Proposition \ref{detSO} to prove the following.  The result that $\SO^{\pm}(4m,q)$ is totally orthogonal, already proved by Gow, is included to show how it fits into the general framework.

\begin{theorem} \label{SOtwist} 
Let $q$ be odd, $n \geq 1$, $G = \gO^{\pm}(2n,q)$, and $H = \SO^{\pm}(2n, q)$.  Fix an involution $s \in G \setminus H$, and define $\iota$ on $H$ by ${^\iota h} = shs^{-1}$.
\begin{enumerate}
\item[(i)] Let $n = 2m$ so $H = \SO^{\pm}(4m, q)$.  Then $\ep(\psi)=1$ and $\epi(\psi) \geq 0$ for all $\psi \in \Irr(H)$.
\item[(ii)] Let $n = 2m+1$ so $H = \SO^{\pm}(4m+2,q)$.  Then $\epi(\psi) = 1$ and $\ep(\psi) \geq 0$ for all $\psi \in \Irr(H)$.  In particular, the character degree sum $\sum_{\psi \in \Irr(H)} \psi(1)$ is given by the expression in Proposition \ref{SOcosetodd}.
\end{enumerate}
\end{theorem}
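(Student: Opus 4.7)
My plan is to derive both parts of Theorem \ref{SOtwist} by applying Lemma \ref{index2} with $G = \gO^{\pm}(2n,q)$ and $H = \SO^{\pm}(2n,q)$, leveraging Gow's theorem that every $\gO^{\pm}(n,q)$ with $q$ odd is totally orthogonal together with Proposition \ref{detSO}. For the setup I would fix once and for all an involution $s \in G \setminus H$; since $q$ is odd, any reflection in a non-isotropic vector is an order-$2$ element of determinant $-1$, so such an $s$ exists and $G = H\langle s\rangle$.

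For part (i), where $n = 2m$, I would appeal to Lemma \ref{index2}(i). The two hypotheses are immediate: $G = \gO^{\pm}(4m,q)$ is totally orthogonal by Gow, and $H = \SO^{\pm}(4m,q)$ is strongly real---hence real---by Proposition \ref{detSO}(i). Lemma \ref{index2}(i) then delivers $\ep(\psi)=1$ and $\epi(\psi) \geq 0$ for every $\psi \in \Irr(H)$.

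For part (ii), where $n = 2m+1$, I would instead apply Lemma \ref{index2}(ii). The first hypothesis is again Gow's theorem. The substantive check is that ${}^\iota h = shs^{-1}$ is $H$-conjugate to $h^{-1}$ for every $h \in H$. Proposition \ref{detSO}(ii) supplies, for each such $h$, an element $t \in G \setminus H$ with $t^2 = 1$ and $tht^{-1} = h^{-1}$; since $[G:H]=2$ and both $s,t$ lie in the nontrivial coset, $u := ts^{-1}$ belongs to $H$, and then $u \cdot {}^\iota h \cdot u^{-1} = (us)h(us)^{-1} = tht^{-1} = h^{-1}$, which is the desired $H$-conjugation. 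Lemma \ref{index2}(ii) now yields $\epi(\psi)=1$ and $\ep(\psi) \geq 0$ for every $\psi \in \Irr(H)$. For the promised character degree sum identity, Lemma \ref{Allepi1} combined with $\epi(\psi)=1$ for all $\psi$ gives
$$ \sum_{\psi \in \Irr(H)} \psi(1) = \#\{\, h \in H : {}^\iota h = h^{-1}\,\}, $$
and a direct calculation using $s^2 = 1$ shows that $shs^{-1} = h^{-1}$ is equivalent to $(sh)^2 = 1$. Under the bijection $h \mapsto sh$ from $H$ onto $sH = G\setminus H$, this count becomes $i(\gO^{\pm}(4m+2,q) \setminus \SO^{\pm}(4m+2,q))$, which is precisely the quantity evaluated in Proposition \ref{SOcosetodd}.

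The only real obstacle is the translation step in part (ii): Proposition \ref{detSO}(ii) only guarantees the existence of \emph{some} $t \in G \setminus H$ inverting $h$, whereas Lemma \ref{index2}(ii) needs $H$-conjugation via the fixed outer involution $s$ chosen in the setup. The index-$2$ trick $u = ts^{-1}$ bridges this cleanly, after which the proof is a straightforward assembly of results already in hand.
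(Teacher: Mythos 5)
Your proposal is correct and follows essentially the same route as the paper: part (i) via Gow's total orthogonality of $\gO^{\pm}(4m,q)$, Proposition \ref{detSO}(i), and Lemma \ref{index2}(i); part (ii) via Lemma \ref{index2}(ii) after verifying the $H$-conjugacy of ${^\iota h}$ to $h^{-1}$, then Lemma \ref{Allepi1} and the bijection $h \mapsto sh$ onto $G \setminus H$. The only cosmetic difference is that the paper applies Proposition \ref{detSO}(ii) to ${^\iota h}$ and conjugates by $s$ to land in $H$, whereas you apply it to $h$ and correct with $u = ts^{-1}$; these are the same index-$2$ manipulation.
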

\begin{proof} As already mentioned, if $q$ is odd, $n \geq 1$, and $G = \gO^{\pm}(2n,q)$, it is a result of Gow \cite[Theorem 1]{Go85} that $\ep(\chi) =1$ for all $\chi \in \Irr(G)$.  In case (i), since $H$ is a real group (in fact, strongly real) by Proposition \ref{detSO}(i), and $G = H\langle s \rangle$, then Lemma \ref{index2}(i) implies statement (i).

To prove statement (ii), it suffices to prove that for all $h \in H$, ${^\iota h}$ is $H$-conjugate to $h^{-1}$, by Lemma \ref{index2}(ii).  Given $h \in H$, we apply Proposition \ref{detSO} to ${^\iota h} \in H$, so there is some $t \in G \setminus H$ such that $t^2 = 1$ and $t({^\iota h})t = {^\iota h^{-1}} = s h^{-1} s$.  Conjugating by $s$ on both sides yields $(st) {^\iota h} (st)^{-1} = h^{-1}$, and since $s, t \in G \setminus H$, we have $st \in H$ and ${^\iota h}$ is $H$-conjugate to $h^{-1}$.  By Lemma \ref{Allepi1}, we have
$$ \sum_{\psi \in \Irr(H)} \psi(1) = \#\{ h \in H \, \mid \, s h s^{-1} = h^{-1} \}.$$
Note that since $s \in G \setminus H$, we have $sh \in G \setminus H$, and $shs^{-1} = h^{-1}$ if and only if $(sh)^2 = 1$.  That is, the character degree sum is equal to the number of involutions in $G \setminus H$, which is given by Proposition \ref{SOcosetodd}.
\end{proof}

\subsection{$\Sp(2n, q)$, $\gO^{\pm}(2n, q)$ and $\Om^{\pm}(2n, q)$ with $q$ even}

Using direct computation, we obtain the following result for the groups $\Sp(2n,q)$ with $n$ small.

\begin{theorem} \label{Spqeven}
Let $q$ be even and $n \leq 8$.  Then $G=\Sp(2n,q)$ is totally orthogonal.  In particular, Theorem \ref{simple} holds for these groups.
\end{theorem}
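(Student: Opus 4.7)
The plan is to apply Lemma \ref{Allepi1} with the trivial automorphism $\iota = 1$, which reduces total orthogonality of $G = \Sp(2n,q)$ to verifying the single numerical identity
$$\sum_{\chi \in \Irr(G)} \chi(1) = i(G).$$
For each fixed $n$, both sides are polynomials in $q$ (with $q$ ranging over even prime powers), and the strategy is to compute each side as an explicit polynomial and check equality for $n = 1, 2, \ldots, 8$.

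For the right-hand side, I would use the isomorphism $\Sp(2n,q) \cong \gO(2n+1,q)$ valid in even characteristic, and adapt the centralizer-counting argument of Fulman--Guralnick--Stanton behind Proposition \ref{Oeven} to the odd-dimensional case. Parametrising involutions $g$ by the rank $k$ of $1+g$ and by the structure of the restriction of the underlying form to $\ker(1+g)$, one obtains a sum over $k$ of terms $|G|/|C_G(g_k)|$, producing a closed-form polynomial expression in $q$ for $i(\Sp(2n,q))$ analogous to those in Proposition \ref{Oeven} for $\gO^{\pm}(2n,q)$.

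For the left-hand side, I would extract from L\"ubeck's database \cite{LuWWW} the generic character degrees for the group of type $C_n$ over $\FF_q$, organised into finitely many families indexed by discrete parameters, with each family contributing a polynomial in $q$. Summing the contributions of all families yields the character degree sum as a polynomial in $q$.

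With both sides in hand as polynomials in $q$ for each $n \leq 8$, the proof concludes by direct polynomial comparison, which is a finite mechanical check best carried out in a computer algebra system. The main obstacle is not conceptual but logistical: at $n = 8$, the number of unipotent and semisimple character families in type $C_8$ is large, so careful automated bookkeeping is needed to ensure that every family in L\"ubeck's data is accounted for and summed correctly, and that the involution-count formula is indexed consistently with the conventions of Proposition \ref{Oeven}. Since L\"ubeck's tabulations cover ranks well beyond $8$ for the classical types, no new representation-theoretic input is required.
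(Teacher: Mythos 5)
Your proposal is correct and follows essentially the same route as the paper: reduce via Lemma \ref{Allepi1} to checking that the character degree sum (from L\"ubeck's data) equals the number of involutions, and verify the resulting polynomial identity in $q$ for each $n \leq 8$ by machine computation. The only cosmetic differences are that the paper cites the involution count for $\Sp(2n,q)$ with $q$ even directly from \cite[Theorem 5.3]{FuGuSt16} rather than re-deriving it through the isomorphism with $\gO(2n+1,q)$, and it disposes of $n=1,2$ by citing earlier results of Gow and Turull.
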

\begin{proof}  When $n=1$, this follows from \cite[Lemma 7]{Go76} together with \cite[Lemma 6.4]{Tu01}, for example.  When $n=2$, this follows from \cite[Lemma 7 and Theorem 8]{Go76}.  In general, from Lemma \ref{Allepi1}, the statement that $G$ is totally orthogonal is equivalent to 
$$ \sum_{\chi \in \Irr(G)} \chi(1) = \# \{ g \in G \, | \, g^2 = 1 \}.$$
The number of involutions in $\Sp(2n,q)$ with $q$ even is given in \cite[Theorem 5.3]{FuGuSt16}, and these expressions may be directly computed for $n \leq 8$.  Meanwhile, all character degrees of $\Sp(2n,q)$ with $n \leq 8$ have been computed by L\"{u}beck \cite{LuWWW}, and we may directly compute their sum to check that it is the same polynomial in $q$ as the number of involutions.  The results, computed with GAP \cite{GAP}, are given in the first table of the Appendix.
\end{proof}

Now consider the groups $\Omega^{\pm}(4m,q)$, which we suspect to be totally orthogonal.   It is further suspected that the groups $\gO^{\pm}(2n,q)$ with $q$ even are totally orthogonal, since they are known to be strongly real groups \cite{ElNo82, Go81}, and when $q$ is odd all of the groups $\gO^{\pm}(N, q)$ are known to be totally orthogonal \cite{Go85}.  We now relate the total orthogonality of the groups $\gO^{\pm}(2n,q)$ with $q$ even to indicators of the groups $\Om^{\pm}(2n,q)$.  Note the resemblance to the case when $q$ is odd given in Theorem \ref{SOtwist}. 

\begin{lemma} \label{OifOmega}
Let $q$ be even, $n \geq 1$, $G = \gO^{\pm}(2n, q)$, and $H = \Om^{\pm}(2n,q)$.  Fix an involution $s \in G \setminus H$, and define $\iota$ on $H$ by ${^\iota h} = shs^{-1}$.
\begin{enumerate}
\item[(i)] Let $n = 2m$ so $G = \gO^{\pm}(4m, q)$ and $H = \Om^{\pm}(4m, q)$.  If $\ep(\psi) = 1$ for all $\psi \in \Irr(H)$, then $\ep(\chi) = 1$ for all $\chi \in \Irr(G)$ and $\epi(\psi) \geq 0$ for all $\psi \in \Irr(H)$.  
\item[(ii)] Let $n = 2m+1$ so $G= \gO^{\pm}(4m+2, q)$ and $H = \Om^{\pm}(4m+2, q)$.  If $\epi(\psi) = 1$ for all $\psi \in \Irr(H)$, then $\ep(\chi) = 1$ for all $\chi \in \Irr(G)$ and $\ep(\psi) \geq 0$ for all $\psi \in \Irr(H)$.  
\end{enumerate}
\end{lemma}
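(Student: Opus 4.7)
The plan is to recognize that both statements of the lemma are exactly the conclusions of Lemma \ref{index2}(iii) and (iv), respectively, once we verify the common structural hypotheses. So the work reduces to checking that the setup of Lemma \ref{index2} applies with $G = \gO^{\pm}(2n,q)$ and $H = \Om^{\pm}(2n,q)$ in the $q$ even case.

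The common preliminaries I would establish first are: (a) $[G:H] = 2$, which is \eqref{OrderOmqeven}; (b) $G$ is a real group—in fact it is strongly real by \cite{ElNo82, Go81}, as noted in the paragraph preceding the lemma statement; and (c) there is an involution $s \in G \setminus H$, so that $G = H\langle s \rangle$. For (c), the cleanest argument is to exhibit an orthogonal transvection: for a vector $v \in V$ with $Q(v) \neq 0$, the map $T_v(x) = x + (B(x,v)/Q(v))\,v$, where $B$ is the polarization of $Q$, is an involution in $\gO^{\pm}(2n,q)$ satisfying $\mathrm{rank}(1 + T_v) = 1$, so $T_v \notin \Om^{\pm}(2n,q)$ by Lemma \ref{OmCritqeven}. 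Alternatively one can simply invoke Proposition \ref{Omeven}(3) to see that the count of involutions in $G \setminus H$ is a positive sum of positive terms.

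With the setup in hand, part (i) follows by taking $H = \Om^{\pm}(4m, q)$ and applying Lemma \ref{index2}(iii): the hypothesis that $\ep(\psi) = 1$ for all $\psi \in \Irr(H)$, together with $G$ being real, yields exactly the stated conclusion that $\ep(\chi) = 1$ for all $\chi \in \Irr(G)$ and $\epi(\psi) \geq 0$ for all $\psi \in \Irr(H)$. Part (ii) is parallel: with $H = \Om^{\pm}(4m+2, q)$, the hypothesis $\epi(\psi) = 1$ for all $\psi \in \Irr(H)$ and the reality of $G$ place us in the hypothesis of Lemma \ref{index2}(iv), giving $\ep(\chi) = 1$ for all $\chi \in \Irr(G)$ and $\ep(\psi) \geq 0$ for all $\psi \in \Irr(H)$.

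There is no substantive obstacle. The only thing worth being careful about is not conflating the two cases—the split by parity of $n$ here is a bookkeeping choice that mirrors the pattern already seen in Theorem \ref{SOtwist} for $q$ odd (where $\SO^{\pm}(4m,q)$ is totally orthogonal while $\SO^{\pm}(4m+2,q)$ only satisfies a twisted version of total orthogonality), and the lemma is designed so that whichever hypothesis one can verify for $\Om^{\pm}(2n,q)$ yields the corresponding statement about $\gO^{\pm}(2n,q)$ via Lemma \ref{index2}.
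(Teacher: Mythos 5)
Your proposal is correct and follows essentially the same route as the paper: the paper's proof likewise notes $[G:H]=2$ with $G = H\langle s\rangle$, cites Gow and Ellers--Nolte for the (strong) reality of $\gO^{\pm}(2n,q)$ in even characteristic, and then applies Lemma \ref{index2}(iii) and (iv) to obtain parts (i) and (ii) respectively. Your extra verification that an involution exists in $G \setminus H$ (via a transvection or Proposition \ref{Omeven}(3)) is sound but not needed, since the lemma's statement already posits such an $s$.
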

\begin{proof}  Since $[G:H]=2$ and $G = H\langle s \rangle$, we may apply Lemma \ref{index2}.  It is proved by Gow \cite{Go81} and by Ellers and Nolte \cite{ElNo82} that $G = \gO^{\pm}(2n, q)$ is a strongly real group when $q$ is even, and in particular $G$ is a real group.  In the case that $n=2m$, statement (i) follows from Lemma \ref{index2}(iii), and in the case that $n=2m+1$, statement (ii) follows from Lemma \ref{index2}(iv).
\end{proof}

We again use computation to get the following results for $\Om^{\pm}(2n,q)$ and $\gO^{\pm}(2n,q)$ with $n$ small.

\begin{theorem} \label{OOmqeven}  Let $q$ be even.  Fix an involution $s \in \gO^{\pm}(2n,q) \setminus \Om^{\pm}(2n,q)$, and define $\iota$ on $\Om^{\pm}(2n,q)$ by ${^\iota h} = shs^{-1}$.
\begin{enumerate}
\item[(i)] If $n = 2m$, and $H=\Om^{\pm}(4m,q)$ with $m \leq3$, or $H=\Om^{+}(16,q)$, then $\ep(\psi)=1$ (and $\epi(\psi) \geq 0$) for all $\psi \in \Irr(H)$, so $H$ is totally orthogonal and Theorem \ref{simple} holds for these groups when they are simple.
\item[(ii)] If $n = 2m+1$, and $H = \Om^{\pm}(4m+2,q)$ with $m \leq 3$, then $\epi(\psi)=1$ (and $\ep(\psi) \geq 0$) for all $\psi \in \Irr(H)$.
\item[(iii)] If $G = \gO^{\pm}(2n,q)$ with $n \leq 7$, or $G = \gO^+(16,q)$, then $\ep(\chi)=1$ for all $\chi \in \Irr(G)$, so $G$ is totally orthogonal.
\end{enumerate}
\end{theorem}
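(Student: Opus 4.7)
The plan is to follow the computational strategy of Theorem~\ref{Spqeven}: in every part, use Lemma~\ref{Allepi1} to convert the Frobenius--Schur indicator claim into a polynomial identity in $q$ asserting that a character degree sum equals an explicit count of elements, and then verify that identity at the listed ranks by computing both sides in GAP~\cite{GAP}, drawing the degrees from L\"ubeck's tables \cite{LuWWW} and the counts from Section~\ref{NumInvols}.

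For part (i), with $H = \Om^{\pm}(4m,q)$, Lemma~\ref{Allepi1} applied with $\iota = 1$ reduces ``$\ep(\psi)=1$ for all $\psi \in \Irr(H)$'' to
$$\sum_{\psi \in \Irr(H)} \psi(1) = i(H),$$
with the right-hand side supplied by Proposition~\ref{Omeven}(1)--(2). Once this polynomial identity in $q$ is verified for $m \leq 3$ and for $\Om^+(16,q)$, the inequality $\epi(\psi) \geq 0$ is immediate from Lemma~\ref{OifOmega}(i); the simplicity remark for Theorem~\ref{simple} follows because these groups are simple in the usual parameter range.

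For part (ii), with $H = \Om^{\pm}(4m+2,q)$, Lemma~\ref{Allepi1} gives $\epi(\psi) = 1$ for all $\psi \in \Irr(H)$ if and only if $\sum_\psi \psi(1) = \#\{h \in H : s h s^{-1} = h^{-1}\}$. Exactly as in the proof of Theorem~\ref{SOtwist}, the map $h \mapsto sh$ is a bijection from this set onto the involutions of $G \setminus H$: if $shs^{-1} = h^{-1}$ then $sh = h^{-1} s$, so $(sh)^2 = (h^{-1} s)(sh) = h^{-1} s^2 h = 1$, and $sh \in G \setminus H$ since $s \notin H$. Hence the right-hand side equals $i(\gO^{\pm}(4m+2,q) \setminus \Om^{\pm}(4m+2,q))$, given by Proposition~\ref{Omeven}(3), and comparison with L\"ubeck's degree sums for $m \leq 3$ finishes the identity; $\ep(\psi) \geq 0$ then follows from Lemma~\ref{OifOmega}(ii).

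Part (iii) admits two routes. Directly, Lemma~\ref{Allepi1} with $\iota = 1$ reduces $\ep(\chi) = 1$ for all $\chi \in \Irr(G)$ to $\sum_\chi \chi(1) = i(G)$ with $i(G)$ from Proposition~\ref{Oeven}. Alternatively, Lemma~\ref{OifOmega} promotes parts (i) and (ii) directly into (iii) on the overlapping cases; in particular, the $\gO^+(16,q)$ case follows at once from (i) applied to $\Om^+(16,q)$. The main obstacle is not mathematical but bibliographic and computational: one must extract from L\"ubeck's tables the generic character degrees for each relevant group, keep the $+$ and $-$ types strictly separate, and verify the polynomial identification in $q$ exactly rather than numerically. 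The same data limitation explains why $\Om^-(16,q)$ and $\gO^-(16,q)$ are excluded from the statement, the character degree information at that rank not being currently available in \cite{LuWWW}.
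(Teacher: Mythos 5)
Your proposal follows essentially the same route as the paper: Lemma \ref{Allepi1} converts each claim into a polynomial identity between a character degree sum and an involution count from Proposition \ref{Omeven}, the bijection $h \mapsto sh$ identifies $\#\{h \in H : shs^{-1}=h^{-1}\}$ with $i(G \setminus H)$ exactly as in Theorem \ref{SOtwist}, and Lemma \ref{OifOmega} supplies the nonnegativity statements and part (iii). The one place your plan as written would stumble is the small-rank cases: L\"ubeck's tables list $D_n(q)_{\SO}$ and ${^2D}_n(q)_{\SO}$ only for $4 \leq n \leq 7$ (plus $D_8^+$), so for $n=1,2,3$ there is no table entry to read off, and the paper instead invokes the exceptional isomorphisms $\Om^{\pm}(2,q) \cong C_{q\mp1}$ inside a dihedral group, $\Om^{+}(4,q) \cong \SL(2,q)\times\SL(2,q)$ and $\Om^{-}(4,q) \cong \SL(2,q^2)$ (using known total orthogonality of $\SL(2,q)$ for $q$ even and closure under direct products), and $\Om^{\pm}(6,q) \cong \SL_4(q), \SU_4(q)$ (whose degrees do appear, but under $A_3(q)_{\mathrm{sc}}$ and ${^2A}_3(q)_{\mathrm{sc}}$). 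This is a gap in sourcing rather than in the mathematics, and the rest of your argument, including the explanation for the exclusion of $\Om^-(16,q)$, matches the paper.
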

\begin{proof}  In (i), by Lemma \ref{Allepi1} we have $\ep(\psi) = 1$ for all $\psi \in \Irr(H)$ if and only if the sum of the character degrees of $H$ is the number of involutions of $H=\Om^{\pm}(4m,q)$, which is counted in Proposition \ref{Omeven}(1) and (2).  In (ii), also by Lemma \ref{Allepi1}, we have $\epi(\psi)=1$ for all $\psi \in \Irr(H)$ if and only if the character degree sum of $H$ is the number of $h \in H$ such that $h \, {^\iota h} = 1$.  From the definition of $\iota$, we have $h \, {^\iota h} = (hs)^2$, where $hs \in \gO^{\pm}(4m+2, q) \setminus \Om^{\pm}(4m+2,q)$.  Thus the number of $h \in H$ such that $h \, {^\iota h} = 1$ is the number of involutions in $\gO^{\pm}(4m+2,q) \setminus \Om^{\pm}(4m+2, q)$, an expression for which is given in Proposition \ref{Omeven}(3).

We again apply the data of L\"{u}beck \cite{LuWWW}, who has computed all of the character degrees of $\Om^{\pm}(2n,q)$ when $q$ is even and $4 \leq n \leq 7$, and for $\Om^{+}(16, q)$ when $q$ is even, which are the groups $D_n(q)_{\SO}$ (for $\Omega^+(2n,q)$),  and ${^2 D}_n(q)_{\SO}$ (for $\Omega^-(2n,q)$) for $q \equiv 0$ mod $2$ in L\"{u}beck's data.  When $1 \leq n \leq 3$, we apply small rank group isomorphisms and known results.  

When $n=1$, we have by \cite[Proposition 2.9.1(iii)]{KlLi90} that $\gO^{\pm}(2,q)$ is isomorphic to the dihedral group of order $2(q \mp 1)$, and $\Omega^{\pm}(2,q)$ is isomorphic to the cyclic subgroup of rotations of order $q \mp 1$.  The number of involutions in $\gO^{\pm}(2,q) \setminus \Omega^{\pm}(2,q)$ is thus $q \mp 1$, corresponding to the reflections of the dihedral group, which is also the sum of the degrees of the cyclic group of order $q \mp 1$.  When $n = 2$, we have by \cite[Proposition 2.9.1(iv) and (v)]{KlLi90} that $\Omega^{+}(4,q) \cong \SL(2,q) \times \SL(2,q)$ (noting that $q$ is even) and $\Omega^{-}(4, q) \cong \SL(2,q^2)$.  As mentioned in the proof of Theorem \ref{Spqeven}, it is known that $\SL(2,q)$ is totally orthogonal when $q$ is even, and so the statement follows for $\Om^-(4,q)$.  The property of being totally orthogonal is preserved under taking direct products (see \cite[Lemma 2.2]{LuScVi12}, for example), and it follows that $\Omega^+(4,q)$ is also totally orthogonal.  When $n=3$, we have $\Omega^+(6,q) \cong \SL_4(q)$ and $\Omega^-(6,q) \cong \SU_4(q)$, from \cite[Proposition 2.9(vii)]{KlLi90}, noting that $Z(\SL_4(q)) = Z(\SU_4(q)) = 1$ when $q$ is even.  In this case, we may compute the number of involutions in $\gO^{\pm}(6,q) \setminus \Om^{\pm}(6,q)$ given by Proposition \ref{Omeven}(3) and match this to the character degree sums of $\SL_4(q)$ and $\SU_4(q)$, respectively, where these character degrees are given by L\"{u}beck's data for the groups $A_3(q)_{\mathrm{sc}}$ and ${^2 A}_3(q)_{\mathrm{sc}}$ for $q$ even.

For $4 \leq n \leq 8$, we compute the relevant number of involutions given in Proposition \ref{Omeven}, and match this with the character degree sums obtained from L\"{u}beck's data.  The polynomial in $q$ we obtain in each case for $1 \leq n \leq 8$ is given in the second table of the Appendix.  In particular we get the statements in (i) and (ii) that $\ep(\psi)=1$ or $\epi(\psi)=1$ for all $\psi \in \Irr(H)$.  The second claims in (i) and (ii) that $\epi(\psi) \geq 0$ or $\ep(\psi)\geq 0$ for all $\psi \in \Irr(H)$, and the statement in (iii) that the groups $\gO^{\pm}(2n,q)$ are totally orthogonal, both follow from Lemma \ref{OifOmega}.
\end{proof}

\section{Generating Functions} \label{GenFuns}

In this section we give generating functions for the numbers of involutions found in Section \ref{NumInvols}.  We use the following standard notation:
$$ (x; y)_n = \prod_{i = 1}^n (1 - xy^{i-1}), \quad (x; y)_{\infty} = \prod_{i \geq 1} (1 - xy^{i-1}) \;\; \text{ if } \;\; |y| < 1.$$
Then we can write
$$|\gO(2n+1, q)| = 2q^{2n^2+n} (1/q^2; 1/q^2)_n \; \text{ for } q \text{ odd,} \;  |\Sp(2n,q)| = q^{2n^2 + n} (1/q^2 ; 1/q^2)_n  \, \text{ for } q \text { even,}$$
$$\text{ and } |\gO^{\pm}(2n,q)| = \frac{2q^{2n^2}}{q^n \pm 1} (1/q^2 ; 1/q^2)_n \; \text{ for any } q.$$
From the above expression for $|\gO^-(2n,q)|$, for the purposes of convenience in power series coefficients, we will adopt the convention that $1/|\gO^-(0,q)| = 0$.

\subsection{Groups over fields of odd characteristic}

We begin with finding generating functions for the number of involutions in the groups $\SO^{\pm}(2n,q)$ for $q$ odd.  These essentially follow from computations made in \cite{FuGuSt16}.

\begin{theorem} \label{GFSOqodd}
For $q$ odd and $|u| < 1/q$, we have the generating functions
\begin{enumerate}
\item[(1)] {\hfil $\displaystyle \sum_{n \geq 0}  \frac{i(\SO^{\pm} (2n,q))}{ |\gO^{\pm}(2n,q)|}  q^{n^2} u^n= \frac{1}{2} \left[ \frac{ \prod_{i \geq 1} (1 + u/q^{2(i-1)})^2 }{\prod_{i \geq 1} (1 - u^2/q^{2(i-2)} ) } \pm \frac{ \prod_{i \geq 1} (1 + u/q^{2i-1})^2 }{ \prod_{i \geq 1} (1 - u^2/q^{2(i-1)}) } \right],$}
\item[(2)] {\hfil $\displaystyle \sum_{n \geq 0} \frac{i(\SO(2n+1,q))}{|\gO(2n+1,q)|} q^{n^2} u^n  = \frac{1}{2}\frac{1}{1-u} \frac{\prod_{i \geq 1} (1 + u/q^{2i})^2}{ \prod_{i \geq 1} (1 - u^2/q^{2i})}$.}
\end{enumerate}
\end{theorem}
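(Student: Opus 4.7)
The plan is to convert Proposition~\ref{SOodd} into generating-function identities and then factor the resulting double sums via a single $q$-series identity that handles all three cases. Using the compact formula $|\gO^{\pm}(2n,q)| = 2q^{2n^2}(1/q^2;1/q^2)_n/(q^n \pm 1)$, the weight $q^{n^2}u^n$ multiplied by the two denominator factors in the $R$-$S$ convolution produces the exponent $q^{(R+S)^2 - 2R^2 - 2S^2} = q^{-(R-S)^2}$, together with coupling factors of the form $(q^R \pm 1)(q^S \pm 1)$ arising from the orthogonal type labels.

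Writing $G^{\pm}_{\SO}(u)$ for the two generating functions on the left in Part~(1), the next step is to form the symmetric and antisymmetric combinations $G^{+}_{\SO}(u) \pm G^{-}_{\SO}(u)$. The elementary identities $(q^R+1)(q^S+1) \pm (q^R-1)(q^S-1)$ equal $2(q^{R+S}+1)$ and $2(q^R+q^S)$ respectively, fully collapsing the coupling. After this, both combinations reduce to evaluating $F(v,v)$ at $v=qu$ and $v=u$ respectively, where
\[
F(v, w) := \sum_{R, S \geq 0}\frac{v^R w^S\, q^{-(R-S)^2}}{(1/q^2;1/q^2)_R\, (1/q^2;1/q^2)_S}.
\]

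The key computation is then to establish the closed form
\[
F(v, w) = \frac{(-v/q;\, 1/q^2)_\infty \, (-w/q;\, 1/q^2)_\infty}{(vw;\, 1/q^2)_\infty}.
\]
This would follow by summing over $S$ first using Euler's identity in the form $\sum_n y^n/[q^{n^2}(1/q^2;1/q^2)_n] = (-y/q;1/q^2)_\infty$, which collapses the inner sum to $(-wq;q^2)_R \cdot (-w/q;1/q^2)_\infty$ after splitting the product by the sign of $2R-1-2i$; converting $(q^2;q^2)_R = (-1)^R q^{R(R+1)}(1/q^2;1/q^2)_R$ then allows the $q$-binomial theorem to be applied to the remaining sum over $R$. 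Substituting $(v,w) = (qu,qu)$ and $(u,u)$ in this formula, the numerator products match those in the theorem since $(-qu/q;1/q^2)_\infty = \prod_{i\geq 1}(1+u/q^{2(i-1)})$ and $(-u/q;1/q^2)_\infty = \prod_{i \geq 1}(1 + u/q^{2i-1})$; the denominators are recognized after expanding $(u^2q^2; 1/q^2)_\infty = (1-u^2q^2)(u^2;1/q^2)_\infty$. Solving the resulting $2 \times 2$ linear system yields Part~(1). Part~(2) follows from the same framework: since $|\gO(2n+1-2R,q)| = 2q^{2M^2+M}(1/q^2;1/q^2)_M$ has no $(q^M \pm 1)$ factor, the corresponding generating function is $\tfrac{1}{2}F(qu, u/q)$, and using $(-u/q^2;1/q^2)_\infty = \prod_{i \geq 1}(1+u/q^{2i})$ together with $(1-u^2) = (1-u)(1+u)$ rewrites this in the stated form.

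The principal obstacle is the coupling term $q^{2RS}$ appearing in $q^{(R+S)^2} = q^{R^2+S^2+2RS}$, which prevents a direct Cauchy factorization of the initial double sum into a product of single-variable series. The symmetric/antisymmetric decomposition of $G^{\pm}_{\SO}$ converts this coupling into the more tractable $q^{-(R-S)^2}$, whose resolution rests on the formal $q$-binomial theorem applied at base $q^2$: since $|q|>1$ makes $(z;q^2)_\infty$ divergent as a numerical product, one must interpret these Pochhammer symbols via Euler's formal power-series identity, which legitimates the manipulation $(vwq^2;q^2)_\infty = 1/(vw;1/q^2)_\infty$ and $(-vq;q^2)_\infty = 1/(-v/q;1/q^2)_\infty$ used above.
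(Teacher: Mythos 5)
Your proposal is correct, but it takes a genuinely different route from the paper. The paper's proof is essentially a citation-and-assembly argument: it takes the partial generating functions $S_1^+$, $S_3^+$, $S_1^-$ already computed in CASES 1 and 3 of the proofs of Theorems 2.15 and 2.16 of \cite{FuGuSt16}, adds them according to Proposition \ref{SOodd}, and for part (2) simply halves the known generating function for $i(\gO(2n+1,q))$ from \cite[Theorem 2.17]{FuGuSt16}. You instead rederive everything from the single two-variable identity
\[
F(v,w) \;=\; \sum_{R,S\geq 0}\frac{v^Rw^S\,q^{-(R-S)^2}}{(1/q^2;1/q^2)_R\,(1/q^2;1/q^2)_S} \;=\; \frac{(-v/q;1/q^2)_\infty\,(-w/q;1/q^2)_\infty}{(vw;1/q^2)_\infty},
\]
which I have checked is correct and is proved exactly as you indicate (Euler's second identity collapses the inner sum to $(-wq;q^2)_R\,(-w/q;1/q^2)_\infty$, and the $q$-binomial theorem with $A=-1/(wq)$, $x=vw$ finishes the outer sum); all the stated formulas then become the specializations $F(qu,qu)$, $F(u,u)$ and $\tfrac12 F(qu,u/q)$ once the $(q^R\pm 1)(q^S\pm 1)$ couplings are collapsed. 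This buys a self-contained and more uniform proof --- in particular part (2) falls out of the same master formula rather than the separate relation $i(\SO(2n+1,q))=\tfrac12\, i(\gO(2n+1,q))$ --- at the cost of reproving special cases of \cite{FuGuSt16}. Two small points to repair in the write-up: the identity you quote with value $2(q^R+q^S)$ is not the one you need --- the couplings for $G^+\pm G^-$ are $2(q^{R+S}+1)\pm 2(q^{R+S}-1)$, i.e.\ $4q^{R+S}$ and $4$, which is precisely what makes those combinations equal to $F(qu,qu)$ and $F(u,u)$ as you correctly conclude --- and you should note explicitly that extending the $\gO^-\times\gO^-$ sum in Proposition \ref{SOodd}(1) to all $R+S=n$ is harmless because the convention $1/|\gO^-(0,q)|=0$ coincides with the vanishing of the factor $q^0-1$.
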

\begin{proof} In \cite[CASE 1, Proof of Theorem 2.15]{FuGuSt16}, it is proved that
\begin{align*} 
S^+_1 & := \sum_{n \geq 0} u^n q^{n^2} \sum_{R=0}^n \frac{1}{|\gO^+(2R,q)||\gO^+(2n-2R,q)|} \\ 
           & = \frac{1}{4} \left[ \frac{(-u/q; 1/q^2)^2_{\infty}}{(u^2; 1/q^2)_{\infty}} + 2 \frac{(-u/q; 1/q^2)_{\infty} (-u; 1/q^2)_{\infty}}{(qu^2 ; 1/q^2)_{\infty}} + \frac{(-u; 1/q^2)^2_{\infty}}{(q^2u^2; 1/q^2)_{\infty}} \right],
\end{align*}
and in \cite[CASE 3, Proof of Theorem 2.15]{FuGuSt16} it is proved that
\begin{align*}
S^+_3 & := \sum_{n \geq 0} u^n q^{n^2} \sum_{R=1}^{n-1} \frac{1}{|\gO^-(2R,q)||\gO^-(2n-2R,q)|} \\ 
           & = \frac{1}{4} \left[ \frac{(-u/q; 1/q^2)^2_{\infty}}{(u^2; 1/q^2)_{\infty}} - 2 \frac{(-u/q; 1/q^2)_{\infty} (-u; 1/q^2)_{\infty}}{(qu^2 ; 1/q^2)_{\infty}} + \frac{(-u; 1/q^2)^2_{\infty}}{(q^2u^2; 1/q^2)_{\infty}} \right].
\end{align*}
By Proposition \ref{SOodd}(1) and the above, we have
\begin{align*}
\sum_{n \geq 0}  \frac{i(\SO^{+} (2n,q))}{ |\gO^{+}(2n,q)|}  q^{n^2} u^n & = S^+_1 + S^+_3  = \frac{1}{2}\left[ \frac{(-u; 1/q^2)^2_{\infty}}{(q^2u^2; 1/q^2)_{\infty}} + \frac{(-u/q; 1/q^2)^2_{\infty}}{(u^2; 1/q^2)_{\infty}}  \right] \\
 & = \frac{1}{2}\left[ \frac{ \prod_{i \geq 1} (1 + u/q^{2(i-1)})^2 }{\prod_{i \geq 1} (1 - u^2/q^{2(i-2)} ) } + \frac{ \prod_{i \geq 1} (1 + u/q^{2i-1})^2 }{ \prod_{i \geq 1} (1 - u^2/q^{2(i-1)}) } \right].
\end{align*}
In \cite[CASE 1, Proof of Theorem 2.16]{FuGuSt16}, it is proved that
$$S^-_1  := \sum_{n \geq 0} u^n q^{n^2} \sum_{R=0}^{n-1} \frac{1}{|\gO^+(2R,q)||\gO^-(2n-2R,q)|} = \frac{1}{4} \left[ \frac{(-u; 1/q^2)^2_{\infty}}{(q^2u^2; 1/q^2)_{\infty}} - \frac{(-u/q; 1/q^2)^2_{\infty}}{(u^2; 1/q^2)_{\infty}}  \right].$$
From this and Proposition \ref{SOodd}(2), we have
\begin{align*}
\sum_{n \geq 0} \frac{i(\SO^{-} (2n,q))}{ |\gO^{-}(2n,q)|} q^{n^2} u^n & = 2S^-_1  = \frac{1}{2}\left[ \frac{(-u; 1/q^2)^2_{\infty}}{(q^2u^2; 1/q^2)_{\infty}} - \frac{(-u/q; 1/q^2)^2_{\infty}}{(u^2; 1/q^2)_{\infty}}  \right] \\
 & = \frac{1}{2}\left[ \frac{ \prod_{i \geq 1} (1 + u/q^{2(i-1)})^2 }{\prod_{i \geq 1} (1 - u^2/q^{2(i-2)} ) } - \frac{ \prod_{i \geq 1} (1 + u/q^{2i-1})^2 }{ \prod_{i \geq 1} (1 - u^2/q^{2(i-1)}) } \right],
\end{align*}
which gives the result in (1).

For (2), it follows from \cite[Theorem 2.17]{FuGuSt16} and Proposition \ref{Oodd}(3) that
$$ \sum_{n \geq 0} \frac{i(\gO(2n+1,q))}{|\gO(2n+1,q)|} q^{n^2} u^n  = \frac{1}{1-u} \frac{\prod_{i \geq 1} (1 + u/q^{2i})^2}{ \prod_{i \geq 1} (1 - u^2/q^{2i})}.$$
Since $i(\SO(2n+1,q)) = (1/2) i(\gO(2n+1,q))$ from Proposition \ref{SOodd}(3), the generating function in (2) follows.
\end{proof}

From the result of Gow stated in Theorem \ref{SOtwist}(1) that $\SO^{\pm}(4m,q)$ is totally orthogonal when $q$ is odd, the character degree sum of $\SO^{\pm}(4m,q)$ is $|\gO^{\pm}(4m,q)|/q^{4m^2}$ times the coefficient of $u^{2m}$ of the generating function in Theorem \ref{GFSOqodd}(1).  By Theorem \ref{SOtwist}(2), the character degree sum of $\SO^{\pm}(4m+2,q)$ is $|\gO^{\pm}(4m+2,q)|/q^{(2m+1)^2}$ times the coefficient of $u^{2m+1}$ in the following generating function.

\begin{theorem} \label{GFO-SOqodd}
For $q$ odd and $|u| < 1/q$, we have the generating function
$$ \sum_{n \geq 0} \frac{i(\gO^{\pm} (2n,q) \setminus \SO^{\pm}(2n,q))}{ |\gO^{\pm}(2n,q)|} q^{n^2} = \frac{uq}{2} \frac{ \prod_{i \geq 1} (1 + u/q^{2(i-1)} )^2 } { \prod_{i \geq 1} (1 - u^2/q^{2(i-2)}) }.$$
\end{theorem}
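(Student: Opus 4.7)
The plan is to parallel the derivation of Theorem \ref{GFSOqodd}, starting from Proposition \ref{SOcosetodd} and using the same $q$-series machinery that underlies \cite[Theorem 2.15]{FuGuSt16}. A useful preliminary observation is that Proposition \ref{SOcosetodd} gives
\[
\frac{i(\gO^{\pm}(2n,q) \setminus \SO^{\pm}(2n,q))}{|\gO^{\pm}(2n,q)|} = 2 \sum_{R=0}^{n-1} \frac{1}{|\gO(2R+1,q)||\gO(2n-2R-1,q)|},
\]
with the factor $|\gO^{\pm}(2n,q)|$ canceling completely; this ratio is therefore independent of the sign $\pm$, consistent with the single right-hand side stated in the theorem.

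My next step would be to reindex by $a = R$ and $b = n-R-1$, so that $(a,b)$ ranges over $\mathbb{Z}_{\geq 0}^2$ with $n = a+b+1$. Substituting $|\gO(2k+1,q)| = 2q^{2k^2+k}(1/q^2;1/q^2)_k$ and using the arithmetic identity $(a+b+1)^2 - (2a^2+a) - (2b^2+b) = 1 + (a+b) - (a-b)^2$, the generating function becomes
\[
\sum_{n \geq 0} \frac{i(\gO^{\pm}(2n,q) \setminus \SO^{\pm}(2n,q))}{|\gO^{\pm}(2n,q)|} q^{n^2} u^n = \frac{uq}{2} \sum_{a, b \geq 0} \frac{(uq)^{a+b}\, q^{-(a-b)^2}}{(1/q^2;1/q^2)_a\, (1/q^2;1/q^2)_b}.
\]
The prefactor $uq/2$ is already present in the claimed formula, so the content of the theorem is the evaluation of this double sum as $(-u;1/q^2)_\infty^2/(q^2u^2;1/q^2)_\infty$, and a direct identification of the infinite $q$-products with $\prod_{i \geq 1}(1+u/q^{2(i-1)})^2$ and $\prod_{i \geq 1}(1 - u^2/q^{2(i-2)})$ respectively completes the match with the stated expression.

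The cleanest way I see to evaluate the double sum avoids an isolated $q$-series derivation by leveraging what is already assembled. In the proof of Theorem \ref{GFSOqodd}, the expressions $S_1^+$ and $S_3^+$ from CASES 1 and 3 of \cite[Proof of Theorem 2.15]{FuGuSt16} together give the generating function for $i(\SO^+(2n,q))/|\gO^+(2n,q)|$. Meanwhile, \cite[Theorem 2.15]{FuGuSt16} gives the full generating function for $i(\gO^+(2n,q))/|\gO^+(2n,q)|$, and the splitting in Proposition \ref{Oodd}(1) according to whether the index $k$ (dimension of the $-1$-eigenspace) is even or odd identifies our target with the CASE 2 piece, i.e.\ the difference of these two known generating functions; this subtraction produces the closed-form expression. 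The main technical obstacle is thus just the underlying $q$-series identity that makes the CASE 2 sum evaluate in closed form, which is handled via the same Cauchy/Jacobi-type manipulations used for CASES 1 and 3 in \cite{FuGuSt16} and so need only be invoked by reference.
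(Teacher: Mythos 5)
Your final route --- obtaining the target as the difference between the full generating function for $i(\gO^{\pm}(2n,q))/|\gO^{\pm}(2n,q)|$ from \cite[Theorems 2.15 and 2.16]{FuGuSt16} and the generating function of Theorem \ref{GFSOqodd}(1), corresponding to splitting Proposition \ref{Oodd} into even and odd eigenspace dimensions $k$ --- is exactly the paper's proof, and your preliminary reindexed double sum is a correct but ultimately unused supplement. The proposal is correct and takes essentially the same approach as the paper.
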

\begin{proof} From \cite[Theorems 2.15 and 2.16]{FuGuSt16}, we have for $q$ odd and $|u|<1/q$,
$$ \sum_{n \geq 0} \frac{i(\gO^{\pm}(2n,q))}{ |\gO^{\pm}(2n,q)| }  q^{n^2} u^n = \frac{1}{2(1-uq)} \frac{ \prod_{i \geq 1} (1 + u/q^{2(i-1)})^2 }{\prod_{i \geq 1} (1 - u^2/q^{2(i-1)} ) } \pm \frac{1}{2}\frac{ \prod_{i \geq 1} (1 + u/q^{2i-1})^2 }{ \prod_{i \geq 1} (1 - u^2/q^{2(i-1)}) }.$$
The result is obtained by taking the difference of this and the generating function in Theorem \ref{GFSOqodd}(1).
\end{proof}

Generating functions for the number of involutions in $\Om^{\pm}(n,q)$ with $q \equiv 1($mod $4)$ also follow quickly from calculations in \cite{FuGuSt16}, because of the convenient formula for the number of involutions in this case, given in Proposition \ref{OmOdd1mod4}.

\begin{theorem} \label{GFOmq1mod4}
For $q \equiv 1($mod $4)$ and $|u| < 1/q$, we have the generating functions
\begin{enumerate}
\item[(1)]
{\hfil $\displaystyle \sum_{n \geq 0} \frac{i(\Om^+(2n,q))}{|\gO^+(2n,q)|}q^{n^2} u^n$}  
$$=  \frac{1}{4} \left[ \frac{\prod_{i \geq 1} (1 + u/q^{2(i-1)})^2}{ \prod_{i \geq 1} (1 - u^2/q^{2(i-2)})} + \frac{2 \prod_{i \geq 1} (1 + u/q^{i-1})}{ \prod_{i \geq 1} (1 - u^2/q^{2i-3}) } + \frac{\prod_{i \geq 1} (1 + u/q^{2i-1})^2}{ \prod_{i \geq 1} (1 - u^2/q^{2(i-1)})} \right],$$
\item[(2)]
{\hfil $\displaystyle \sum_{n \geq 0} \frac{i(\Om^-(2n,q))}{|\gO^-(2n,q)|} q^{n^2} u^n  = \frac{1}{4} \left[  \frac{\prod_{i \geq 1} (1 + u/q^{2(i-1)})^2}{ \prod_{i \geq 1} (1 - u^2/q^{2(i-2)})} - \frac{\prod_{i \geq 1} (1 + u/q^{2i-1})^2}{ \prod_{i \geq 1} (1 - u^2/q^{2(i-1)})}  \right],$}
\item[(3)]
{\hfil $\displaystyle \sum_{n \geq 0} \frac{i(\Om(2n+1,q))} { |\gO(2n+1,q)| } q^{n^2} u^n   =  \frac{1}{4} \left[ \frac{(1 +u) \prod_{i \geq 1} (1 + u/q^{2i})^2}{ \prod_{i \geq 1} (1 - u^2/q^{2(i-1)}) } + \frac{\prod_{i \geq 1} (1 + u/q^{i})}{ \prod_{i \geq 1} (1 - u^2/q^{2i-1}) } \right].$}
\end{enumerate}
\end{theorem}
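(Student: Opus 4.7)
The plan is to handle parts (1) and (2) as immediate consequences of computations already made in the proof of Theorem \ref{GFSOqodd}, and to evaluate part (3) by reducing the double sum from Proposition \ref{OmOdd1mod4}(3) to a product form via two applications of the $q$-binomial theorem. For (1), dividing Proposition \ref{OmOdd1mod4}(1) by $|\gO^+(2n,q)|$ and weighting by $q^{n^2}u^n$ yields precisely the series $S^+_1$ appearing in the proof of Theorem \ref{GFSOqodd}; translating its closed form from the $(\cdot\,;1/q^2)_\infty$ notation into the explicit infinite products displayed in the statement completes the case. Part (2) is analogous via $S^-_1$.

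For part (3), let $T$ denote the desired generating function. Setting $m = n-R$ in Proposition \ref{OmOdd1mod4}(3), substituting $|\gO^+(2R,q)|^{-1} = (q^R+1)/(2q^{2R^2}(1/q^2;1/q^2)_R)$ and $|\gO(2m+1,q)|^{-1} = 1/(2q^{2m^2+m}(1/q^2;1/q^2)_m)$, and collapsing the $q$-exponent via $(R+m)^2 - 2R^2 - 2m^2 - m = -(R-m)^2 - m$, puts $T$ in the form
$$T = \tfrac{1}{4}\sum_{R,m\geq 0}\frac{(q^R+1)\,u^{R+m}}{q^{(R-m)^2+m}(1/q^2;1/q^2)_R(1/q^2;1/q^2)_m}.$$
Splitting $(q^R+1) = q^R + 1$ separates $T$ into two sums $T_a$ and $T_b$ which will correspond, respectively, to the two terms in the statement.

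The heart of the argument, and the main obstacle, is a uniform two-stage evaluation of $T_a$ and $T_b$. Both, after interchanging the order of summation, have the shape of an outer sum over $R$ with inner sum $\sum_{m\geq 0}(uq^{2R})^m/(q^{m^2+m}(1/q^2;1/q^2)_m)$. The inner sum is evaluated by the $q$-exponential identity $\sum_{m\geq 0}z^m/(q^{m^2+m}(1/q^2;1/q^2)_m) = (-z/q^2;1/q^2)_\infty$ at $z=uq^{2R}$, which factors cleanly as $(-u;q^2)_R\cdot(-u/q^2;1/q^2)_\infty$. The remaining outer sum over $R$ is then brought into $q$-binomial form via the reversal $(-u;q^2)_R = u^R q^{R(R-1)}(-1/u;1/q^2)_R$, and evaluated by the $q$-binomial theorem $\sum_R (a;1/q^2)_R z^R/(1/q^2;1/q^2)_R = (az;1/q^2)_\infty/(z;1/q^2)_\infty$ with $a=-1/u$; after cancellation of $q$-powers the effective $z$ is $u^2$ in the case of $T_a$ and $u^2/q$ in the case of $T_b$. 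Simplifying $T_a$ via $(-u;1/q^2)_\infty = (1+u)(-u/q^2;1/q^2)_\infty$ and $T_b$ via $(-u/q^2;1/q^2)_\infty(-u/q;1/q^2)_\infty = (-u/q;1/q)_\infty$ then converts the two pieces into the two infinite-product expressions displayed in the theorem. The main subtlety will be the careful bookkeeping of $q$-shifts that lets the two natural cases each fall cleanly into the $q$-binomial format.
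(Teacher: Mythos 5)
Your proposal is correct and follows essentially the same route as the paper: parts (1) and (2) are read off from the sums $S_1^{+}$ and $S_1^{-}$ already evaluated in the proof of Theorem \ref{GFSOqodd} (the paper phrases (2) as $\tfrac{1}{2}i(\SO^-(2n,q))$ and invokes Theorem \ref{GFSOqodd}(1), which amounts to the same thing). For part (3) the paper simply cites the closed form of $\sum_n u^nq^{n^2}\sum_R |\gO^+(2R,q)|^{-1}|\gO(2n+1-2R,q)|^{-1}$ from \cite{FuGuSt16} and simplifies, whereas you re-derive that identity from scratch; your derivation checks out (the exponent collapse to $-(R-m)^2-m$, the factorization $(-uq^{2R-2};1/q^2)_\infty=(-u;q^2)_R(-u/q^2;1/q^2)_\infty$, and the two $q$-binomial evaluations at $x=u^2$ and $x=u^2/q$ are all correct), and it is precisely the $q$-exponential/$q$-binomial technique the paper itself deploys for the $q\equiv 3\ (\mathrm{mod}\ 4)$ case in Theorem \ref{GFOmq3mod4}.
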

\begin{proof} For (1), as in the proof of Theorem \ref{GFSOqodd}, it was proved in \cite{FuGuSt16} that
$$ \sum_{n \geq 0} u^n q^{n^2} \sum_{R=0}^n \frac{1}{|\gO^+(2R,q)||\gO^+(2n-2R,q)|} $$
$$          = \frac{1}{4} \left[ \frac{(-u/q; 1/q^2)^2_{\infty}}{(u^2; 1/q^2)_{\infty}} + 2 \frac{(-u/q; 1/q^2)_{\infty} (-u; 1/q^2)_{\infty}}{(qu^2 ; 1/q^2)_{\infty}} + \frac{(-u; 1/q^2)^2_{\infty}}{(q^2u^2; 1/q^2)_{\infty}} \right].$$
The result follows by simplifying this expression, noting that
$$ (-u/q; 1/q^2)_{\infty} (-u; 1/q^2)_{\infty} = \prod_{i \geq 1} (1 + u/q^{2i-1}) \prod_{i \geq 1} (1 + u/q^{2i-2}) = \prod_{i \geq 1} (1 + u/q^{i-1}),$$
together with Proposition \ref{OmOdd1mod4}(1).

The generating function in (2) follows directly from Proposition \ref{OmOdd1mod4}(2) and Theorem \ref{GFSOqodd}(1), since $i(\Om^-(2n,q)) = (1/2) i(\SO^-(2n,q))$ when $q \equiv 1($mod $4)$.

For (3), we use the identity proved in \cite[CASE 1, Proof of Theorem 2.17]{FuGuSt16} that
\begin{align*}
\sum_{n \geq 0} u^n q^{n^2} & \left[ \sum_{R=0}^n \frac{1}{ |\gO^+(2R,q)| |\gO(2n+1-2R,q)|} \right]  \\
 & = \frac{1}{4} \left[\frac{(-u/q^2; 1/q^2)_{\infty} (-u ; 1/q^2)_{\infty}}{(u^2 ; 1/q^2)_{\infty}}  + \frac{(-u/q^2; 1/q^2)_{\infty} (-u/q ; 1/q^2)_{\infty}}{(u^2/q ; 1/q^2)_{\infty}}  \right] \\
&= \frac{1}{4} \left[ \frac{\prod_{i \geq 1} (1 + u/q^{2i}) \prod_{i \geq 1} (1 + u/q^{2(i-1)})}{\prod_{i \geq 1} (1 - u^2/q^{2(i-1)})} + \frac{\prod_{i \geq 1} (1 + u/q^{2i}) \prod_{i \geq 1} (1 + u/q^{2i-1})} {\prod_{i \geq 1} (1 - u^2/q^{2i-1}) } \right] \\
& = \frac{1}{4} \left[ \frac{(1 +u) \prod_{i \geq 1} (1 + u/q^{2i})^2}{ \prod_{i \geq 1} (1 - u^2/q^{2(i-1)}) } + \frac{\prod_{i \geq 1} (1 + u/q^{i})}{ \prod_{i \geq 1} (1 - u^2/q^{2i-1}) } \right].
\end{align*}
The result follows by Proposition \ref{OmOdd1mod4}(3).
\end{proof}

The last case that we deal with in this section, which is for $\Om^{\pm}(n,q)$ with $q \equiv 3($mod $4)$, does not immediately follow from calculations made in \cite{FuGuSt16}.  We apply the $q$-binomial theorem and a slight variant, in the following form.  A proof of the first statement is in \cite[pg. 17]{An}, and the second statement is \cite[Corollary 2.3]{FuGuSt16}.

\begin{lemma} \label{qbinom} If $|y| < 1$, then
\begin{enumerate}
\item[(1)]  ${\hfil \displaystyle \sum_{n \geq 0} \frac{(A; y)_n}{(y; y)_n} x^n = \frac{(Ax; y)_{\infty}}{(x; y)_{\infty}}, }$ if $|x| < 1$, and
\item[(2)] ${\hfil \displaystyle \sum_{n \geq 0} \frac{y^{\binom{n}{2}}}{(y; y)_n} x^n = \frac{1}{(-x; y)_{\infty}}. }$
\end{enumerate}
\end{lemma}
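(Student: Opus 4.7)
The plan is to prove (1) by the classical functional-equation technique that underlies the $q$-binomial theorem, and then to obtain (2) as a limiting specialization $A \to \infty$ of (1).

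For (1), I would set $f(x) := \sum_{n \geq 0} (A;y)_n x^n / (y;y)_n$ and $g(x) := (Ax;y)_\infty / (x;y)_\infty$; convergence of $f$ on $|x|<1$ is immediate from $|y|<1$. The heart of the argument is verification of the $y$-dilation identity
\[
(1-x)\, f(x) \;=\; (1-Ax)\, f(xy).
\]
This reduces to a coefficient-by-coefficient check using the recursions $(A;y)_n = (1-Ay^{n-1})(A;y)_{n-1}$ and $(y;y)_n = (1-y^n)(y;y)_{n-1}$; after telescoping, both sides agree in each coefficient of $x^n$. Since $g$ visibly satisfies the same relation (because $g(x)/g(xy) = (1-Ax)/(1-x)$), one can iterate to obtain
\[
f(x) \;=\; \prod_{k=0}^{N-1} \frac{1-Axy^k}{1-xy^k}\, f(xy^N),
\]
and letting $N \to \infty$ with $f(0) = g(0) = 1$, using uniform convergence on $|x| \leq r < 1$, identifies $f$ with $g$.

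For (2), I would apply (1) with $x$ replaced by $-x/A$ and let $A\to\infty$. The key termwise limit is
\[
\lim_{A \to \infty} \frac{(A;y)_n}{(-A)^n} \;=\; \prod_{i=0}^{n-1} y^i \;=\; y^{\binom{n}{2}},
\]
which converts each summand on the left into $y^{\binom{n}{2}} x^n /(y;y)_n$; simultaneously, the factor $(-x/A; y)_\infty$ appearing on the right tends to $1$, leaving the stated infinite product. Termwise passage to the limit is justified by dominated convergence on compact subsets of $|x|<1$.

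The main obstacle will be the coefficient bookkeeping in the functional-equation step for (1): one must treat the $n=0$ boundary term correctly and recognize that the telescoping difference $(A;y)_n/(y;y)_n - (A;y)_{n-1}/(y;y)_{n-1}$ rearranges to $y^{n-1}(y - A)(A;y)_{n-1}/(y;y)_n$, which precisely matches the corresponding contribution from $(1-Ax) f(xy)$. Once (1) is in hand, the limit argument for (2) is essentially routine justification of the interchange of sum and limit.
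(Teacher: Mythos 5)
Your argument for (1) is mathematically sound, and it is necessarily a different route from the paper's, because the paper does not prove this lemma at all: it simply cites \cite[pg.~17]{An} for (1) and \cite[Corollary 2.3]{FuGuSt16} for (2). The functional-equation proof you give is the standard one, and your coefficient bookkeeping checks out: writing $c_n = (A;y)_n/(y;y)_n$, the identity $(1-x)f(x)=(1-Ax)f(xy)$ amounts to $c_n(1-y^n)=c_{n-1}(1-Ay^{n-1})$, which is immediate from the recursions, and your rearrangement of $c_n-c_{n-1}$ as $y^{n-1}(y-A)(A;y)_{n-1}/(y;y)_n$ is exactly the contribution of $c_ny^n - Ac_{n-1}y^{n-1}$. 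The iteration with $f(0)=1$ and the convergence justifications are routine and correct for $|x|<1$, $|y|<1$.

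For (2) there is a discrepancy you must address. Your limit computation is correct, but its output is $(-x;y)_\infty$, not $1/(-x;y)_\infty$: substituting $x\mapsto -x/A$ in (1) gives right-hand side $(-x;y)_\infty/(-x/A;y)_\infty$, and after the denominator tends to $1$ you are left with $(-x;y)_\infty$ in the \emph{numerator}, so your claim that this "leaves the stated infinite product" is not literally true --- the lemma as printed displays the reciprocal. In fact the identity as printed is false: taking $y=0$ the left side is $1+x$ while $1/(-x;0)_\infty = 1/(1+x)$, and comparing coefficients of $x$ in general gives $1/(1-y)$ on the left versus $-1/(1-y)$ on the right. The correct statement is Euler's identity $\sum_{n\ge 0} y^{\binom{n}{2}}x^n/(y;y)_n = (-x;y)_\infty$, which is precisely what your limit produces and precisely the form the paper actually invokes downstream (for instance in evaluating $S_1$ in the proof of Theorem \ref{GFOmq3mod4}, where the inner sum over $n$ is replaced by $(-uq^{4R};1/q^2)_\infty$, not its reciprocal). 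So your proof is right, but you should state explicitly that you are proving the corrected version of (2) rather than asserting agreement with the displayed right-hand side.
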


We may now prove the following.

\begin{theorem} \label{GFOmq3mod4}
Let $q \equiv 3($mod $4)$.
\begin{enumerate}
\item[(1)] For $|u| < 1/q$, we have the generating function
\begin{align*}
\sum_{n \geq 0} \frac{i(\Om^{\pm}(2n,q))} {|\gO^{\pm}(2n,q)|} q^{n^2} u^n& = \frac{1}{4} \left[ \frac{\prod_{i \geq 1} (1 + u/q^{2(i-1)})^2}{ \prod_{i \geq 1} (1 - u^2/q^{2(i-2)})}  + \frac{ \prod_{i \geq 1} (1 + (-1)^{i-1}u/q^{i-1})}{ \prod_{i \geq 1} (1 + u^2/q^{2i-3})} \right.\\
&\left. \pm \frac{ \prod_{i \geq 1} (1 - (-1)^{i-1}u/q^{i-1})}{ \prod_{i \geq 1} (1 + u^2/q^{2i-3})} \pm \frac{ \prod_{i \geq 1} (1 + u/q^{2i-1})^2}{ \prod_{i \geq 1} (1 - u^2/q^{2(i-1)})}\right].
\end{align*}
\item[(2)] For $|u| < 1$, we have the generating function
$$ \sum_{n \geq 0} \frac{i(\Om(2n+1,q))}{|\gO(2n+1,q)|} q^{n^2} u^n = \frac{1}{4} \left[ \frac{(1+u)\prod_{i \geq 1} (1 +  u/q^{2i})^2} { \prod_{i \geq 1} (1 - u^2/q^{2(i-1)})} + \frac{\prod_{i \geq 1}(1 + (-1)^i u/q^{i}) } { \prod_{i \geq 1} (1 + u^2/q^{2i-1})} \right].$$
\end{enumerate}
\end{theorem}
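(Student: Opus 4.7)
The idea is to mimic the strategy of Theorems \ref{GFSOqodd} and \ref{GFOmq1mod4}, incorporating the parity restriction on $R$ arising in Proposition \ref{OmOdd3mod4}.

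\textbf{Parity decomposition.} Using $\mathbb{1}_{R \text{ even}} = (1+(-1)^R)/2$ and $\mathbb{1}_{R \text{ odd}} = (1-(-1)^R)/2$, every expression in Proposition \ref{OmOdd3mod4} can be written as $\tfrac{1}{2}i(\SO) + \tfrac{1}{2}E$, where $E$ is an ``alternating'' version obtained by inserting a factor of $(-1)^R$ into each summand of the corresponding expression in Proposition \ref{SOodd}. The $\tfrac{1}{2}i(\SO)$ contribution, via Theorem \ref{GFSOqodd}, accounts for the first and last terms of the formula in (1). For (2), a small rearrangement using $(1+u)/\prod_{i\ge1}(1-u^2/q^{2(i-1)}) = 1/[(1-u)\prod_{i\ge1}(1-u^2/q^{2i})]$ identifies $\tfrac{1}{2}$ of the $\SO(2n+1,q)$ generating function from Theorem \ref{GFSOqodd}(2) with the first term of the claimed formula.

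\textbf{Evaluating the alternating piece.} Substitute $1/|\gO^{\pm}(2m,q)| = (q^m\pm 1)/(2q^{2m^2}(1/q^2;1/q^2)_m)$ and the analog $1/|\gO(2m+1,q)| = 1/(2q^{2m^2+m}(1/q^2;1/q^2)_m)$ for part (2), expand each product $(q^R\pm 1)(q^s\pm 1)$ into its four monomials in $q^R, q^s$, and set $s = n-R$. The key identity
\[
n^2 - 2R^2 - 2(n-R)^2 = -(R-s)^2
\]
absorbs $q^{n^2}$ and reduces each remaining piece to a double sum of the form
\[
\sum_{R,s\ge 0}\frac{(-u)^R u^s q^{-(R-s)^2}q^{\alpha R + \beta s}}{(1/q^2;1/q^2)_R(1/q^2;1/q^2)_s}, \qquad (\alpha,\beta)\in\{0,1\}^2.
\]

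\textbf{Closing the sums.} Fix $s$ and rewrite the exponent as $-s^2 - R(R-1)+R(2s+\alpha-1)$, so that the $R$-sum matches the shape of Lemma \ref{qbinom}(2); this evaluates the inner sum to $q^{-s^2}$ times a $q$-Pochhammer symbol of the form $(\pm uq^{2s\pm 1};1/q^2)_\infty$. Factor out the $s$-independent tail $(u/q;1/q^2)_\infty$ using $(uq^{2s\pm 1};1/q^2)_\infty = (u/q;1/q^2)_\infty\cdot (uq;q^2)_{s}$ or $(uq;q^2)_{s+1}$, and convert $(1/q^2;1/q^2)_s = (-1)^s q^{-s(s+1)}(q^2;q^2)_s$. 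The outer $s$-sum then reduces to a sum of the form $\sum_s(A;q^2)_s(\pm u)^s/(q^2;q^2)_s$, which Lemma \ref{qbinom}(1) closes as $(Au;q^2)_\infty/(u;q^2)_\infty$. Converting the resulting products back to base $1/q^2$ via the reflection $(a;z)_s=(-a)^s z^{s(s-1)/2}(1/a;1/z)_s$ produces exactly the second term of (1) (with $+$ sign), the third term of (1) (with $\pm$ sign), and the second term of (2).

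\textbf{Main obstacle.} The chief difficulty is working with $q$-Pochhammer symbols in two different bases: since $|q^2|>1$, infinite products like $(-u;q^2)_\infty$ do not converge in the usual analytic sense, so the applications of Lemma \ref{qbinom}(1) must be read formally (as identities of power series in $u$ with coefficients in $\mathbb{Q}(q)$) and/or systematically re-expressed in base $1/q^2$ before any limit is taken. Careful sign bookkeeping through the base-change formula and attention to the boundary convention $1/|\gO^-(0,q)| = 0$ round out the computation.
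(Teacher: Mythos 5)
Your proposal is correct and runs on the same computational engine as the paper --- the same two $q$-series identities (Euler's identity for the inner sum, the $q$-binomial theorem for the outer sum) applied to the same double sum over the two eigenspace dimensions --- but it is organized differently in a way that saves work. The paper splits each count from Proposition \ref{OmOdd3mod4} by the parity of $R$ into separate sums $S_i$ (over $2R$) and $T_i$ (over $2R+1$), computes each from scratch, and only at the end recombines them into a single sum over all $R$ before closing with Lemma \ref{qbinom}(1); your indicator decomposition $i(\Om) = \tfrac12 i(\SO) + \tfrac12 E$ performs that recombination up front, so the non-alternating half is simply quoted from Theorem \ref{GFSOqodd} (and it does match the first and last terms of (1), and, after your $(1+u)$ rearrangement, the first term of (2)), leaving only the alternating piece $E$ to compute. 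I verified that your recipe for $E$ --- expand $(q^R\pm1)(q^s\pm1)$, use $n^2-2R^2-2s^2=-(R-s)^2$, close the $R$-sum by Euler, factor the resulting infinite Pochhammer, close the $s$-sum by the $q$-binomial theorem --- reproduces exactly the middle terms $(\mp u;1/q^2)_\infty(\pm u/q;1/q^2)_\infty/(-u^2q;1/q^2)_\infty$ of (1) and the second term of (2). Two small remarks. First, your detour through base $q^2$ (and the attendant convergence caveat) is avoidable: as in the paper, factor directly in base $1/q^2$ via identities of the shape $(-uq^{2s+\alpha-1};1/q^2)_\infty = u^s q^{(\cdot)}(\pm 1/uq^{(\cdot)};1/q^2)_s\,(\pm uq^{(\cdot)};1/q^2)_\infty$, which keeps every product convergent and makes $|u|<1/q$ (from $|x|=|u^2q|<1$ in Lemma \ref{qbinom}(1)) the natural radius. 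Second, you describe the inner sum as evaluating to a Pochhammer symbol rather than its reciprocal; that is the correct Euler identity $\sum_n y^{\binom{n}{2}}x^n/(y;y)_n = (-x;y)_\infty$ and is what the paper actually uses, even though Lemma \ref{qbinom}(2) as printed shows the reciprocal --- so your usage is right, but be aware of the discrepancy with the stated lemma.
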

\begin{proof}
For (1), by Proposition \ref{OmOdd3mod4}(1) and (2), we must find generating functions for
$$ \sum_{n \geq 0} u^n q^{n^2} \left[ \sum_{R=0 \atop{R \text{ even}}}^{n} \frac{1}{|\gO^+(2R,q)||\gO^+(2n-2R,q)|} + \sum_{R=1 \atop{R \text{ odd}}}^{n-1} \frac{1}{|\gO^-(2R,q)||\gO^-(2n-2R,q)|} \right]$$
and
$$ \sum_{n \geq 0} u^n q^{n^2} \left[ \sum_{R=0 \atop{R \text{ even}}}^{n-1} \frac{1}{|\gO^+(2R,q)||\gO^-(2n-2R,q)|} + \sum_{R=1 \atop{R \text{ odd}}}^{n} \frac{1}{|\gO^-(2R,q)||\gO^+(2n-2R,q)|} \right].$$
Consider first the sum
\begin{align*}
S_1  :&= \sum_{n \geq 0} u^n q^{n^2} \sum_{R=0 \atop{R \text{ even}}}^n \frac{1}{|\gO^+(2R,q)||\gO^+(2n-2R,q)|} = \sum_{n \geq 0} u^n q^{n^2} \sum_{R=0}^{\lfloor n/2 \rfloor} \frac{1}{|\gO^+(4R,q)||\gO^+(2n-4R,q)|} \\
& = \sum_{n \geq 0} u^n q^{n^2} \sum_{R=0}^{\lfloor n/2 \rfloor} \frac{(q^{2R}+1)(q^{n-2R} + 1)}{ 4 q^{8R^2} (1/q^2 ; 1/q^2)_{2R} \, q^{2(n-2R)^2} (1/q^2 ; 1/q^2)_{n-2R}} \\
& = \sum_{R \geq 0} \sum_{n \geq 0} u^n q^{n^2} \frac{(q^{2R}+1)(q^{n-2R} + 1)}{ 4 q^{8R^2} (1/q^2 ; 1/q^2)_{2R} \, q^{2(n-2R)^2} (1/q^2 ; 1/q^2)_{n-2R}}.
\end{align*}
Now replace $n$ by $n+2R$ to obtain
\begin{align*}
S_1 & = \sum_{R \geq 0} \sum_{n \geq 0} u^{n+2R} q^{(n+2R)^2} \frac{(q^{2R} + 1)(q^n + 1)}{4 q^{8R^2} (1/q^2 ; 1/q^2)_{2R} \, q^{2n^2} (1/q^2 ; 1/q^2)_n} \\
& = \sum_{R \geq 0} \sum_{n \geq 0} u^{n+2R} \frac{q^{4nR}(q^{n + 2R} + q^n + q^{2R} + 1)}{4 q^{4R^2} (1/q^2 ; 1/q^2)_{2R} \, q^{n^2} (1/q^2 ; 1/q^2)_n} \\
& = \sum_{R \geq 0} \frac{u^{2R}}{4q^{4R^2} (1/q^2 ; 1/q^2)_{2R}} \sum_{n \geq 0} \frac{q^{2R} (uq^{4R})^n + (uq^{4R})^n + q^{2R} (uq^{4R-1})^n + (uq^{4R-1})^n}{ q^{2 \binom{n}{2}} (1/q^2 ; 1/q^2)_n}.
\end{align*}
Now apply Lemma \ref{qbinom}(2) with $y=1/q^2$ to each of the terms in the summation over $n$.  This gives
$$ S_1 = \sum_{R \geq 0} u^{2R} \frac{(-uq^{4R}; 1/q^2)_{\infty} (q^{2R} + 1) + (-uq^{4R-1}; 1/q^2)_{\infty} (q^{2R} + 1)}{4 q^{4R^2} (1/q^2 ; 1/q^2)_{2R}}.$$
We apply the facts that
$$ (-uq^{4R}; 1/q^2)_{\infty} = u^{2R} q^{4R^2 + 2R} (-1/uq^2; 1/q^2)_{2R} (-u; 1/q^2)_{\infty}, \, \text{ and }$$
$$ (-uq^{4R-1} ; 1/q^2)_{\infty} = u^{2R} q^{4R^2} (-1/uq; 1/q^2)_{2R} (-u/q; 1/q^2)_{\infty},$$
to rewrite $S_1$ as
\begin{align}
S_1 = & \frac{(-u; 1/q^2)_{\infty} }{4} \sum_{R \geq 0} \frac{(u^2 q^2)^{2R} (-1/uq^2 ; 1/q^2)_{2R} + (u^2 q)^{2R} (-1/uq^2 ; 1/q^2)_{2R}}{(1/q^2 ; 1/q^2)_{2R}}  \nonumber\\
& + \frac{(-u/q; 1/q^2)_{\infty} }{4} \sum_{R \geq 0} \frac{(u^2 q)^{2R} (-1/uq ; 1/q^2)_{2R} + (u^2)^{2R} (-1/uq ; 1/q^2)_{2R}}{(1/q^2 ; 1/q^2)_{2R}}. \label{S1}
\end{align}

Now we consider the sum
\begin{align*}
T_1 :&= \sum_{n \geq 0} u^n q^{n^2} \sum_{R=1 \atop{R \text{ odd}}}^{n-1} \frac{1}{|\gO^-(2R,q)||\gO^-(2n-2R,q)|}  \\
& = \sum_{n \geq 1} u^n q^{n^2} \sum_{R=0}^{\lfloor (n-1)/2 \rfloor} \frac{1}{|\gO^-(4R+2,q)||\gO^-(2n - 4R-2,q)|} \\
%& = \sum_{n \geq 1} u^n q^{n^2} \sum_{R=0}^{\lfloor (n-1)/2 \rfloor} \frac{(q^{2R+1} - 1)(q^{n-2R-1} - 1)}{ 4 q^{2 (2R+1)^2} (1/q^2 ; 1/q^2)_{n-2R-1} \, q^{2(n-2R-1)^2} (1/q^2; 1/q^2)_{n-2R-1}} \\
& = \sum_{R \geq 0} \sum_{n \geq 1} u^n q^{n^2} \frac{(q^{2R+1} - 1)(q^{n-2R-1} - 1)}{ 4 q^{2 (2R+1)^2} (1/q^2 ; 1/q^2)_{n-2R-1} \, q^{2(n-2R-1)^2} (1/q^2; 1/q^2)_{n-2R-1}}.
\end{align*}
Replace $n$ with $n+2R+1$ to rewrite $T_1$ as
\begin{align*}
T_1 & = \sum_{R \geq 0} \sum_{n \geq 0} u^{n+2R+1} q^{(n+2R+1)^2} \frac{(q^{2R+1} - 1)(q^n - 1)}{4 q^{2(2R+1)^2} (1/q^2 ; 1/q^2)_{2R+1} q^{2n^2} (1/q^2 ; 1/q^2)_n} \\
& = \sum_{R \geq 0} \frac{u^{2R+1}}{4 q^{(2R+1)^2} (1/q^2 ; 1/q^2)_{2R+1}} \sum_{n \geq 0} \frac{q^{2R+1} (uq^{4R+2})^n - (uq^{4R+2})^n - q^{2R+1} (uq^{4R+1})^n + (uq^{4R+1})^n}{q^{2 \binom{n}{2}} (1/q^2 ; 1/q^2)_n}.
\end{align*}
As we did with $S_1$, we apply Lemma \ref{qbinom}(2) with $y=1/q^2$ to each term in the summation over $n$, and we obtain
$$ T_1 = \sum_{R \geq 0} u^{2R+1}\frac{ (-uq^{4R+2}; 1/q^2)_{\infty} (q^{2R+1} - 1) - (-uq^{4R+1}; 1/q^2)_{\infty} (q^{2R+1} - 1)}{4q^{(2R+1)^2} (1/q^2 ; 1/q^2)_{2R+1}}.$$
Now apply the identities
$$ (-uq^{4R+2}; 1/q^2)_{\infty} = u^{2R+1} q^{(2R+1)(2R+2)} (-1/uq^2; 1/q^2)_{2R+1} (-u; 1/q^2)_{\infty}, \, \text{ and }$$
$$ (-uq^{4R+1}; 1/q^2)_{\infty} = u^{2R+1} q^{(2R+1)^2} (-1/uq; 1/q^2)_{2R+1} (-u/q; 1/q^2)_{\infty}$$
to rewrite $T_1$ as
\begin{align}
T_1 = & \frac{(-u; 1/q^2)_{\infty}}{4} \sum_{R \geq 0} \frac{ (u^2 q^2)^{2R+1} (-1/uq^2; 1/q^2)_{2R+1} + (-u^2 q)^{2R+1} ( -1/uq^2; 1/q^2)_{2R+1}}{(1/q^2; 1/q^2)_{2R+1}} \nonumber \\
& + \frac{(-u/q; 1/q^2)_{\infty}}{4} \sum_{R \geq 0} \frac{ (-u^2 q)^{2R+1} (-1/uq; 1/q^2)_{2R+1} + (u^2)^{2R+1} ( -1/uq; 1/q^2)_{2R+1}}{(1/q^2; 1/q^2)_{2R+1}}. \label{T1}
\end{align}
By combining \eqref{S1} and \eqref{T1} as even and odd terms, we have
\begin{align*}
S_1 + T_1 = & \frac{(-u; 1/q^2)_{\infty}}{4} \sum_{R \geq 0} \frac{(-1/uq^2; 1/q^2)_{R}}{(1/q^2; 1/q^2)_R} \left[ (u^2 q^2)^R + (-u^2 q)^R \right] \\
& + \frac{(-u/q; 1/q^2)_{\infty}}{4} \sum_{R \geq 0} \frac{(-1/uq; 1/q^2)_{R}}{(1/q^2; 1/q^2)_R} \left[ (-u^2 q)^R + (u^2 )^R \right].
\end{align*}
Now apply Lemma \ref{qbinom}(1) to each term in the summations over $R$, with $y = 1/q^2$, $A = -1/uq^2, -1/uq$, and $x = u^2q^2, -u^2 q, -u^2$, to obtain
\begin{align}
S&_1  + T_1 =  \label{S1T1} \\
& = \frac{1}{4} \left[ \frac{(-u; 1/q^2)^2_{\infty} }{(u^2 q^2 ; 1/q^2)_{\infty}} + \frac{(-u; 1/q^2)_{\infty} (u/q; 1/q^2)_{\infty} + (u; 1/q^2)_{\infty} (-u/q ; 1/q^2)_{\infty}}{(-u^2q; 1/q^2)_{\infty}} + \frac{(-u/q; 1/q^2)^2_{\infty}}{(u^2 ; 1/q^2)_{\infty}} \right], \nonumber
\end{align}
where $|u|< 1/q$ is the smallest radius of convergence coming from Lemma \ref{qbinom}(1).

Next consider 
\begin{align*}
S_2 :&= \sum_{n \geq 0} u^n q^{n^2} \sum_{R=0 \atop{R \text{ even}}}^{n-1} \frac{1}{|\gO^+(2R,q)||\gO^-(2n-2R,q)|} = \sum_{n \geq 0} \sum_{R=0}^{\lfloor (n-1)/2 \rfloor} \frac{u^n q^{n^2}} {|\gO^+(4R,q)||\gO^-(2n-4R,q)|} \\
&  = \sum_{R \geq 0} \sum_{n \geq 0} u^n q^{n^2} \frac{(q^{2R}+1)(q^{n-2R} - 1)}{ 4 q^{8R^2} (1/q^2 ; 1/q^2)_{2R} \, q^{2(n-2R)^2} (1/q^2 ; 1/q^2)_{n-2R}}.
\end{align*}
We note this is the same expression we obtained for $S_1$, except for the $(q^{n-2R} - 1)$ factor instead of $(q^{n-2R} + 1)$ in the numerator.  Repeating the same calculations made for $S_1$, while keeping track of the sign change, gives
\begin{align}
S_2 = & \frac{(-u; 1/q^2)_{\infty} }{4} \sum_{R \geq 0} \frac{(u^2 q^2)^{2R} (-1/uq^2 ; 1/q^2)_{2R} + (u^2 q)^{2R} (-1/uq^2 ; 1/q^2)_{2R}}{(1/q^2 ; 1/q^2)_{2R}}  \nonumber\\
& - \frac{(-u/q; 1/q^2)_{\infty} }{4} \sum_{R \geq 0} \frac{(u^2 q)^{2R} (-1/uq ; 1/q^2)_{2R} + (u^2)^{2R} (-1/uq ; 1/q^2)_{2R}}{(1/q^2 ; 1/q^2)_{2R}}. \label{S2}
\end{align}
Similarly, we consider
\begin{align*}
T_2 :&= \sum_{n \geq 0} u^n q^{n^2} \sum_{R=1 \atop{R \text{ odd}}}^{n} \frac{1}{|\gO^-(2R,q)||\gO^+(2n-2R,q)|}  = \sum_{n \geq 1} \sum_{R \geq 0}  \frac{u^n q^{n^2}}{|\gO^-(4R+2, q)| |\gO^+(2n-4R-2,q)|}\\
& = \sum_{R \geq 0} \sum_{n \geq 1} u^n q^{n^2} \frac{(q^{2R+1} - 1)(q^{n-2R-1} + 1)}{ 4 q^{2 (2R+1)^2} (1/q^2 ; 1/q^2)_{n-2R-1} \, q^{2(n-2R-1)^2} (1/q^2; 1/q^2)_{n-2R-1}}.
\end{align*}
We again have an expression that is almost exactly the same as $T_1$, except for the $(q^{n-2R-1} + 1)$ factor in the numerator instead of $(q^{n-2R-1} - 1)$.  Following the calculation for $T_1$ and making the appropriate sign changes yields
\begin{align}
T_2 = & \frac{(-u; 1/q^2)_{\infty} }{4} \sum_{R \geq 0} \frac{(u^2 q^2)^{2R+1} (-1/uq^2 ; 1/q^2)_{2R+1} + (-u^2 q)^{2R+1} (-1/uq^2 ; 1/q^2)_{2R+1}}{(1/q^2 ; 1/q^2)_{2R+1}}  \nonumber\\
& - \frac{(-u/q; 1/q^2)_{\infty} }{4} \sum_{R \geq 0} \frac{(-u^2 q)^{2R+1} (-1/uq ; 1/q^2)_{2R+1} + (u^2)^{2R+1} (-1/uq ; 1/q^2)_{2R+1}}{(1/q^2 ; 1/q^2)_{2R+1}}. \label{T2}
\end{align}
From \eqref{S2} and \eqref{T2} we have
\begin{align*}
S_2 + T_2 &  =\frac{(-u; 1/q^2)_{\infty}}{4} \sum_{R \geq 0} \frac{(-1/uq^2; 1/q^2)_{R}}{(1/q^2; 1/q^2)_R} \left[ (u^2 q^2)^R + (-u^2 q)^R \right] \\
& - \frac{(-u/q; 1/q^2)_{\infty}}{4} \sum_{R \geq 0} \frac{(-1/uq; 1/q^2)_{R}}{(1/q^2; 1/q^2)_R} \left[ (-u^2 q)^R + (u^2 )^R \right].
\end{align*}
Apply Lemma \ref{qbinom}(1) for each term, and we have
\begin{align}
S&_2  + T_2 = \label{S2T2} \\
& = \frac{1}{4} \left[ \frac{(-u; 1/q^2)^2_{\infty} }{(u^2 q^2 ; 1/q^2)_{\infty}} + \frac{(-u; 1/q^2)_{\infty} (u/q; 1/q^2)_{\infty} - (u; 1/q^2)_{\infty} (-u/q ; 1/q^2)_{\infty}}{(-u^2q; 1/q^2)_{\infty}} - \frac{(-u/q; 1/q^2)^2_{\infty}}{(u^2 ; 1/q^2)_{\infty}} \right], \nonumber
\end{align}
when $|u|< 1/q$.

Finally, the expressions \eqref{S1T1} and \eqref{S2T2} simplify to the generating function claimed in (1), after noticing that
$$ (\mp u; 1/q^2)_{\infty} ( \pm u/q; 1/q^2)_{\infty} = \prod_{i \geq 1} (1 \pm u/q^{2(i-1)}) (1 \mp u/q^{2i-1}) = \prod_{i \geq 1} (1 \pm (-1)^{i-1} u/q^{i-1}).$$

For (2), by Proposition \ref{OmOdd3mod4}(3) we need the generating function for
$$ \sum_{n \geq 0} u^n q^{n^2} \left[ \sum_{R=0 \atop{R \text{ even}}}^{n} \frac{1}{|\gO^+(2R,q)||\gO(2n+1-2R,q)|} + \sum_{R=1 \atop{R \text{ odd}}}^{n} \frac{1}{|\gO^-(2R,q)||\gO(2n+1-2R,q)|} \right].$$
We proceed as we did in (1), and we first consider
\begin{align*}
S_3 :& = \sum_{n \geq 0} u^n q^{n^2} \sum_{R=0 \atop{R \text{ even}}}^{n} \frac{1}{|\gO^+(2R,q)||\gO(2n+1-2R,q)|} = \sum_{n \geq 0} \sum_{R=0}^{\lfloor n/2 \rfloor} \frac{u^n q^{n^2}}{ |\gO^+(4R,q)| |\gO(2n+1 - 4R, q)| } \\
& = \sum_{R \geq 0} \sum_{n \geq 0} \frac{u^n q^{n^2} (q^{2R} + 1)}{ 4q^{8R^2} (1/q^2 ; 1/q^2)_{2R} \, q^{2(n-2R)^2 + (n-2R)} (1/q^2 ; 1/q^2)_{n-2R}}.
\end{align*}
We replace $n$ with $n+2R$ and simplify to obtain
\begin{align*}
S_3 & = \sum_{R \geq 0} \sum_{n \geq 0} \frac{u^{n+2R} q^{(n+2R)^2} (q^{2R} + 1)}{4q^{8R^2} (1/q^2; 1/q^2)_{2R} q^{2n^2  + n} (1/q^2 ; 1/q^2)_n} \\
& = \sum_{R \geq 0} \frac{u^{2R}}{4q^{4R^2} (1/q^2 ; 1/q^2)_{2R}} \sum_{n \geq 0} \frac{q^{2R}(uq^{4R-2})^n + (uq^{4R-2})^n}{ q^{2 \binom{n}{2}} (1/q^2 ; 1/q^2)_n}.
\end{align*}
By applying Lemma \ref{qbinom}(2) to each term in the summation over $n$, we have
$$ S_3 = \sum_{R \geq 0} \frac{u^{2R} (q^{2R} + 1) (-uq^{4R-2}; 1/q^2)_{\infty}}{4q^{4R^2} (1/q^2; 1/q^2)_{2R}},$$
and from the identity
$$ (-uq^{4R-2}; 1/q^2)_{\infty} = u^{2R} q^{2R(2R-1)} (-1/u; 1/q^2)_{2R} (-u/q^2; 1/q^2)_{\infty},$$
we have
\begin{equation} \label{S3}
S_3 = \frac{(-u/q^2; 1/q^2)_{\infty} }{4} \sum_{R \geq 0} \frac{(u^2)^{2R} (-1/u; 1/q^2)_{2R} + (u^2/q)^{2R} (-1/u; 1/q^2)_{2R}}{(1/q^2; 1/q^2)_{2R}}.
\end{equation}
Next we consider
\begin{align*}
T_3 :& = \sum_{n \geq 0} u^n q^{n^2} \sum_{R=1 \atop{R \text{ odd}}}^{n} \frac{1}{|\gO^-(2R,q)||\gO(2n+1-2R,q)|} \\
& = \sum_{n \geq 1} \sum_{R=0}^{\lfloor (n-1)/2 \rfloor} \frac{u^n q^{n^2}}{ |\gO^-(4R+2,q)| |\gO(2n-1 - 4R, q)| } \\
& = \sum_{R \geq 0} \sum_{n \geq 1} \frac{u^n q^{n^2} (q^{2R+1} - 1)}{ 4q^{8R^2} (1/q^2 ; 1/q^2)_{2R} \, q^{2(n-2R)^2 + (n-2R)} (1/q^2 ; 1/q^2)_{n-2R}}.
\end{align*}
We replace $n$ with $n+2R+1$, and rewrite the expression to obtain
$$T_3 = \sum_{R \geq 0} \frac{u^{2R+1}}{4q^{(2R+1)^2} (1/q^2 ; 1/q^2)_{2R+1}} \sum_{n \geq 0} \frac{q^{2R+1}(uq^{4R})^n - (uq^{4R})^n}{q^{2\binom{n}{2}} (1/q^2 ; 1/q^2)_n}.$$
Apply Lemma \ref{qbinom}(2) again for each term in the summation over $n$ to get
$$T_3 = \sum_{R \geq 0} \frac{u^{2R+1}(q^{2R+1} - 1) (-uq^{4R}; 1/q^2)_{\infty}}{4 q^{(2R+1)^2} (1/q^2 ; 1/q^2)_{2R+1}}.$$
Now use the identity
$$ (-uq^{4R}, 1/q^2)_{\infty} = u^{2R+1} q^{2R(2R+1)} (-1/u ; 1/q^2)_{2R+1} (-u/q^2; 1/q^2)_{\infty}$$
to rewrite $T_3$ as
\begin{equation} \label{T3}
T_3 = \frac{(-u/q^2; 1/q^2)_{\infty}}{4} \sum_{R \geq 0} \frac{(u^2)^{2R+1} (-1/u; 1/q^2)_{2R+1} + (-u^2/q)^{2R+1} (-1/u; 1/q^2)_{2R+1}}{(1/q^2; 1/q^2)_{2R+1}}.
\end{equation}
From \eqref{S3} and \eqref{T3}, we have
$$S_3 + T_3 = \frac{(-u/q^2; 1/q^2)_{\infty}}{4} \sum_{R \geq 0} \frac{(u^2)^{R} (-1/u; 1/q^2)_{R} + (-u^2/q)^{R} (-1/u; 1/q^2)_{R}}{(1/q^2; 1/q^2)_{R}}.$$
Apply Lemma \ref{qbinom}(1) $y=1/q^2$, $A = -1/u$, and $x = -u^2, -u^2/q$ to the terms in the sum above to obtain
$$S_3 + T_3 = \frac{1}{4}\left [ \frac{(-u/q^2 ; 1/q^2)_{\infty}(u; 1/q^2)_{\infty}}{(-u^2 ; 1/q^2)_{\infty}} + \frac{(-u/q^2; 1/q^2)_{\infty} (u/q; 1/q^2)_{\infty} }{(-u^2/q; 1/q^2)_{\infty}}\right],$$
for $|u| < 1$, which simplifies to the desired generating function.
\end{proof}

\subsection{Groups over fields of characteristic two}

\begin{theorem} \label{GFOmqeven}
  For $q$ even and $|u| < 1/q$, we have the generating function
$$ \displaystyle \sum_{n \geq 0 }  \frac{i(\Om^{\pm}(2n, q))}{|\gO^{\pm}(2n,q)|}  q^{n^2} u^n = \frac{1}{2} \left[ \frac{ \prod_{i \geq 1} (1 + u/q^{2(i-1)}) } {  \prod_{i \geq 1} (1 - u^2/q^{2(i-2)} ) } \pm \frac{ \prod_{i \geq 1} (1 + u/q^{2i-1}) } {  \prod_{ i \geq 1} (1 - u^2/q^{2(i-1)} ) } \right].$$
\end{theorem}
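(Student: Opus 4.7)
Following the pattern used in the proof of Theorem \ref{GFOmq3mod4}, the plan is to split the generating function as $\mathcal{A}^{\pm} + \mathcal{B}$, where $\mathcal{A}^{\pm}$ collects the $A_{2j}^{\pm}$-terms and $\mathcal{B}$ collects the $B_{2j}$-terms from Proposition \ref{Omeven}(1) and (2), and to evaluate each piece using the $q$-series identities in Lemma \ref{qbinom}.

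For $\mathcal{A}^{\pm}$, I would substitute the Pochhammer-symbol forms $|\Sp(2j,q)| = q^{j(2j+1)}(1/q^2;1/q^2)_j$ and $|\gO^{\pm}(2n-4j,q)| = \frac{2q^{2(n-2j)^2}}{q^{n-2j}\pm 1}(1/q^2;1/q^2)_{n-2j}$ into $A_{2j}^{\pm}$ and check that the various $q$-exponents collapse to $-(n-2j)^2$. Setting $m = n-2j$ and swapping the order of summation, the double sum factors as
\[
\mathcal{A}^{\pm} = \frac{1}{2}\biggl(\sum_{j \geq 0} \frac{u^{2j}}{(1/q^2;1/q^2)_j}\biggr)\sum_{m \geq 0}\frac{u^m(q^m \pm 1)}{q^{m^2}(1/q^2;1/q^2)_m}.
\]
Lemma \ref{qbinom}(1) with $A = 0$ evaluates the outer factor to $1/(u^2;1/q^2)_\infty$, and after splitting $q^m \pm 1$, the inner sum evaluates to $(-u;1/q^2)_\infty \pm (-u/q;1/q^2)_\infty$ by Lemma \ref{qbinom}(2) with $y = 1/q^2$ (using $q^{-m(m-1)} = (1/q^2)^{\binom{m}{2}}$). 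A parallel reindexing for $\mathcal{B}$, with $j \geq 1$ and $m = n-2j$, produces an overall prefactor of $u^2 q^2$ and yields $\mathcal{B} = u^2 q^2 (-u;1/q^2)_\infty / \bigl(2(u^2q^2;1/q^2)_\infty\bigr)$.

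To combine, I would use the identity $(u^2q^2;1/q^2)_\infty = (1-u^2q^2)(u^2;1/q^2)_\infty$ to bring $\mathcal{A}^{\pm}+\mathcal{B}$ over the common denominator $2(u^2q^2;1/q^2)_\infty$. The relation $(1-u^2q^2) + u^2 q^2 = 1$ then collapses the coefficient of $(-u;1/q^2)_\infty$, producing a fraction with denominator $(u^2q^2;1/q^2)_\infty$, while the $(-u/q;1/q^2)_\infty$ piece retains its denominator $(u^2;1/q^2)_\infty$ after cancelling the $(1-u^2q^2)$ factor; recognizing $(u^2q^2;1/q^2)_\infty = \prod_{i \geq 1}(1 - u^2/q^{2(i-2)})$ and $(u^2;1/q^2)_\infty = \prod_{i \geq 1}(1 - u^2/q^{2(i-1)})$ matches the claimed formula. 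The main obstacle will be the careful bookkeeping of $q$-exponents inside $B_{2j}$, whose definition contains several offset factors that must be reconciled before the cancellation with $\mathcal{A}^{\pm}$ becomes apparent.
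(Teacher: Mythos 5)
Your proposal is correct and follows essentially the same route as the paper's proof: decompose via Proposition \ref{Omeven} into the $A_k^{\pm}$- and $B_k$-sums, reindex so the $q$-exponents collapse to $-(n-2j)^2$, evaluate the resulting sums by Euler's identities, and recombine using $(u^2q^2;1/q^2)_\infty = (1-u^2q^2)(u^2;1/q^2)_\infty$. The only (cosmetic) difference is that you factor each double sum into a product of two single sums and invoke Lemma \ref{qbinom} directly, where the paper instead cites the equivalent evaluations from \cite[Lemma 2.5 and proof of Theorem 2.18]{FuGuSt16}.
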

\begin{proof}  Let $A_k^{\pm}$ and $B_k$ be as in Proposition \ref{Oeven}.  We use the computations made in \cite[Proof of Theorem 2.18]{FuGuSt16}.  We have
$$ \sum_{n \geq 0} u^n q^{n^2} \sum_{k=2 \atop{k \text{ even}}}^{n} \frac{1}{B_k} = \sum_{n \geq 0} u^n q^{n^2} \sum_{k=0 \atop{k \text{ even}}}^{n} \frac{1}{2q^{k(k+1)/2 + (k-1)(2n-2k)-1} |\Sp(k-2,q)| |\Sp(2n-2k,q)|}.$$
By substituting $2R+2$ for $k$ and using $|\Sp(2n,q)| = q^{2n^2 + n} (1/q^2 ; 1/q^2)_n$, an application of \cite[Lemma 2.5]{FuGuSt16} yields
\begin{align*}
\sum_{n \geq 0} u^n q^{n^2} &\sum_{k=2 \atop{k \text{ even}}}^{n} \frac{1}{B_k} \\
& = \frac{q^4}{2} \sum_{n \geq 0} u^n q^{n^2} \sum_{R=0}^{\lfloor (n-2)/2 \rfloor} \frac{q^{8R-2n}}{q^{4nR-6R^2} q^{2R^2} (1/q^2; 1/q^2)_R \, q^{2(n-2R-2)^2} (1/q^2; 1/q^2)_{n-2R-2}}\\
& = \frac{u^2 q^2}{2} \frac{(-u; 1/q^2)_{\infty}}{(u^2 q^2 ; 1/q^2)_{\infty}} = \frac{u^2 q^2}{2} \frac{ \prod_{i \geq 1} (1 + u/q^{2(i-1)}) } { \prod_{i \geq 1} (1 - u^2/q^{2(i-2)})}.
\end{align*}
Next, we have
$$ \sum_{n \geq 0} u^n q^{n^2} \sum_{k = 0 \atop{k \text{ even}}}^{n} \frac{1}{A_k^+} = \sum_{n \geq 0} u^n q^{n^2} \sum_{k=0 \atop{k \text{ even}}}^{n} \frac{1}{q^{k(k-1)/2 + k(2n-2k)} |\Sp(k,q)| |\gO^+(2n-2k,q)|}.$$
Again, following \cite[Proof of Theorem 2.18]{FuGuSt16}, we use that $|\gO^+(2n,q)| = \frac{2q^{2n^2}}{q^n + 1} (1/q^2 ; 1/q^2)_n$, substitute $2R$ for $k$ to obtain
$$\sum_{n \geq 0} u^n q^{n^2}  \sum_{k = 0 \atop{k \text{ even}}}^{n} \frac{1}{A_k^+} 
 = \frac{1}{2} \sum_{n \geq 0}  u^n q^{n^2 - n} \sum_{R=0}^{\lfloor n/2 \rfloor} \frac{q^{2n - 2R} + q^n}{ q^{4nR-6R^2} q^{2R^2} (1/q^2 ; 1/q^2)_R \, q^{2(n-2R)^2} (1/q^2 ; 1/q^2)_{n-2R}}.$$
Applying \cite[Lemma 2.5]{FuGuSt16} twice, once for each term in the numerator, gives us
\begin{align*}
\sum_{n \geq 0} u^n q^{n^2} & \sum_{k = 0 \atop{k \text{ even}}}^{n} \frac{1}{A_k^+} = \frac{1}{2} \frac{(-u; 1/q^2)_{\infty}}{(u^2; 1/q^2)_{\infty}} + \frac{1}{2} \frac{(-u/q; 1/q^2)_{\infty}}{(u^2 ; 1/q^2)_{\infty}}\\
& = \frac{1}{2} \frac{ \prod_{i \geq 1} (1 + u/q^{2(i-1)})}{\prod_{i \geq 1} (1 - u^2/q^{2(i-1)})} + \frac{1}{2} \frac{ \prod_{i \geq 1} (1 + u/q^{2i-1})} { \prod_{i \geq 1} (1 - u^2/q^{2(i-1)})}.
\end{align*}
For the case of $A_k^-$ terms, we use the convention that $1/|\gO^-(0,q)| = 0$ so that we can apply the same computations as above, which gives
\begin{align*}
\sum_{n \geq 0} u^n & q^{n^2}   \sum_{k = 0 \atop{k \text{ even}}}^{n-1} \frac{1}{A_k^-} = \sum_{n \geq 0} u^n q^{n^2}  \sum_{k = 0 \atop{k \text{ even}}}^{n} \frac{1}{A_k^-} \\
& = \frac{1}{2} \sum_{n \geq 0}  u^n q^{n^2 - n} \sum_{R=0}^{\lfloor n/2 \rfloor} \frac{q^{2n - 2R} - q^n}{ q^{4nR-6R^2} q^{2R^2} (1/q^2 ; 1/q^2)_R \, q^{2(n-2R)^2} (1/q^2 ; 1/q^2)_{n-2R}} \\
& = \frac{1}{2} \frac{(-u; 1/q^2)_{\infty}}{(u^2; 1/q^2)_{\infty}} - \frac{1}{2} \frac{(-u/q; 1/q^2)_{\infty}}{(u^2 ; 1/q^2)_{\infty}} = \frac{1}{2} \frac{ \prod_{i \geq 1} (1 + u/q^{2(i-1)})}{\prod_{i \geq 1} (1 - u^2/q^{2(i-1)})} - \frac{1}{2} \frac{ \prod_{i \geq 1} (1 + u/q^{2i-1})} { \prod_{i \geq 1} (1 - u^2/q^{2(i-1)})}.
\end{align*}
From all of the above, Proposition \ref{Omeven}(1) and (2), and the fact that
\begin{align*}
\frac{u^2 q^2}{2} & \frac{ \prod_{i \geq 1} (1 + u/q^{2(i-1)}) } { \prod_{i \geq 1} (1 - u^2/q^{2(i-2)})} + \frac{1}{2} \frac{ \prod_{i \geq 1} (1 + u/q^{2(i-1)})}{\prod_{i \geq 1} (1 - u^2/q^{2(i-1)})} \\
& = \frac{u^2 q^2}{2} \frac{ \prod_{i \geq 1} (1 + u/q^{2(i-1)}) } { \prod_{i \geq 1} (1 - u^2/q^{2(i-2)})} + \frac{1 - u^2 q^2 }{2} \frac{ \prod_{i \geq 1} (1 + u/q^{2(i-1)})}{\prod_{i \geq 1} (1 - u^2/q^{2(i-2)})} = \frac{1}{2} \frac{ \prod_{i \geq 1} (1 + u/q^{2(i-1)})}{\prod_{i \geq 1} (1 - u^2/q^{2(i-2)})},
\end{align*}
the claimed generating function follows.
\end{proof}

We note the results in Theorem \ref{OOmqeven} hold for all $n$ if and only if the character degree sum of $\Om^{\pm}(4m,q)$ is $|\gO^{\pm}(4m,q)|/q^{4m^2}$ times the coefficient of $u^{2m}$ of the generating function in Theorem \ref{GFOmqeven}, and the character degree sum of $\Om^{\pm}(4m+2,q)$ is $|\gO^{\pm}(4m+2,q)|/q^{(2m+1)^2}$ times the coefficient of $u^{2m+1}$ in the following generating function.

\begin{theorem} \label{GFO-Omqeven}
For $q$ even and $|u| < 1/q$, we have the generating function
$$\sum_{n \geq 0} \frac{i(\gO^{\pm}(2n,q) \setminus \Om^{\pm}(2n,q))}{|\gO^{\pm}(2n,q)|}q^{n^2} u^n = \frac{uq}{2} \frac{ \prod_{i \geq 1} (1 + u/q^{2i-1}) }{ \prod_{i \geq 1} (1 - u^2/q^{2(i-2)}) }.$$
\end{theorem}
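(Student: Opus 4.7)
The cleanest plan is to exploit the decomposition $i(\gO^{\pm}(2n,q)) = i(\Om^{\pm}(2n,q)) + i(\gO^{\pm}(2n,q) \setminus \Om^{\pm}(2n,q))$ and compute the claimed generating function as a difference. The generating function for $\sum_{n \geq 0} i(\gO^{\pm}(2n,q))/|\gO^{\pm}(2n,q)|\,q^{n^2} u^n$ when $q$ is even is supplied by \cite[Theorem 2.18]{FuGuSt16}, and the generating function for $\sum_{n \geq 0} i(\Om^{\pm}(2n,q))/|\gO^{\pm}(2n,q)|\,q^{n^2} u^n$ has just been established in Theorem \ref{GFOmqeven}. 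Subtracting the second from the first should yield the desired expression. This mirrors the strategy used to prove Theorem \ref{GFO-SOqodd} for the analogous coset $\gO^{\pm}(2n,q) \setminus \SO^{\pm}(2n,q)$ in the $q$ odd case.

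As a more self-contained alternative, one can work directly from Proposition \ref{Omeven}(3), which gives
$$\frac{i(\gO^{\pm}(2n,q) \setminus \Om^{\pm}(2n,q))}{|\gO^{\pm}(2n,q)|} = \sum_{k=1,\,k\text{ odd}}^{n} \frac{1}{C_k}.$$
This expression is visibly sign-independent, consistent with the absence of $\pm$ on the right-hand side of the theorem. Substituting $k = 2R+1$, expanding $|\Sp(2R,q)| = q^{2R^2+R}(1/q^2;1/q^2)_R$ and $|\Sp(2n-4R-2,q)| = q^{2(n-2R-1)^2 + (n-2R-1)}(1/q^2;1/q^2)_{n-2R-1}$, swapping the order of summation, and shifting $n \mapsto n + 2R+1$ reduces the inner sum over $n$ to a $q$-series that can be evaluated by \cite[Lemma 2.5]{FuGuSt16}, exactly the tool used in the proof of Theorem \ref{GFOmqeven}. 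The factor of $uq$ in the final answer arises from the shift $n \mapsto n+2R+1$ combining with the exponent $q^{n^2}$ and the $q$-power in $C_k$.

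The main obstacle in either route is the bookkeeping of $q$-exponents during the index shifts, together with the task of verifying that the resulting ratio of $q$-Pochhammer symbols telescopes into the compact form $\prod_{i \geq 1}(1+u/q^{2i-1})/\prod_{i \geq 1}(1-u^2/q^{2(i-2)})$ rather than a sum of two such terms (as might be expected by analogy with the two-term expressions arising in the $A_k^{\pm}$ computations of Theorem \ref{GFOmqeven}). The fact that only a single product survives here reflects that the $C_k$ terms correspond to only one Witt-type configuration in the involution decomposition, so in fact the algebra is cleaner than the $A_k^{\pm}$ or $B_k$ cases, and the difference method should yield the stated identity with only a handful of $q$-Pochhammer cancellations.
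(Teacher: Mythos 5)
Your first route---subtracting the generating function of Theorem \ref{GFOmqeven} from the $q$-even generating function for $i(\gO^{\pm}(2n,q))$ taken from \cite[Theorems 2.18 and 2.19]{FuGuSt16}---is exactly the paper's proof, so the proposal matches. One caveat: if you actually carry out the subtraction, the $\pm$ terms cancel and the remaining terms combine to $\frac{uq}{2}\prod_{i \geq 1}(1+u/q^{2(i-1)})/\prod_{i \geq 1}(1-u^2/q^{2(i-2)})$, i.e.\ with numerator $\prod_{i \geq 1}(1+u/q^{2(i-1)})$ rather than the printed $\prod_{i \geq 1}(1+u/q^{2i-1})$; checking the coefficients of $u^2$ and $u^3$ directly against Proposition \ref{Omeven}(3) (and against the $q^9 \mp q^6$ entry in the Appendix) confirms that $2(i-1)$ is the correct exponent and the displayed statement contains a typo, so do not be alarmed when your subtraction disagrees with the formula as printed.
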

\begin{proof} From \cite[Theorems 2.18 and 2.19]{FuGuSt16} and Proposition \ref{Oeven}, we have for $q$ even and $|u| < 1/q$,
$$\sum_{n \geq 0} \frac{i(\gO^{\pm}(2n,q))}{|\gO^{\pm}(2n,q)|} q^{n^2} u^n = \frac{1}{2(1-uq)} \frac{\prod_{i \geq 1} (1 + u/q^{2(i-1)})}{\prod_{i \geq 1} (1 -u^2/q^{2(i-1)})} \pm \frac{1}{2} \frac{ \prod_{i \geq 1} (1 + u/q^{2i-1})}{ \prod_{i \geq 1} (1 - u^2/q^{2(i-1)})}.$$
The result is obtained by taking the difference of this and the generating function in Theorem \ref{GFOmqeven}.
\end{proof}

\section{Asymptotics on Involutions} \label{Asymptotics}

In this section, we compute the asymptotic behavior of the number of involutions in the groups $\SO^{\pm}(n,q)$ and $\Om^{\pm}(n,q)$ when $q$ is fixed and $n$ grows.  The main tool we use  is the following result of Darboux, which is the same method used in \cite{FuGuSt16}, and which may be found in \cite{Odl}.  The idea is to treat the generating functions in the previous section as analytic functions, and then equate coefficients of power series.  If $f(u)$ is a power series in $u$, then we denote by $[u^n]f(u)$ the coefficient of $u^n$ in this power series.

\begin{lemma} \label{darboux}
Suppose for some $r>0$ that $f(u)$ is analytic on $|u|<r$ and has a finite number of simple poles on $|u|=r$, denoted by $w_j$.  Suppose we may write $f(u) = \sum_j \frac{g_j(u)}{1 - u/w_j}$ with $g_j(u)$ analytic in a neighborhood of $w_j$.  Then we have
$$ [u^n] f(u) = \sum_j \frac{g_j(w_j)}{w_j^n} + o(1/r^n).$$
\end{lemma}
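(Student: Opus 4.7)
The plan is to isolate the contribution to $[u^n]f(u)$ coming from each simple pole $w_j$ on the circle $|u|=r$ and to absorb the remainder into a function analytic on a strictly larger disk. Specifically, I would set
$$ P(u) = \sum_j \frac{g_j(w_j)}{1 - u/w_j}, \qquad R(u) = f(u) - P(u),$$
so that the geometric series $\frac{1}{1-u/w_j} = \sum_{n \geq 0} u^n / w_j^n$ immediately gives $[u^n]P(u) = \sum_j g_j(w_j)/w_j^n$, the asserted main term. The lemma thus reduces to showing $[u^n]R(u) = o(1/r^n)$.

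For this, the key step is to prove that $R(u)$ extends analytically from $|u|<r$ to a disk of radius strictly greater than $r$. Using the hypothesized decomposition of $f$, I would rewrite
$$ R(u) = \sum_j \frac{g_j(u) - g_j(w_j)}{1 - u/w_j}.$$
For each $j$, since $g_j$ is analytic in a neighborhood of $w_j$, the numerator $g_j(u)-g_j(w_j)$ vanishes at $u=w_j$ and cancels the simple zero of $1-u/w_j$ in the denominator, so the $j$th summand has a removable singularity at $w_j$; the remaining summands, having their only poles at other $w_k \neq w_j$, are analytic at $w_j$. After removing each of the finitely many removable singularities, $R$ is analytic on the closed disk $|u|\leq r$, and hence by compactness on some open disk $|u| < r + \delta$ with $\delta > 0$. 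Cauchy's inequality on a circle of radius $r + \delta/2$ then yields $[u^n]R(u) = O((r+\delta/2)^{-n}) = o(r^{-n})$, and combining with the evaluation of $[u^n]P(u)$ completes the proof.

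The main obstacle is the removability argument at each $w_j$, which quietly requires that each $g_k$ be analytic not only at its own $w_k$ but also on a neighborhood of every other $w_j$; this is the substantive content of the ``decomposition'' hypothesis and is what makes the local singular behavior of $f$ at each $w_j$ be exactly $g_j(w_j)/(1 - u/w_j)$ plus a holomorphic remainder. Once this is granted, the passage from the closed disk to a strictly larger open disk via compactness, and the coefficient bound from Cauchy's inequality, are standard. In the applications in Section~\ref{Asymptotics}, the functions $g_j$ arising from the generating functions of Section~\ref{GenFuns} will involve infinite $q$-Pochhammer products that are visibly analytic wherever needed, so the hypothesis will be routine to verify case by case.
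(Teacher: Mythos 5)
The paper does not actually prove this lemma: it is quoted as Darboux's method with a citation to Odlyzko \cite{Odl}, so there is no internal proof to compare against, and I will instead compare your argument with the standard one. Your subtraction $f=P+R$ with $P(u)=\sum_j g_j(w_j)/(1-u/w_j)$, the evaluation $[u^n]P(u)=\sum_j g_j(w_j)w_j^{-n}$, and the removable-singularity computation at each $w_j$ are all correct. The one step that does not follow from the hypotheses as literally stated is the promotion of $R$ to a function analytic on a disk $|u|<r+\delta$: the lemma only assumes $f$ is analytic on the open disk and has simple poles at the finitely many $w_j$, and says nothing about the behavior of $f$ (or of the $g_k$) at the remaining points of the circle $|u|=r$, so the analytic continuation of $R$ across the whole circle is an added assumption rather than a consequence. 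You correctly flag the cross-term issue (each $g_k$ must be analytic near every $w_j$, not just its own $w_k$), but the same strengthening is needed at the non-pole boundary points as well. The classical Darboux argument avoids this entirely: it requires only that $R$ be analytic on $|u|<r$ and continuous on $|u|\le r$, whereupon $r^n\,[u^n]R(u)=\frac{1}{2\pi}\int_0^{2\pi}R(re^{i\theta})e^{-in\theta}\,d\theta\to 0$ by the Riemann--Lebesgue lemma, which is exactly the stated $o(1/r^n)$. So the trade-off is: your route buys the stronger geometric error bound $O\bigl((r+\delta/2)^{-n}\bigr)$ at the cost of a stronger continuation hypothesis, while the standard route gets only $o(1/r^n)$ but from (essentially) the stated hypotheses. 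In every application in Section~\ref{Asymptotics} the generating functions are quotients of infinite products that are visibly meromorphic on a disk of radius at least $\sqrt{q}>1$ with only $u=\pm1$ (or $u=1$) on the unit circle, so your stronger hypothesis is satisfied there and your argument suffices for everything the paper uses the lemma for.
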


The following result is implicit in \cite{FuGuSt16}, and follows directly from Lemma \ref{darboux}.  We state it here for convenience, as it is used repeatedly. 

\begin{corollary} \label{darbouxcor}
If $f$ satisfies the assumptions of Lemma \ref{darboux} with $r > 1$, then
$$\lim_{n \rightarrow \infty} [u^n]f(u) = 0.$$
\end{corollary}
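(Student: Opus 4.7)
The plan is to deduce this directly from Lemma \ref{darboux} by taking $n \to \infty$ in the asymptotic expression it supplies. The assumption that $r > 1$ makes both the explicit main terms and the error term in Lemma \ref{darboux} vanish in the limit, so there is essentially no content beyond bookkeeping.

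First, I would invoke Lemma \ref{darboux} to write
$$[u^n] f(u) = \sum_j \frac{g_j(w_j)}{w_j^n} + o(1/r^n),$$
where the sum is over the finite set of simple poles $w_j$ on the circle $|u| = r$. Since each $w_j$ satisfies $|w_j| = r > 1$, we have $|w_j|^{-n} = r^{-n} \to 0$ as $n \to \infty$, and because there are only finitely many poles and each $g_j(w_j)$ is a fixed complex number, the finite sum $\sum_j g_j(w_j)/w_j^n$ tends to $0$.

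The error term $o(1/r^n)$ also tends to $0$ since $1/r^n \to 0$. Combining these two observations gives $\lim_{n \to \infty} [u^n] f(u) = 0$, as claimed.

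The argument has no real obstacle, it is essentially just the observation that every pole lies strictly outside the unit disk, so every contribution in the Darboux expansion decays geometrically. The only thing to be careful about is citing Lemma \ref{darboux} in the exact form stated (simple poles on $|u| = r$, with the decomposition $f(u) = \sum_j g_j(u)/(1 - u/w_j)$) so that the constants $g_j(w_j)$ are well-defined and finite.
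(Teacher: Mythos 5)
Your argument is correct and is exactly the intended one: the paper gives no explicit proof, stating only that the corollary "follows directly from Lemma \ref{darboux}," and your deduction — each term $g_j(w_j)/w_j^n$ decays like $r^{-n}$ with $r>1$, the sum is finite, and the $o(1/r^n)$ error also vanishes — is precisely that direct deduction.
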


Since the proofs of the results in this section are very similar to each other, we give all details in the first proof, and give fewer details for the proofs that follow.  

\subsection{Groups over fields of odd characteristic}

\begin{theorem} \label{AsSOqodd} If $q$ is odd and fixed, then
$$ \lim_{m \rightarrow \infty} \frac{i(\SO^{\pm}(4m,q))}{q^{(2m)^2}} = \frac{1}{2} \left( \prod_{i \geq 1} (1 + 1/q^{2i-1})^2 + \prod_{i \geq 1} (1 - 1/q^{2i-1})^2 \right),$$
and
$$ \lim_{m \rightarrow \infty} \frac{i(\SO^{\pm}(4m+2,q))}{q^{(2m+1)^2}} = \frac{1}{2} \left( \prod_{i \geq 1} (1 + 1/q^{2i-1})^2 - \prod_{i \geq 1} (1 - 1/q^{2i-1})^2 \right).$$
\end{theorem}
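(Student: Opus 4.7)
The plan is to apply Darboux's Lemma (Lemma \ref{darboux}) to the generating function
\[ f^{\pm}(u) \; := \; \sum_{n \geq 0} \frac{i(\SO^{\pm}(2n,q))}{|\gO^{\pm}(2n,q)|} q^{n^2} u^n \]
from Theorem \ref{GFSOqodd}(1). Using the order formula $|\gO^{\pm}(2n,q)| = \frac{2 q^{2n^2}}{q^n \pm 1} (1/q^2;1/q^2)_n$ noted at the start of Section \ref{GenFuns}, we have
\[ \frac{i(\SO^{\pm}(2n,q))}{q^{n^2}} \; = \; \frac{2(1/q^2;1/q^2)_n}{q^n \pm 1} \cdot [u^n] f^{\pm}(u), \]
so I would first obtain an asymptotic for $[u^n] f^{\pm}(u)$, then multiply by this prefactor and specialize to $n = 2m$ and $n = 2m + 1$ to recover the two claimed limits.

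The key singular analysis is the following. In the first fraction of $f^{\pm}(u)$, the denominator factor $1 - u^2 q^2$ (the $i=1$ factor of $\prod_{i \geq 1}(1-u^2/q^{2(i-2)})$) supplies simple poles at $u = \pm 1/q$; the $i=2$ factor $1 - u^2$ supplies a simple pole at $u = 1$, while the potential pole at $u = -1$ is cancelled by the double zero $(1+u)^2$ coming from the $i=1$ factor of the numerator. The second fraction is analytic on $|u| < 1$. Thus the dominant singularities of $f^{\pm}(u)$ on the disk $|u| \leq 1/q$ are exactly the two simple poles $\pm 1/q$, and these come only from the first fraction; in particular the $\pm$ distinguishing $\SO^+$ from $\SO^-$ feeds only into the subdominant second fraction, which is why both limits agree.

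Writing $1 - u^2 q^2 = (1-uq)(1+uq)$ and setting
\[ H(u) \; := \; \frac{\prod_{i \geq 1}(1 + u/q^{2(i-1)})^2}{\prod_{i \geq 2}(1 - u^2/q^{2(i-2)})}, \]
which is analytic in a neighborhood of $\{\pm 1/q\}$, the first fraction of $f^{\pm}(u)$ equals $\frac{H(u)/2}{1 - uq} + \frac{H(u)/2}{1 + uq}$. Lemma \ref{darboux} applied to this (with the second fraction, which is analytic on $|u| < 1$, giving only an $O(1) = o(q^n)$ contribution to $[u^n]$) yields
\[ [u^n] f^{\pm}(u) \; = \; \frac{q^n}{4}\bigl(H(1/q) + (-1)^n H(-1/q)\bigr) + o(q^n). \]
The reindexing $j = i-1$ gives $\prod_{i \geq 2}(1 - 1/q^{2i-2}) = (1/q^2;1/q^2)_\infty$, hence $H(\pm 1/q) = \prod_{i \geq 1}(1 \pm 1/q^{2i-1})^2 / (1/q^2;1/q^2)_\infty$.

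Substituting this asymptotic into the displayed identity for $i(\SO^{\pm}(2n,q))/q^{n^2}$, and using $(1/q^2;1/q^2)_n \to (1/q^2;1/q^2)_\infty$ and $q^n/(q^n \pm 1) \to 1$, the two infinite products $(1/q^2;1/q^2)_\infty$ cancel and the residual $\pm$ disappears, leaving $\tfrac{1}{2}[H(1/q) \cdot (1/q^2;1/q^2)_\infty + (-1)^n H(-1/q) \cdot (1/q^2;1/q^2)_\infty]$, which specializes to the two stated limits for $n = 2m$ (where $(-1)^n = 1$) and $n = 2m+1$ (where $(-1)^n = -1$). The main obstacle will be bookkeeping: verifying that the would-be pole of the first fraction at $u = -1$ is genuinely cancelled by the numerator's double zero, and checking that every singularity other than $\pm 1/q$ lies on $|u| \geq 1$, so that the $o(q^n)$ error term of Darboux truly absorbs all of them.
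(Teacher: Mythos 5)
Your proof is correct and follows essentially the same route as the paper's: both isolate the two dominant simple poles of the generating function in Theorem \ref{GFSOqodd}(1) by partial fractions, apply Lemma \ref{darboux} to that piece, and observe that the $\pm$-dependent second quotient only contributes to the error term. The sole difference is cosmetic --- the paper first substitutes $u \mapsto u/q$ so that the dominant poles sit at $\pm 1$ instead of $\pm 1/q$, whereas you work in the original variable with $r = 1/q$.
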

\begin{proof}  First replace $u$ by $u/q$ in the generating function in Theorem \ref{GFSOqodd} to obtain
\begin{align}
\sum_{n \geq 0}  \frac{i(\SO^{\pm}(2n,q))}{|\gO^{\pm}(2n,q)|}  q^{n^2 - n} u^n & = \sum_{n \geq 0} u^n \frac{i(\SO^{\pm}(2n,q))}{2 q^{n^2} (1 \mp 1/q^n)(1 - 1/q^2) \cdots (1 - 1/q^{2(n-1)})} \label{leftside1}\\
& =  \frac{1}{2} \left[ \frac{ \prod_{i \geq 1} (1 + u/q^{2i-1})^2 }{\prod_{i \geq 1} (1 - u^2/q^{2(i-1)} ) } \pm \frac{ \prod_{i \geq 1} (1 + u/q^{2i})^2 }{ \prod_{i \geq 1} (1 - u^2/q^{2i}) } \right] \label{rightside1}
\end{align}
We now consider the limit as $n \rightarrow \infty$ of the coefficient of $u^n$ in both \eqref{leftside1} and \eqref{rightside1}.  In \eqref{rightside1}, first note that the second quotient of products is analytic for $|u|<q$, where $q > 1$.  It follows from Corollary \ref{darbouxcor} that the limit as $n \rightarrow \infty$ of the coefficient of $u^n$ in this expression is $0$.  The first quotient of products in \eqref{rightside1} is analytic in the unit disk, and has simple poles at $u=1$ and $u=-1$ (which we take as $w_1$ and $w_2$ in applying Lemma \ref{darboux} here).  By factoring out $1/(1-u^2)$, we may rewrite this quotient of products as 
$$ \frac{1}{4} \frac{ \prod_{i \geq 1} (1 + u/q^{2i-1})^2 }{\prod_{i \geq 1} (1 - u^2/q^{2i} ) } \left[ \frac{1}{1+u} + \frac{1}{1-u} \right],$$
where the quotient of products outside of the brackets is analytic in neighborhoods of $1$ and $-1$ (and this expression is $g_1(u)$ and $g_2(u)$ in Lemma \ref{darboux}).  By Lemma \ref{darboux}, the coefficient of $u^n$ is
$$\frac{1}{4} \left( \frac{ \prod_{i \geq 1} (1 + 1/q^{2i-1})^2 + (-1)^n \prod_{i \geq 1} (1 - 1/q^{2i-1})^2 }{ \prod_{i \geq 1} (1 - 1/q^{2i}) } \right) + o(1).$$
If we take either $n=2m$ or $n=2m+1$, this gives the limit as $m \rightarrow \infty$ of the coefficient of $u^n$ in \eqref{rightside1} to be
\begin{equation} \label{rightside1lim}
\frac{1}{4} \left( \frac{ \prod_{i \geq 1} (1 + 1/q^{2i-1})^2 \pm \prod_{i \geq 1} (1 - 1/q^{2i-1})^2 }{ \prod_{i \geq 1} (1 - 1/q^{2i}) } \right),
\end{equation}
where we take $+$ if $n=2m$ and $-$ if $n=2m+1$.

If we take either $n=2m$ or $n = 2m+1$, we compute the limit of the coefficient of $u^n$ in \eqref{leftside1} to be
\begin{align} 
\lim_{m \rightarrow \infty} \frac{i(\SO^{\pm}(2n,q))}{2 q^{n^2} (1 \mp 1/q^n)(1 - 1/q^2) \cdots (1 - 1/q^{2(n-1)})}& = \frac{1}{2 \prod_{i \geq 1} (1 - 1/q^{2i})} \lim_{m \rightarrow \infty} \frac{i(\SO^{\pm}(2n,q))}{q^{n^2}(1 \mp 1/q^n)} \nonumber \\
= \frac{1}{2 \prod_{i \geq 1} (1 - 1/q^{2i})} \lim_{m \rightarrow \infty} \frac{i(\SO^{\pm}(2n,q))}{q^{n^2}} \label{leftside1lim}
\end{align}
Equating the expressions in \eqref{rightside1lim} and \eqref{leftside1lim} gives the result.
\end{proof}

From Theorem \ref{AsSOqodd} and the asymptotics for the number of involutions in $\Sp(2n,q)$ computed in \cite[Theorem 5.2]{FuGuSt16}, we have the following interesting result that the number of involutions in $\SO^{\pm}(2n,q)$ and $\Sp(2n,q)$ are asymptotically equal for $q$ odd, although the numbers of involutions in these groups are not equal in general.  We note that in the case that $n$ is odd, this result follows by dividing both sides of \cite[Theorem 5.2(2)]{FuGuSt16} by $q$.

\begin{corollary} \label{SOSpsame}
Let $q$ be odd and fixed.  Then
$$ \lim_{n \rightarrow \infty} \frac{i(\SO^{\pm}(2n,q))}{i(\Sp(2n,q))} = 1.$$
\end{corollary}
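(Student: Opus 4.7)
The plan is to combine Theorem \ref{AsSOqodd} with \cite[Theorem 5.2]{FuGuSt16} and split the limit according to the parity of $n$. For the even subsequence $n = 2m$, Theorem \ref{AsSOqodd} yields
\[
\frac{i(\SO^{\pm}(4m,q))}{q^{(2m)^2}} \longrightarrow \tfrac12\Bigl(\prod_{i \geq 1}(1 + q^{-(2i-1)})^2 + \prod_{i \geq 1}(1 - q^{-(2i-1)})^2\Bigr),
\]
and \cite[Theorem 5.2(1)]{FuGuSt16} produces the identical limit for $i(\Sp(4m,q))/q^{(2m)^2}$; taking the quotient shows the ratio tends to $1$ along even $n$. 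For the odd subsequence $n = 2m+1$, Theorem \ref{AsSOqodd} gives the analogous formula with a minus sign between the two products, and, as noted in the remark preceding the corollary, dividing \cite[Theorem 5.2(2)]{FuGuSt16} by $q$ produces exactly the same limiting constant for $i(\Sp(4m+2,q))/q^{(2m+1)^2}$. Hence the ratio tends to $1$ along odd $n$ as well, so the full limit exists and equals $1$.

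The conceptually interesting observation — and the only content beyond matching citations — is that the limiting constants on the $\SO^{\pm}$ side depend only on the parity of $n$ and not on the Witt sign $\pm$, and they agree exactly with those on the $\Sp$ side (after the factor of $q$ is accounted for in the odd case). There is no serious obstacle: the entire analytic effort — extracting the dominant contribution via Darboux's method from the simple poles at $u = \pm 1$ of the generating functions — was already carried out in the proofs of Theorem \ref{AsSOqodd} and \cite[Theorem 5.2]{FuGuSt16}, and the corollary is obtained by simply forming the quotient and noting the cancellation. It is worth emphasizing in a closing sentence that, although $|\SO^{\pm}(2n,q)| \neq |\Sp(2n,q)|$ and $i(\SO^{\pm}(2n,q)) \neq i(\Sp(2n,q))$ in general, the two families share the same asymptotic involution density $i(G)/q^{n^2}$ as $n \to \infty$.
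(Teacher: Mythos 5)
Your proposal is correct and follows essentially the same route as the paper: the authors deduce the corollary directly by comparing the limits in Theorem \ref{AsSOqodd} with the asymptotics for $i(\Sp(2n,q))$ in \cite[Theorem 5.2]{FuGuSt16}, noting (as you do) that the odd-$n$ case requires dividing \cite[Theorem 5.2(2)]{FuGuSt16} by $q$ to see the match. Your write-up merely makes explicit the parity split and the cancellation that the paper leaves as a one-line remark.
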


The following is a quick result, and we include it for the sake of completeness.

\begin{theorem} \label{AsSOnodd} If $q$ is odd and fixed, then
$$ \lim_{n \rightarrow \infty} \frac{i(\SO(2n+1,q))}{q^{n^2 + n}} = \prod_{i \geq 1} (1 + 1/q^{2i})^2.$$
\end{theorem}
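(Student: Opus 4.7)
The plan is to apply Darboux's method (Lemma \ref{darboux}) to the generating function in Theorem \ref{GFSOqodd}(2), in the same spirit as the proof of Theorem \ref{AsSOqodd}, only the argument is noticeably cleaner here. No substitution $u \mapsto u/q$ is required, because the coefficient of $u^n$ on the left hand side already carries the desired factor $q^{n^2+n}$.

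Specifically, using $|\gO(2n+1,q)| = 2q^{2n^2+n}(1/q^2;1/q^2)_n$, I would simplify the coefficient of $u^n$ on the left hand side of Theorem \ref{GFSOqodd}(2) to $\frac{i(\SO(2n+1,q))}{2q^{n^2+n}(1/q^2;1/q^2)_n}$. Since $(1/q^2;1/q^2)_n \to \prod_{i \geq 1}(1-1/q^{2i})$ as $n \to \infty$, determining the claimed limit reduces to finding $\lim_{n \to \infty} [u^n] f(u)$, where $f(u)$ denotes the right hand side of Theorem \ref{GFSOqodd}(2).

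Next I would verify the hypotheses of Lemma \ref{darboux} with $r = 1$. The roots of $\prod_{i \geq 1}(1 - u^2/q^{2i})$ lie at $u = \pm q^i$ for $i \geq 1$, so the quotient $\frac{\prod_{i \geq 1}(1+u/q^{2i})^2}{\prod_{i \geq 1}(1-u^2/q^{2i})}$ is analytic on $|u| < q$; the only singularity of $f(u)$ on $|u| \leq 1$ is therefore the simple pole at $u = 1$ contributed by $\frac{1}{1-u}$. Writing $f(u) = \frac{g(u)}{1-u}$ with $g(u) = \frac{1}{2}\frac{\prod_{i \geq 1}(1+u/q^{2i})^2}{\prod_{i \geq 1}(1 - u^2/q^{2i})}$ analytic in a neighborhood of $u = 1$, Lemma \ref{darboux} yields $[u^n] f(u) = g(1) + o(1) = \frac{1}{2}\frac{\prod_{i \geq 1}(1+1/q^{2i})^2}{\prod_{i \geq 1}(1-1/q^{2i})} + o(1)$.

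Equating the two expressions for the coefficient of $u^n$ gives $\lim_{n \to \infty} \frac{i(\SO(2n+1,q))}{q^{n^2+n}} = 2\prod_{i \geq 1}(1-1/q^{2i}) \cdot g(1) = \prod_{i \geq 1}(1+1/q^{2i})^2$, as claimed. There is no substantive obstacle: with only one pole on $|u|=1$, no odd/even case split like the one in Theorem \ref{AsSOqodd} is needed, making this argument strictly shorter than its even-dimensional analogue.
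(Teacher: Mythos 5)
Your argument is correct, but it takes a different route from the paper. The paper's own proof is a two-line deduction: it invokes $i(\SO(2n+1,q)) = \tfrac{1}{2}\, i(\gO(2n+1,q))$ from Proposition \ref{SOodd}(3) and then simply halves the asymptotic $\lim_{n\to\infty} i(\gO(2n+1,q))/q^{n^2+n} = 2\prod_{i\geq 1}(1+1/q^{2i})^2$ quoted from Theorem 6.3 of \cite{FuGuSt16}. You instead re-derive the limit by applying Lemma \ref{darboux} directly to the generating function of Theorem \ref{GFSOqodd}(2), which is sound: the coefficient of $u^n$ on the left is indeed $i(\SO(2n+1,q))/\bigl(2q^{n^2+n}(1/q^2;1/q^2)_n\bigr)$, the only singularity of the right-hand side on $|u|\leq 1$ is the simple pole at $u=1$ (the zeros of $\prod_{i\geq 1}(1-u^2/q^{2i})$ all lie at $|u|=q^i>1$), and your bookkeeping of $g(1)$ and the factor $\prod_{i\geq 1}(1-1/q^{2i})$ is right. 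You are also correct that no substitution $u\mapsto u/q$ is needed here, unlike in Theorem \ref{AsSOqodd}, since the dominant pole already sits at $u=1$ rather than at $u=\pm 1/q$. What each approach buys: the paper's proof is shorter but leans on the external asymptotic from \cite{FuGuSt16}; yours is self-contained within the paper's own generating-function machinery, and is essentially the same computation the paper performs for $\Omega(2n+1,q)$ in Theorem \ref{AsOmnodd}, so it fits the section's method even though the authors chose the shortcut for this particular statement.
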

\begin{proof} We have $i(\SO(2n+1,q)) = (1/2) i(\gO(2n+1,q))$ from Proposition \ref{Oodd}(3).  We also have from \cite[Theorem 6.3]{FuGuSt16} that
$$ \lim_{n \rightarrow \infty} \frac{i(\gO(2n+1,q))}{q^{n^2 + n}} = 2 \prod_{i \geq 1} (1 + 1/q^{2i})^2,$$
and the result follows.
\end{proof}

We note the sign switch in the following result, in comparison with Theorem \ref{AsSOqodd}, and its relation to indicators of these groups in Theorem \ref{SOtwist}.

\begin{theorem} \label{AsO-SOqodd} If $q$ is odd and fixed, then
$$ \lim_{m \rightarrow \infty} \frac{i(\gO^{\pm}(4m,q) \setminus \SO^{\pm}(4m,q))}{q^{(2m)^2}} = \frac{1}{2} \left( \prod_{i \geq 1} (1 + 1/q^{2i-1})^2 - \prod_{i \geq 1} (1 - 1/q^{2i-1})^2 \right),$$
and
$$ \lim_{m \rightarrow \infty} \frac{i(\gO^{\pm}(4m+2, q) \setminus \SO^{\pm}(4m+2,q))}{q^{(2m+1)^2}} = \frac{1}{2} \left( \prod_{i \geq 1} (1 + 1/q^{2i-1})^2 + \prod_{i \geq 1} (1 - 1/q^{2i-1})^2 \right).$$
\end{theorem}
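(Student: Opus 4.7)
The plan is to extract asymptotics from the generating function in Theorem \ref{GFO-SOqodd} via Darboux's method (Lemma \ref{darboux}), following closely the template of the proof of Theorem \ref{AsSOqodd}. First I will replace $u$ by $u/q$ in Theorem \ref{GFO-SOqodd} to obtain
\begin{equation*}
\sum_{n \geq 0} \frac{i(\gO^{\pm}(2n,q) \setminus \SO^{\pm}(2n,q))}{|\gO^{\pm}(2n,q)|} q^{n^2-n} u^n = \frac{u}{2} \cdot \frac{\prod_{i \geq 1}(1+u/q^{2i-1})^2}{\prod_{i \geq 1}(1-u^2/q^{2i-2})}.
\end{equation*}
The denominator on the right factors as $(1-u^2)\prod_{i \geq 1}(1-u^2/q^{2i})$, so the right-hand side has simple poles on the unit circle at $u = \pm 1$ and is otherwise analytic on $|u|<q$, putting it in the form required by Lemma \ref{darboux} with $r=1$.

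Next I will apply the partial fraction decomposition
\begin{equation*}
\frac{u}{1-u^2} = \frac{1/2}{1-u} - \frac{1/2}{1+u},
\end{equation*}
which is the single crucial difference from the analogous step in Theorem \ref{AsSOqodd}: there the corresponding decomposition was $1/(1-u^2) = \tfrac{1/2}{1-u} + \tfrac{1/2}{1+u}$, producing a plus sign rather than a minus sign. Applying Lemma \ref{darboux} at the two poles $w = \pm 1$ (with the analytic factor $\prod_{i \geq 1}(1+u/q^{2i-1})^2 / \prod_{i \geq 1}(1-u^2/q^{2i})$ evaluated at $u = \pm 1$), the coefficient of $u^n$ on the right is
\begin{equation*}
\frac{1}{4} \cdot \frac{\prod_{i \geq 1}(1+1/q^{2i-1})^2 - (-1)^n \prod_{i \geq 1}(1-1/q^{2i-1})^2}{\prod_{i \geq 1}(1-1/q^{2i})} + o(1).
\end{equation*}
Specializing $n = 2m$ produces the combination with a minus sign, and $n = 2m+1$ produces the combination with a plus sign; this parity flip relative to Theorem \ref{AsSOqodd} is precisely the source of the sign swap in the two statements.

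Finally, using $|\gO^{\pm}(2n,q)| = \frac{2q^{2n^2}}{q^n \pm 1}\prod_{i=1}^{n}(1-1/q^{2i})$ from \eqref{OrderOneven}, the coefficient of $u^n$ on the left simplifies to
\begin{equation*}
\frac{(1 \pm 1/q^n)\, i(\gO^{\pm}(2n,q) \setminus \SO^{\pm}(2n,q))}{2\, q^{n^2} \prod_{i=1}^{n}(1-1/q^{2i})},
\end{equation*}
and as $n \to \infty$ the factor $(1 \pm 1/q^n)/(2 \prod_{i=1}^n(1-1/q^{2i}))$ tends to $1/(2 \prod_{i \geq 1}(1-1/q^{2i}))$. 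Equating with the right-hand coefficient limit, separately for even and odd $n$, and clearing the common factor of $\prod_{i \geq 1}(1-1/q^{2i})$, yields the two claimed asymptotics. No major obstacle is anticipated, as the argument parallels the proof of Theorem \ref{AsSOqodd} almost step for step; the only subtle point is correctly tracking the parity flip introduced by the extra factor of $u$ in the numerator of the generating function.
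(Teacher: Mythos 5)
Your proof is correct, but it takes a genuinely different route from the paper's. The paper disposes of this theorem in one line: it writes $i(\gO^{\pm}(2n,q) \setminus \SO^{\pm}(2n,q)) = i(\gO^{\pm}(2n,q)) - i(\SO^{\pm}(2n,q))$, quotes the asymptotic $\lim_{n \to \infty} i(\gO^{\pm}(2n,q))/q^{n^2} = \prod_{i \geq 1}(1+1/q^{2i-1})^2$ from \cite[Theorem 6.2]{FuGuSt16}, and subtracts the limits already established in Theorem \ref{AsSOqodd}. You instead run the Darboux machinery of Lemma \ref{darboux} directly on the coset generating function of Theorem \ref{GFO-SOqodd}, mirroring the proof of Theorem \ref{AsSOqodd}; your substitution $u \mapsto u/q$, the factorization of the denominator, the partial fractions $\tfrac{u}{1-u^2} = \tfrac{1/2}{1-u} - \tfrac{1/2}{1+u}$, the resulting coefficient asymptotic $\tfrac{1}{4}\bigl(\prod(1+1/q^{2i-1})^2 - (-1)^n\prod(1-1/q^{2i-1})^2\bigr)/\prod(1-1/q^{2i}) + o(1)$, and the normalization of the left-hand coefficient via \eqref{OrderOneven} are all checked and correct. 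The paper's route is shorter but leans on the external asymptotic from \cite{FuGuSt16}; yours is self-contained given Theorem \ref{GFO-SOqodd} and, as a bonus, isolates the mechanism behind the parity swap between the $4m$ and $4m+2$ cases (the extra factor of $u$ flips the sign of the residue at $u=-1$), which in the paper's proof appears only as an artifact of the subtraction.
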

\begin{proof} This follows from Theorem \ref{AsSOqodd} and the result \cite[Theorem 6.2]{FuGuSt16} that when $q$ is odd and fixed,
$$ \lim_{n \rightarrow \infty} \frac{i(\gO^{\pm}(2n,q))}{q^{n^2}} = \prod_{i \geq 1} (1 + 1/q^{2i-1})^2,$$
by taking the difference.
\end{proof}

In the next two results, we note that the asymptotics we obtain are independent of $q$ mod $4$ and the type of defining quadratic form, even though the generating functions for these cases are quite different.

\begin{theorem} \label{AsOmqodd} If $q$ is odd and fixed, then
$$ \lim_{m \rightarrow \infty} \frac{i(\Om^{\pm}(4m,q))}{q^{(2m)^2}} = \frac{1}{4} \left( \prod_{i \geq 1} (1 + 1/q^{2i-1})^2 + \prod_{i \geq 1} (1 - 1/q^{2i-1})^2 \right),$$
and
$$ \lim_{m \rightarrow \infty} \frac{i(\Om^{\pm}(4m+2,q))}{q^{(2m+1)^2}} = \frac{1}{4} \left( \prod_{i \geq 1} (1 + 1/q^{2i-1})^2 - \prod_{i \geq 1} (1 - 1/q^{2i-1})^2 \right).$$
\end{theorem}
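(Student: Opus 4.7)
The approach is to apply the Darboux-style coefficient extraction used throughout Section \ref{Asymptotics}, following the template of Theorem \ref{AsSOqodd} but now applied to the generating functions of Theorems \ref{GFOmq1mod4} and \ref{GFOmq3mod4}. First I would substitute $u \mapsto u/q$ in each generating function; using $|\gO^{\pm}(2n,q)| = \tfrac{2q^{2n^2}}{q^n\pm 1}(1/q^2;1/q^2)_n$, the coefficient of $u^n$ on the left-hand side becomes
\[
\frac{i(\Om^{\pm}(2n,q))\,(1 \pm q^{-n})}{2\,q^{n^2}\,(1/q^2;1/q^2)_n}.
\]
As $n\to\infty$ the factor $(1\pm q^{-n})\to 1$ and $(1/q^2;1/q^2)_n \to \prod_{i\ge 1}(1-1/q^{2i})$, so the limit $i(\Om^{\pm}(2n,q))/q^{n^2}$ is controlled entirely by the asymptotics of $[u^n]$ applied to the right-hand side.

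Next I would carry out a term-by-term analysis of the right-hand side to isolate the dominant singularity. In both the $q\equiv 1\pmod 4$ and $q\equiv 3\pmod 4$ expressions, after the substitution $u\mapsto u/q$ the only factor whose denominator vanishes on $|u|=1$ is the leading summand
\[
T_1(u/q) = \frac{\prod_{i\ge 1}(1+u/q^{2i-1})^2}{\prod_{i\ge 1}(1-u^2/q^{2i-2})},
\]
with simple poles at $u=\pm 1$. Every other summand has only denominator zeros of the form $u=\pm q^{k/2}$ or $u=\pm i\,q^{k/2}$ with $k\ge 1$, hence radius $\ge\sqrt{q}>1$, so by Corollary \ref{darbouxcor} those contributions are $o(1)$. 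This is where the observation that the asymptotics are independent of $q\bmod 4$ comes out: the additional $\pm T_3 \pm T_4$ terms in the $q\equiv 3$ case, as well as the $T_2$ terms in both cases, all vanish in the limit. The coefficient of $T_1$ is $1/4$ in every one of the four cases considered ($\Om^\pm$ for each of $q\equiv 1$ and $q\equiv 3$).

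Then I would apply Lemma \ref{darboux} to $T_1(u/q)$. Writing
\[
T_1(u/q) \;=\; \frac{1}{(1-u)(1+u)}\cdot h(u),\qquad h(u)=\frac{\prod_{i\ge 1}(1+u/q^{2i-1})^2}{\prod_{i\ge 1}(1-u^2/q^{2i})},
\]
and using $\tfrac{1}{(1-u)(1+u)}=\tfrac{1}{2}\bigl(\tfrac{1}{1-u}+\tfrac{1}{1+u}\bigr)$, Lemma \ref{darboux} yields
\[
[u^n]\,T_1(u/q) \;=\; \tfrac{1}{2}h(1) + \tfrac{(-1)^n}{2}h(-1) + o(1),
\]
where $h(\pm 1)=\prod_{i\ge 1}(1\pm 1/q^{2i-1})^2 / \prod_{i\ge 1}(1-1/q^{2i})$. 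Setting $n=2m$ and $n=2m+1$ and equating with the left-hand side limit, the common factor $\prod_{i\ge 1}(1-1/q^{2i})$ cancels and gives exactly the two claimed expressions, with the $+$ sign in the $n=2m$ case and the $-$ sign in the $n=2m+1$ case.

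The main obstacle is purely bookkeeping: one must verify for all eight summands appearing across Theorems \ref{GFOmq1mod4} and \ref{GFOmq3mod4} that, after the substitution $u\mapsto u/q$, no singularities land on $|u|=1$ except in $T_1$. This is delicate in the $q\equiv 3\pmod 4$ case because the original generating function has boundary poles at $|u|=1/q$ that cancel only when the four summands are combined; the substitution pushes these to $|u|=1$ only through $T_1$, while the mixed-sign products $\prod(1 + (-1)^{i-1}u/q^i)$ in $T_2$ and $T_3$ contribute denominators $\prod(1+u^2/q^{2i-1})$ with zeros strictly inside $|u|=\sqrt{q}$. Once this verification is done, the independence of the limit from both the sign in $\Om^{\pm}$ and the congruence class of $q\bmod 4$ is transparent from the computation above.
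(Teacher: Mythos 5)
Your proposal is correct and follows essentially the same route as the paper's proof: substitute $u\mapsto u/q$ in the generating functions of Theorems \ref{GFOmq1mod4}(1),(2) and \ref{GFOmq3mod4}(1), observe that only the common leading summand (with coefficient $1/4$ in every case) has poles on $|u|=1$ (simple poles at $u=\pm 1$) while every other summand has poles of modulus at least $\sqrt{q}>1$ and hence contributes $o(1)$ by Corollary \ref{darbouxcor}, then apply Lemma \ref{darboux} and equate with the limiting coefficient on the left-hand side. The one slip is the phrase ``zeros strictly inside $|u|=\sqrt{q}$'' for the denominators $\prod_{i\ge 1}(1+u^2/q^{2i-1})$: those zeros lie at $|u|=q^{i-1/2}\ge\sqrt{q}$, i.e.\ on or outside that circle, which is in fact what the argument needs.
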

\begin{proof} We first replace $u$ by $u/q$ in each of the generating functions in Theorems \ref{GFOmq1mod4}(1), \ref{GFOmq1mod4}(2), and \ref{GFOmq3mod4}(1).  After this substitution, all products of quotients have poles with magnitude greater than $1$ (with the largest in magnitude of the others being $\pm q$, $\pm \sqrt{q}$, and $\pm \sqrt{-q}$), except for the term
$$ \frac{1}{4} \frac{\prod_{i \geq} (1 + u/q^{2i-1})^2}{\prod_{i \geq 1} (1 - u^2/q^{2(i-1)})} = \frac{1}{8} \frac{ \prod_{i \geq 1} (1 + u/q^{2i-1})^2 }{\prod_{i \geq 1} (1 - u^2/q^{2i} ) } \left[ \frac{1}{1+u} + \frac{1}{1-u} \right],$$
which is analytic on $|u|<1$.  If $f(u)$ is any of the generating functions of interest, then we obtain from Lemma \ref{darboux} and Corollary \ref{darbouxcor} that if $n=2m$ or $n=2m+1$, then
\begin{equation} \label{rightside2lim}
\lim_{m \rightarrow \infty} [u^n]f(u) = \frac{1}{8} \left( \frac{ \prod_{i \geq 1} (1 + 1/q^{2i-1})^2 \pm \prod_{i \geq 1} (1 - 1/q^{2i-1})^2 }{ \prod_{i \geq 1} (1 - 1/q^{2i}) } \right),
\end{equation}
where we take $+$ if $n=2m$ and $-$ if $n = 2m+1$.

This expression is equal to the coefficient of $u^n$ as $n \rightarrow \infty$ of 
\begin{equation} 
\sum_{n \geq 0}  \frac{i(\Om^{\pm}(2n,q))}{|\gO^{\pm}(2n,q)|}  q^{n^2 - n} u^n  = \sum_{n \geq 0} u^n \frac{i(\Om^{\pm}(2n,q))}{2 q^{n^2} (1 \mp 1/q^n)(1 - 1/q^2) \cdots (1 - 1/q^{2(n-1)})}.
\end{equation}
Taking $n=2m$ or $n=2m+1$, the limit of the coefficient of $u^n$ as $m \rightarrow \infty$ of this is
\begin{equation} \label{leftside2lim}
\frac{1}{2 \prod_{i \geq 1} (1 - 1/q^{2i})} \lim_{m \rightarrow \infty} \frac{i(\SO^{\pm}(2n,q))}{q^{n^2}}.
\end{equation}
Equating \eqref{rightside2lim} and \eqref{leftside2lim} gives the result.
\end{proof}

\begin{theorem} \label{AsOmnodd} If $q$ is odd and fixed, then
$$ \lim_{n \rightarrow \infty} \frac{i(\Om(2n+1,q))}{q^{n^2 + n}} = \frac{1}{2} \prod_{i \geq 1} (1 + 1/q^{2i})^2.$$
\end{theorem}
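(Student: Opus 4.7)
The plan is to apply Darboux's method (Lemma \ref{darboux}) to the generating functions of Theorems \ref{GFOmq1mod4}(3) and \ref{GFOmq3mod4}(2), following exactly the strategy used for Theorem \ref{AsOmqodd}. These handle the cases $q \equiv 1 \pmod{4}$ and $q \equiv 3 \pmod{4}$ for the generating function $\sum_{n \geq 0} \frac{i(\Om(2n+1,q))}{|\gO(2n+1,q)|} q^{n^2} u^n$ respectively.

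First I will observe that both generating functions share the same first term, namely $\frac{1}{4} \cdot \frac{(1+u) \prod_{i \geq 1}(1 + u/q^{2i})^2}{\prod_{i \geq 1}(1 - u^2/q^{2(i-1)})}$, while their second terms are each analytic on $|u| < \sqrt{q}$: in Theorem \ref{GFOmq1mod4}(3) the denominator $\prod_{i \geq 1}(1 - u^2/q^{2i-1})$ has smallest-magnitude zeros at $u = \pm \sqrt{q}$, and in Theorem \ref{GFOmq3mod4}(2) the denominator $\prod_{i \geq 1}(1 + u^2/q^{2i-1})$ has smallest-magnitude zeros at $u = \pm i\sqrt{q}$. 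In each case the numerator of the second term has no zeros on $|u| = \sqrt{q}$. By Corollary \ref{darbouxcor} (applied with $r = \sqrt{q} > 1$) the coefficient of $u^n$ in each second term tends to $0$ as $n \to \infty$, so only the common first term contributes to the limit.

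For that first term, I will factor $\prod_{i \geq 1}(1 - u^2/q^{2(i-1)}) = (1-u)(1+u) \prod_{i \geq 1}(1 - u^2/q^{2i})$ and cancel the $(1+u)$ with the numerator, leaving $\frac{1}{4} \cdot g(u)/(1-u)$, where $g(u) = \prod_{i \geq 1}(1 + u/q^{2i})^2 / \prod_{i \geq 1}(1 - u^2/q^{2i})$ is analytic in a neighborhood of $u = 1$. Applying Lemma \ref{darboux} with $r = 1$ and $w_1 = 1$, the coefficient of $u^n$ in the first term tends to $\frac{1}{4} g(1) = \frac{1}{4} \prod_{i \geq 1}(1 + 1/q^{2i})^2 / \prod_{i \geq 1}(1 - 1/q^{2i})$ as $n \to \infty$.

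Finally, I equate this limit with the coefficient of $u^n$ on the left-hand side. Using $|\gO(2n+1,q)| = 2q^{2n^2+n} (1/q^2; 1/q^2)_n$, that coefficient equals $i(\Om(2n+1,q))/[2 q^{n^2+n} (1/q^2; 1/q^2)_n]$, and since $(1/q^2; 1/q^2)_n \to \prod_{i \geq 1}(1 - 1/q^{2i})$, solving gives the desired limit $\frac{1}{2} \prod_{i \geq 1}(1 + 1/q^{2i})^2$. There is no substantial obstacle; the only mildly delicate point is recognizing that the $(1+u)$ factor in the numerator of the first term cancels what would otherwise be a pole at $u = -1$, leaving $u = 1$ as the unique pole on $|u| = 1$, and this is precisely why the limit is $\frac{1}{2} \prod (1 + 1/q^{2i})^2$ rather than the $\frac{1}{4} \prod (1 + 1/q^{2i})^2$ that appeared in the oscillatory even-dimensional cases of Theorem \ref{AsOmqodd}.
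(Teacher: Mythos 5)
Your proposal is correct and follows essentially the same route as the paper's proof: both isolate the common first term of the generating functions in Theorems \ref{GFOmq1mod4}(3) and \ref{GFOmq3mod4}(2), cancel the $(1+u)$ factor to leave a single simple pole at $u=1$, dismiss the second terms via Corollary \ref{darbouxcor} using their poles at $\pm\sqrt{q}$ and $\pm\sqrt{-q}$, and equate coefficients. No gaps.
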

\begin{proof}
We consider the power series
\begin{equation} \label{leftside4}
\sum_{n \geq 0}  \frac{i(\Om(2n+1,q))}{|\gO(2n+1,q)|}  q^{n^2} u^n  = \sum_{n \geq 0} u^n \frac{i(\Om(2n+1,q))}{2 q^{n^2+n} (1 - 1/q^2) \cdots (1 - 1/q^{2n})}, 
\end{equation}
which is also given by the generating functions in Theorems \ref{GFOmq1mod4}(3) and \ref{GFOmq3mod4}(2).  In both of those generating functions, the first product of quotients term, given by
$$ \frac{1}{4} \frac{ (1 + u) \prod_{i \geq 1} (1 + u/q^{2i})^2}{ \prod_{i \geq 1} (1 - u^2/q^{2(i-1)})} = \frac{1}{4} \frac{1}{1-u} \frac{ \prod_{i \geq 1} (1 + u/q^{2i})^2}{ \prod_{i \geq 1} (1 - u^2/q^{2i})},$$
has a simple pole at $u=1$ and is analytic in $|u|<1$.  The second product of quotients in each of those generating functions has pole of smallest magnitude given by $\pm \sqrt{q}$ and $\pm \sqrt{-q}$, respectively.  It follows from Lemma \ref{darboux} and Corollary \ref{darbouxcor} that the limit as $n \rightarrow \infty$ of the coefficient of $u^n$ in each of these generating functions is given by 
\begin{equation} \label{rightside4lim}
\frac{1}{4} \frac{ \prod_{i \geq 1} (1 + 1/q^{2i})^2}{ \prod_{i \geq 1} (1 - 1/q^{2i})}.
\end{equation}
Meanwhile, the limit as $n \rightarrow \infty$ of the coefficient of $u^n$ in \eqref{leftside4} is
\begin{equation} \label{leftside4lim}
\frac{1}{2 \prod_{i \geq 1} (1 - q^{2i})} \lim_{n \rightarrow \infty} \frac{i(\Om(2n+1,q))}{q^{n^2 + n}}.
\end{equation}
Equating \eqref{rightside4lim} and \eqref{leftside4lim} gives the result.
\end{proof}

While we do have $i(\Om^-(2n,q)) = (1/2) i(\SO^-(2n,q))$ when $q \equiv 1($mod $4)$, in general $\Om^{\pm}(n,q)$ does not have half of the number of involutions of $\SO^{\pm}(n,q)$.  However, we do have this asymptotically in the following, which follows from Theorems \ref{AsSOqodd}, \ref{AsSOnodd}, \ref{AsOmqodd}, and \ref{AsOmnodd}.

\begin{corollary} \label{onehalf}
For any fixed odd $q$, we have
$$ \lim_{n \rightarrow \infty} \frac{i(\Om^{\pm}(n,q))}{i(\SO^{\pm}(n,q))} = \frac{1}{2}.$$
\end{corollary}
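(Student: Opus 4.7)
The plan is to establish the corollary by dividing the asymptotic formulas already provided, splitting into three cases according to the residue class of the dimension $n$ (two classes for even $n$ and one for odd $n$).

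For $n = 4m$, Theorems \ref{AsSOqodd} and \ref{AsOmqodd} both normalize by $q^{(2m)^2}$ and yield limits differing by exactly a factor of $2$: the $\SO^{\pm}$ limit equals $\tfrac{1}{2}(P_+ + P_-)$ while the $\Om^{\pm}$ limit equals $\tfrac{1}{4}(P_+ + P_-)$, where I abbreviate $P_\pm = \prod_{i \geq 1}(1 \pm 1/q^{2i-1})^2$. Since $q$ is odd, every factor $1 \pm 1/q^{2i-1}$ is strictly positive, so $P_+ + P_- > 0$ and the ratio of the two limits is $1/2$. The case $n = 4m+2$ is handled identically except the common factor is $P_+ - P_-$, which is strictly positive because $1 + 1/q^{2i-1} > 1 - 1/q^{2i-1} > 0$ for every $i$ and the product of larger factors exceeds the product of smaller ones.

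For odd dimension $n = 2m+1$, Theorems \ref{AsSOnodd} and \ref{AsOmnodd} give limits $\prod_{i \geq 1}(1 + 1/q^{2i})^2$ and $\tfrac{1}{2}\prod_{i \geq 1}(1 + 1/q^{2i})^2$ respectively, whose ratio is again $1/2$, with the infinite product clearly nonzero. Since the three residue classes $\{n \equiv 0 \pmod 4\}$, $\{n \equiv 2 \pmod 4\}$, and $\{n \text{ odd}\}$ partition the positive integers beyond a bounded set, and each subsequential limit equals $1/2$, the full limit as $n \to \infty$ equals $1/2$. There is no real obstacle: the asymptotic analysis has been packaged into the prior theorems, and this corollary simply records the numerical comparison, provided one checks the positivity of the relevant infinite products so that division of limits is legitimate.
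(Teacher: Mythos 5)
Your proposal is correct and is essentially the paper's argument: the corollary is deduced by taking ratios of the limits in Theorems \ref{AsSOqodd}, \ref{AsSOnodd}, \ref{AsOmqodd}, and \ref{AsOmnodd} over the three residue classes of $n$. Your added check that the common infinite products (in particular $\prod_{i\geq 1}(1+1/q^{2i-1})^2 - \prod_{i\geq 1}(1-1/q^{2i-1})^2$) are strictly positive, so that division of limits is legitimate, is a detail the paper leaves implicit but is exactly what is needed.
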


One might expect for a similar result to hold for $i(\SO^{\pm}(n,q))/i(\gO^{\pm}(n,q))$.  However, it follows from Theorem \ref{AsSOqodd} and \cite[Theorem 6.2]{FuGuSt16} that when $n$ is even, the limit of this expression is dependent on $q$.

\subsection{Groups over fields of characteristic two}

In our last two asymptotic results, we note the similarity to the results in Theorems \ref{AsSOqodd} and \ref{AsO-SOqodd}.

\begin{theorem} \label{AsOmqeven} If $q$ is even and fixed, then
$$ \lim_{m \rightarrow \infty} \frac{i(\Om^{\pm}(4m,q))}{q^{(2m)^2}} = \frac{1}{2} \left( \prod_{i \geq 1} (1 + 1/q^{2i-1}) + \prod_{i \geq 1} (1 - 1/q^{2i-1}) \right),$$
and
$$ \lim_{m \rightarrow \infty} \frac{i(\Om^{\pm}(4m+2,q))}{q^{(2m+1)^2}} = \frac{1}{2} \left( \prod_{i \geq 1} (1 + 1/q^{2i-1}) - \prod_{i \geq 1} (1 - 1/q^{2i-1}) \right).$$
\end{theorem}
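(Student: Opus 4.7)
My plan is to mimic the proof of Theorem \ref{AsOmqodd}, starting from the generating function of Theorem \ref{GFOmqeven}. First I substitute $u \mapsto u/q$ throughout, which rescales the left side to
\[
\sum_{n \geq 0} u^n \frac{i(\Om^{\pm}(2n,q))}{q^{n^2-n} |\gO^{\pm}(2n,q)|}
\]
and the right side to
\[
\frac{1}{2}\left[ \frac{\prod_{i \geq 1}(1 + u/q^{2i-1})}{\prod_{i \geq 1}(1 - u^2/q^{2i-2})} \pm \frac{\prod_{i \geq 1}(1 + u/q^{2i})}{\prod_{i \geq 1}(1 - u^2/q^{2i})} \right].
\]
The second quotient is analytic on $|u| < q$, so by Corollary \ref{darbouxcor} the coefficient of $u^n$ in it tends to $0$. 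In the denominator of the first quotient, the $i=1$ factor is $1 - u^2$, giving simple poles at $u = \pm 1$, while all remaining factors contribute only poles of modulus $\geq q > 1$.

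Next I handle the first quotient via the partial fraction decomposition $\frac{1}{1-u^2} = \frac{1/2}{1-u} + \frac{1/2}{1+u}$, rewriting the dominant piece of the right side as
\[
\frac{1}{4}\left[\frac{1}{1-u} + \frac{1}{1+u}\right] \frac{\prod_{i \geq 1}(1 + u/q^{2i-1})}{\prod_{i \geq 1}(1 - u^2/q^{2i})},
\]
where the quotient outside the brackets is analytic in neighborhoods of both $\pm 1$. Lemma \ref{darboux} applied at $w_1 = 1$ and $w_2 = -1$ then yields
\[
\lim_{m \to \infty}[u^n]\,\mathrm{RHS} = \frac{1}{4}\cdot \frac{\prod_{i \geq 1}(1 + 1/q^{2i-1}) + (-1)^n \prod_{i \geq 1}(1 - 1/q^{2i-1})}{\prod_{i \geq 1}(1 - 1/q^{2i})},
\]
with $n=2m$ giving a plus sign and $n = 2m+1$ a minus sign in the numerator.

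Finally, using the order formula $|\gO^{\pm}(2n,q)| = \frac{2q^{2n^2}}{q^n \pm 1}(1/q^2; 1/q^2)_n$, the $n$-th coefficient on the left side simplifies to
\[
\frac{i(\Om^{\pm}(2n,q))\,(1 \pm q^{-n})}{2q^{n^2}\,(1/q^2; 1/q^2)_n},
\]
whose limit as $n \to \infty$ is $\frac{1}{2\prod_{i \geq 1}(1 - 1/q^{2i})}\lim i(\Om^{\pm}(2n,q))/q^{n^2}$. Equating this with the Darboux asymptotic cancels $\prod_{i \geq 1}(1 - 1/q^{2i})$ and, after multiplying by $2$, produces the two claimed formulas. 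I expect no real obstacle beyond routine bookkeeping; the only subtle point is tracking the $(-1)^n$ factor to split the $4m$ and $4m+2$ cases correctly, exactly as in the proof of Theorem \ref{AsOmqodd}.
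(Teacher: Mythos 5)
Your proposal is correct and follows essentially the same route as the paper's proof: substitute $u \mapsto u/q$ in Theorem \ref{GFOmqeven}, discard the second quotient via Corollary \ref{darbouxcor}, apply Lemma \ref{darboux} at the simple poles $u = \pm 1$ of the first quotient, and equate with the limit of the left-hand coefficients. The only slip is typographical: in your first display the factor $q^{n^2-n}$ should sit in the numerator, i.e.\ the coefficient of $u^n$ is $i(\Om^{\pm}(2n,q))\,q^{n^2-n}/|\gO^{\pm}(2n,q)|$, which is what your later simplification to $\frac{i(\Om^{\pm}(2n,q))(1 \pm q^{-n})}{2q^{n^2}(1/q^2;1/q^2)_n}$ correctly uses.
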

\begin{proof} We replace $u$ by $u/q$ in Theorem \ref{GFOmqeven} to obtain
\begin{align}
\sum_{n \geq 0}  \frac{i(\Om^{\pm}(2n,q))}{|\gO^{\pm}(2n,q)|} q^{n^2 - n} u^n  & = \sum_{n \geq 0} u^n \frac{i(\Om^{\pm}(2n,q))}{2q^{n^2} (1 \mp 1/q^n)(1 - 1/q^2) \cdots (1 - 1/q^{2(n-1)})} \label{leftside5}\\
& =  \frac{1}{2} \left[ \frac{ \prod_{i \geq 1} (1 + u/q^{2i-1}) }{\prod_{i \geq 1} (1 - u^2/q^{2(i-1)} ) } \pm \frac{ \prod_{i \geq 1} (1 + u/q^{2i}) }{ \prod_{i \geq 1} (1 - u^2/q^{2i}) } \right] \label{rightside5}
\end{align}
The second quotient of products in \eqref{rightside5} is analytic when $|u|<q$, where $q > 1$, and so by Corollary \ref{darbouxcor} the limit of the coefficient of $u^n$ in this expression as $n \rightarrow \infty$ is $0$.  The first quotient of products is analytic in the unit disk with simple poles at $u=1$ and $u=-1$.  We rewrite this quotient of products as 
$$ \frac{1}{4} \frac{ \prod_{i \geq 1} (1 + u/q^{2i-1}) }{\prod_{i \geq 1} (1 - u^2/q^{2i} ) } \left[ \frac{1}{1+u} + \frac{1}{1-u} \right],$$
where the quotient of products outside of the brackets is analytic in neighborhoods of $1$ and $-1$.  If we take either $n=2m$ or $n=2m+1$, then by Lemma \ref{darboux}, the limit as $m \rightarrow \infty$ of the coefficient of $u^n$ in \eqref{rightside5} to be
\begin{equation} \label{rightside5lim}
\frac{1}{4} \left( \frac{ \prod_{i \geq 1} (1 + 1/q^{2i-1}) \pm \prod_{i \geq 1} (1 - 1/q^{2i-1}) }{ \prod_{i \geq 1} (1 - 1/q^{2i}) } \right),
\end{equation}
where we take $+$ if $n=2m$ and $-$ if $n=2m+1$.

If we take either $n=2m$ or $n = 2m+1$, we compute the limit of the coefficient of $u^n$ in \eqref{leftside5} to be
\begin{equation} \label{leftside5lim}
\frac{1}{2 \prod_{i \geq 1} (1 - 1/q^{2i})} \lim_{m \rightarrow \infty} \frac{i(\Om^{\pm}(2n,q))}{q^{n^2}}.
\end{equation}
Equating the expressions in \eqref{rightside5lim} and \eqref{leftside5lim} gives the result.
\end{proof}

\begin{theorem} If $q$ is even and fixed, then
$$ \lim_{m \rightarrow \infty} \frac{i(\gO^{\pm}(4m,q) \setminus \Om^{\pm}(4m,q))}{q^{(2m)^2}} = \frac{1}{2} \left( \prod_{i \geq 1} (1 + 1/q^{2i-1}) - \prod_{i \geq 1} (1 - 1/q^{2i-1}) \right),$$
and
$$ \lim_{m \rightarrow \infty} \frac{i(\gO^{\pm}(4m+2,q) \setminus \Om^{\pm}(4m+2,q))}{q^{(2m+1)^2}} = \frac{1}{2} \left( \prod_{i \geq 1} (1 + 1/q^{2i-1}) + \prod_{i \geq 1} (1 - 1/q^{2i-1}) \right).$$
\end{theorem}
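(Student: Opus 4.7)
The plan is to mimic the strategy used for Theorem \ref{AsO-SOqodd}. Since $\Om^{\pm}(2n,q)$ has index $2$ in $\gO^{\pm}(2n,q)$, we have the set-theoretic decomposition
$$ i(\gO^{\pm}(2n,q) \setminus \Om^{\pm}(2n,q)) = i(\gO^{\pm}(2n,q)) - i(\Om^{\pm}(2n,q)),$$
so the two asymptotics we want can be read off by subtracting the asymptotics of Theorem \ref{AsOmqeven} from those of $i(\gO^{\pm}(2n,q))$.

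First I would invoke (or quickly re-derive) the characteristic-two analogue of \cite[Theorem 6.2]{FuGuSt16}, namely
$$ \lim_{n \to \infty} \frac{i(\gO^{\pm}(2n,q))}{q^{n^2}} = \prod_{i \geq 1}(1 + 1/q^{2i-1}),$$
independently of the sign and of the parity of $n$. This follows by applying Lemma \ref{darboux} and Corollary \ref{darbouxcor} to the generating function from \cite[Theorems 2.18 and 2.19]{FuGuSt16}, in exactly the manner used to prove Theorem \ref{AsOmqeven}: after the substitution $u \to u/q$, the second summand is analytic on $|u|<q$ and contributes $o(1)$ to the coefficient of $u^n$, while the first summand has a single simple pole at $u=1$ whose Darboux contribution is $\tfrac{1}{2}\prod_{i\geq 1}(1+1/q^{2i-1})/\prod_{i\geq 1}(1-1/q^{2i})$. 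After clearing the common factor $(1/q^2;1/q^2)_n$ on the left-hand side (exactly as in the final lines of the proof of Theorem \ref{AsSOqodd}), one obtains the displayed limit.

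Now set $P_{\pm} = \prod_{i \geq 1}(1 \pm 1/q^{2i-1})$. Theorem \ref{AsOmqeven} then reads
$$ \lim_{m \to \infty}\frac{i(\Om^{\pm}(4m,q))}{q^{(2m)^2}} = \frac{P_+ + P_-}{2}, \qquad \lim_{m \to \infty}\frac{i(\Om^{\pm}(4m+2,q))}{q^{(2m+1)^2}} = \frac{P_+ - P_-}{2},$$
and subtracting these from $P_+$ yields $(P_+ - P_-)/2$ and $(P_+ + P_-)/2$ respectively, which match the two claimed limits. The sign-flip in parity between the $\Om^{\pm}$ asymptotics and the $\gO^{\pm}\setminus\Om^{\pm}$ asymptotics is parallel to the sign-flip already observed in Theorems \ref{AsSOqodd} and \ref{AsO-SOqodd} in odd characteristic, and is reminiscent of the twisted-indicator phenomenon in Lemma \ref{OifOmega}.

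There is essentially no real obstacle: the only piece of work beyond algebraic simplification is securing the $q$-even limit for $i(\gO^{\pm}(2n,q))/q^{n^2}$. If this limit is not recorded in \cite{FuGuSt16} in exactly the form needed, the short Darboux argument sketched above supplies it directly; a fully self-contained alternative is to apply Darboux to the generating function of Theorem \ref{GFO-Omqeven} itself, after the substitution $u \to u/q$, whereupon only the poles at $u = \pm 1$ survive inside $|u|<q$ and the two residue contributions produce exactly $(P_+\mp P_-)/2$ for even/odd coefficient index.
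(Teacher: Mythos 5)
Your proposal is correct and follows essentially the same route as the paper: the paper also obtains the result by subtracting the asymptotics of Theorem \ref{AsOmqeven} from the limit $\lim_{n\to\infty} i(\gO^{\pm}(2n,q))/q^{n^2} = \prod_{i\geq 1}(1+1/q^{2i-1})$, which it cites as \cite[Theorem 6.7]{FuGuSt16} (the characteristic-two statement you correctly identify as the analogue of \cite[Theorem 6.2]{FuGuSt16}). Your supplementary Darboux sketch for re-deriving that limit is sound but unnecessary, since the reference already supplies it.
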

\begin{proof} This follows directly from Theorem \ref{AsOmqeven}, and the result \cite[Theorem 6.7]{FuGuSt16} that for $q$ even and fixed,
$$ \lim_{n \rightarrow \infty} \frac{i(\gO^{\pm}(2n,q))}{q^{n^2}} = \prod_{i \geq 1} (1 + 1/q^{2i-1}),$$
by taking the difference.
\end{proof}

\newpage

\appendix
\section*{Appendix}
\renewcommand{\thesubsection}{\Alph{subsection}}

%\subsection{Character degree sums for $\Sp(2n,q)$ and $\Om^{\pm}(2n,q)$ with $q$ even} \label{AppA}

The following table gives the number of involutions (using \cite[Theorem 5.3]{FuGuSt16}) and the character degree sum (using \cite{LuWWW}) for $\Sp(2n,q)$ with $q$ even and $2 \leq n \leq 8$.  These computations complete the proof of Theorem \ref{Spqeven}.

\bigskip

\noindent
\renewcommand*{\arraystretch}{1.2}
\begin{tabular}[H!]{| m{4.8cm} | m{5cm} | m{4.8cm} |}
\hline
\begin{center}\textbf{Character degree sum of}\end{center} & \begin{center}\textbf{Number of involutions in}\end{center} & \begin{center} \textbf{Expression} \end{center} \\ 
\hline
$\Sp(4,q)$  &  $\Sp(4,q)$   &  $q^6+q^4-q^2$ \\
\hline
$\Sp(6,q)$  &  $\Sp(6,q)$   &  $q^{12}+q^{10}-q^4$ \\
\hline
$\Sp(8,q)$  &  $\Sp(8,q)$   &  $q^{20}+q^{18}+q^{16}-q^{12}-q^{10}$ \\
\hline
$\Sp(10,q)$  &  $\Sp(10,q)$   &  $q^{30}+q^{28}+q^{26}+q^{24}-q^{20}-q^{18}-q^{16}-q^{14}+q^{10}$ \\
\hline
$\Sp(12,q)$  &  $\Sp(12,q)$   &  $q^{42}+q^{40}+q^{38}+2q^{36}-q^{30}-q^{28}-2q^{26}-q^{24}+q^{14}$ \\
\hline
$\Sp(14,q)$  &  $\Sp(14,q)$   &  $q^{56}+q^{54}+q^{52}+2q^{50}+q^{48}+q^{46}-q^{42}-2q^{40}-2q^{38}-2q^{36}-q^{34}-q^{32}+q^{28}+q^{26}+q^{24}$ \\
\hline
$\Sp(16,q)$  &  $\Sp(16,q)$   &  $q^{72}+q^{70}+q^{68}+2q^{66}+2q^{64}+q^{62}+q^{60}-q^{56}-2q^{54}-2q^{52}-3q^{50}-2q^{48}-2q^{46}-q^{44}+q^{40}+q^{38}+q^{36}+q^{34}+q^{32}+q^{30}-q^{24}$ \\
\hline
\end{tabular}

\bigskip

\noindent
The following table gives the number of involutions in either $\Om^{\pm}(2n,q)$ or $\gO^{\pm}(2n,q) \setminus \Om^{\pm}(2n,q)$ (using Proposition \ref{Omeven}), and the character degree sum (using \cite{LuWWW}) of $\Om^{\pm}(2n,q)$, for $q$ even and the values of $n$ needed for the proof of Theorem \ref{OOmqeven}.

\bigskip

\noindent
\renewcommand*{\arraystretch}{1.2}
\begin{tabular}[H!]{| m{4.8cm} | m{5cm} | m{4.8cm} |}
\hline
\begin{center}\textbf{Character degree sum of} \end{center}& \begin{center}\textbf{Number of involutions in} \end{center}         & \begin{center} \textbf{Expression} \end{center}         \\ 
\hline
$\Omega^\pm(4,q)$ & $\Omega^\pm(4,q)$ & $q^4$ \\
\hline
$\Omega^\pm(6,q)$ & $\gO^\pm(6,q) \setminus \Omega^\pm(6,q)$ & $q^9 \mp q^6$  \\ 
\hline
$\Omega^\pm(8,q)$ & $ \Omega^\pm(8,q)$ & $q^{16}+q^{12}-q^4$  \\ \hline
$\Omega^\pm(10,q)$ & $\gO^\pm(10,q) \setminus \Omega^\pm(10,q)$ & $q^{25}+q^{21}\mp q^{20}\mp q^{16}-q^{13}\pm q^8$  \\ 
\hline
$\Omega^\pm(12,q)$ & $\Omega^\pm(12,q)$ & $q^{36}+q^{32}+q^{30}+q^{28}-q^{22}-q^{20}-q^{18}-q^{16}+q^{10}$  \\ 
\hline
$\Omega^\pm(14,q)$ & $\gO^\pm(14,q) \setminus \Omega^\pm(14,q)$ & $q^{49}+q^{45}+q^{43}\mp q^{42}+q^{41}\mp q^{38}\mp q^{36}-q^{35}\mp q^{34}-q^{33}-q^{31}-q^{29}\pm q^{28}\pm q^{26}\pm q^{24}+q^{23}\pm q^{22}\mp q^{16}$ \\
\hline
$\Omega^+(16,q)$ & $\Omega^+(16,q)$ & $q^{64}+q^{60}+q^{58}+2q^{56}+q^{54}+q^{52}-q^{46}-2q^{44}-2q^{42}-2q^{40}-q^{38}-q^{36}+q^{30}+q^{28}+q^{26}$ \\ \hline
\end{tabular}

\bigskip

\noindent
\textsc{Department of Mathematics, Statistics, and Computer Science (MC249)\\ }
\textsc{University of Illinois at Chicago\\ }
\textsc{851 South Morgan Street\\ }
\textsc{Chicago, IL  60680-7045\\ }
\textsc{USA\\}
{\em e-mail}: {\tt gtaylo9@uic.edu}\\
\\
\\
\textsc{Department of Mathematics \\}
\textsc{College of William and Mary \\}
\textsc{P. O. Box 8795 \\}
\textsc{Williamsburg, VA  23187-8795 \\}
\textsc{USA\\}
{\em e-mail}:  {\tt vinroot@math.wm.edu}\\
\end{document}